\newtheorem{theorem}{Theorem}[section]
\newtheorem{proposition}[theorem]{Proposition}
\newtheorem{lemma}[theorem]{Lemma}
\newtheorem{corollary}[theorem]{Corollary}
\newtheorem{definition}{Definition}[section]
\numberwithin{equation}{section}
\numberwithin{equation}{section}
\DeclareMathOperator{\spt}{spt}
\begin{document}
\title[Optimal controlled transports subject to import/export tariffs]
{Optimal controlled transports with free end times subject to import/export tariffs}
\author[S. Dweik, N. Ghoussoub and  A. Z. Palmer]{Samer Dweik, Nassif Ghoussoub and Aaron Zeff Palmer}
\address{Department of Mathematics, University of British Columbia, Vancouver BC Canada V6T 1Z2.}
\email{dweik@math.ubc.ca, nassif@math.ubc.ca, azp@math.ubc.ca}

\begin{abstract} 
We analyze controlled mass transportation plans with free end-time that minimize the transport cost induced by the generating function of a Lagrangian within a bounded domain, in addition to costs incurred 
as export and import tariffs at entry and exit points on the boundary.  
We exhibit a dual variational principle \`a la Kantorovich, that takes into consideration the additional tariffs. We then show that the primal optimal transport problem has an equivalent Eulerian formulation whose dual involves the resolution of a
 Hamilton-Jacobi-Bellman quasi-variational inequality with non-homogeneous boundary conditions. This allows us to prove existence and to describe the solutions
 for both the primal optimization problem and its Eulerian counterpart.  
\end{abstract}
\maketitle

\tableofcontents

 \section{Introduction} \label{1}
 
 Let $\mu^+$ be a positive measure on a bounded domain $\Omega$ of ${\mathbb R}^d$ that encodes both the location and the supply of goods produced by certain factories, and let $\mu^-$ be another positive measure on $\Omega$ that represents the location  of some customers as well as their consumption requirement for these goods. Assuming a function $c(x,y)$ describes the transport cost of a mass unit $x$ to $y$, and subject to the mass balance condition $\mu^+( \Omega)= \mu^-(\Omega),$ standard Monge-Kantorovich theory formulates the least costly transport plan as a solution for the following optimization problem,
 \begin{equation} \label{Kantorovich classique}
{\mathcal T}(\mu^+, \mu^-):= \inf\bigg\{\int_{\Omega \times \Omega} c(x,y)\,\mathrm{d}\pi(x,y) \,:\,\pi \in \mathcal{P}(\mu^+,\mu^-)\bigg\},
 \end{equation}
 where 
 $$\mathcal{P}(\mu^+,\mu^-):=\bigg\{\pi \in \mathcal{M}^+(\Omega \times \Omega)\,:\, \pi_x=\mu^+ \,\,\;\mbox{and}\;\,\,\pi_y=\mu^-\bigg\},$$
 and $\pi_x$ and $\pi_y$ are the two marginals of $\pi$ on $\Omega$. Note that if $c$ is continuous, Problem \eqref{Kantorovich classique} is the relaxed Kantorovich version of the so-called Monge Problem \cite{Monge}, 
  \begin{equation*}
{\mathcal T}(\mu^+, \mu^-)= \inf\bigg\{\int_{\Omega} c(x,T(x))\,\mathrm{d}\mu^+(x) \,:\,T_{\#}\mu^+=\mu^-\bigg\}, 
 \end{equation*}
 where $T_{\#}\mu^+=\mu^-$ means that the transformation $T$ pushes $\mu^+$ onto $\mu^-$, i.e., $\mu^-(A)=\mu^+(T^{-1}(A))$ for every Borel subset $A$. Under mild conditions on the transport cost $c$, Problem \eqref{Kantorovich classique} has the following dual formulation
 (see, for instance, \cite{8,11}):
\begin{equation} \label{dual classique}
\mathcal{D}(\mu^+,\mu^-):=\sup\bigg\{\int_\Omega \phi^-\,\mathrm{d}\mu^- - \int_\Omega \phi^+\,\mathrm{d}\mu^+\,\,:\,\,\phi^\pm \in C(\Omega),\,(- \phi^+) \oplus \phi^- \leq c\bigg\}.
\end{equation}
We refer to \cite{Ambrosio,Brenier1,Brenier2,  
8,11} for an introduction to optimal transport theory, its history, and its main results. 
 
 Our goal in this paper is to study a variant of this problem already considered by several authors \cite{Malusa,DweSan,Dweik,Dweik7,7}, namely the case where one considers transport plans that move all the products in $\mu^+$ and cover all the needs of the consumers in $\mu^-$, with the possibility of importing and exporting products across the boundary of $\Omega$, provided export (respect import) tariffs are paid  in addition to the transport cost: one is then charged an extra cost $-\,\psi^-(y)$ for each unit that comes out from a point $y \in \partial\Omega$ (the export tax) and a tariff $+ \,\psi^+(x)$ for each unit that enters at the point $x \in \partial\Omega$ (the import tax).  Note that the usual balance condition on $\mu^+, \mu^-$ is not imposed here since we can import and export through the boundary at will, if necessary. This means that $\partial\Omega$ can be considered an infinite reserve/repository from which one can import as much product as need be, and to which one can export as much mass as necessary, provided that one pays the import/export taxes in addition to the transportation cost. To formulate the problem, one considers 
 the set
$$\mathcal{P}_0(\mu^+,\mu^-):=\bigg\{\pi \in \mathcal{M}^+(\Omega\times\Omega)\,:\,(\pi_x)_{\,|\accentset{\circ}{\Omega}}=\mu^+,\,(\pi_y)_{\,|\accentset{\circ}{\Omega}} =\mu^-\bigg\},$$
and minimizes the quantity
{\small 
 \begin{equation} \label{Kantorovich with boundary costs}
{\mathcal T}_1(\mu^\pm; \psi^\pm):=\inf\bigg\{\int_{\Omega \times \Omega}c\,\mathrm{d}\pi
+\int_{\partial\Omega}\psi^+\,\mathrm{d}\pi_x
  -\int_{\partial\Omega}\psi^-\,\mathrm{d}\pi_y 
   \,:\,\pi \in \mathcal{P}_0(\mu^+,\mu^-) \bigg\}.
   \end{equation}
   }
   \hspace{-0.5em} It is important to assume the following ``no arbitrage condition," which makes sure that there is no advantage to transporting goods from the boundary to other boundary locations:
	\begin{equation}\label{g_1g_2}
		\psi^-(y)-\psi^+(x)\leq c(x,y) \ \,\,\,\hbox{for all \,\,$(x,y)\in \partial \Omega \times \partial \Omega$}.
	\end{equation}
Just like in classical Monge-Kantorovich theory, Problem \eqref{Kantorovich with boundary costs} has then a dual formulation where the Kantorovich potentials now satisfy certain   
boundary conditions. More precisely, one can show that 
\begin{equation}\label{weakduality1}
{\mathcal T}_1(\mu^\pm; \psi^\pm)={\mathcal D}_1(\mu^\pm; \psi^\pm),
\end{equation}
where
{\small 
\begin{equation}   \label{dual with boundary costs}
{\mathcal D}_1(\mu^\pm; \psi^\pm):=   \sup_{\varphi^\pm \in C(\Omega)}\left\{\int_\Omega \varphi^- \mathrm{d} \mu^- - \int_\Omega \varphi^+ \mathrm{d}\mu^+
: \begin{array}{l}\psi^- \leq \varphi^-\leq \varphi^+ \leq \psi^+\; \,\mbox{on}\;\, \partial\Omega,\\
(-\varphi^+) \oplus \varphi^- \leq c\end{array}\right\}.
\end{equation}
}
In this paper, we shall conisder transport costs $c(x,y)$ given by the minimal value of some optimal control problem between $x$ and $y$. Classical Monge-Kantorovich problems associated with such costs were considered by Agrachev and Lee \cite{A-L} for trajectories with fixed end-times, and by Ghoussoub-Kim-Palmer \cite{Aaron} for the case where end-times also need to be determined.  It is the latter set-up that we shall consider here, that is when 
 $$c(x,y)=\inf_{\tau,\,u}\bigg\{\int_0^\tau L(t,\gamma(t),u(t))\,\mathrm{d}t\,:\,\dot\gamma(t)=k(t,\gamma(t),u(t)),\,\gamma(0)=x,\,\gamma(\tau)=y\bigg\},$$
 where $L$ is a Lagrangian, $u$ is a set of controls and $k$ is a functional that determines the dynamics. This leads us to formulate an Eulerian version of the problem that dynamically describes the movement of goods. It also calls for finding optimal stopping times for their delivery.

To give an Eulerian formulation for the primal problem \eqref{Kantorovich with boundary costs}, we follow ideas in \cite{Aaron} and consider the set 
 $$\mathcal{E}_0(\mu^+,\mu^-):=\left\{(\rho,\eta)\,:\,\begin{array}{l} \rho:\mathbb{R}^+ \mapsto \mathcal{M}^+(\mathbb{R}^d\times U),\,\eta \in \mathcal{M}^+(\mathbb{R}^+ \times \mathbb{R}^d), \\(\rho,\eta) \,\,\mbox{solves}\,\,\eqref{stopping definitions}\end{array}\right\},$$
where the equations for $(\rho,\eta)$ are expressed formally as
\begin{equation}\label{stopping definitions}
\begin{cases}
\eta + \partial_t\bigg(\int_U \mathrm{d}\rho\bigg) + \nabla \cdot \bigg(\int_U k\,\mathrm{d}\rho\bigg)=0,\\
\rho_{0}(\cdot,U)=\mu^+\,\,\, \;\mbox{on}\,\, \; \mathbb{R}^d\backslash \partial\Omega,\\
\eta(\mathbb{R}^+,\cdot)=\mu^-\,\,\, \; \mbox{on}\,\, \; \mathbb{R}^d\backslash \partial\Omega.
\end{cases}
\end{equation}
This means that the boundary parts of $\rho_{0}(\cdot,U)$ and $\eta(\mathbb{R}^+,\cdot)$ are now unknown (in fact, these two positive measures represent the import/export masses on the boundary $\partial\Omega$). The Eulerian formulation of Problem \eqref{Kantorovich with boundary costs} then becomes 
\begin{equation}\label{Eulerian classique with boundary}
{\mathcal T}_2(\mu^\pm; \psi^\pm):=\inf\limits_{(\rho,\eta) \in \mathcal{E}_0(\mu^+,\mu^-)}\left\{\begin{array}{l}\displaystyle\int_{\mathbb{R}^+}\displaystyle\int_{\mathbb{R}^d \times U} L(t,x,u)\,\mathrm{d}\rho_t(x,u)\,\mathrm{d}t\\
 \qquad\,\,\,\,+\,\,\, \displaystyle\int_{\partial\Omega\times U} \psi^+(x) \,\mathrm{d}\rho_0(x,u)\\
 \qquad\qquad\,\,\,\,\,\, -\,\,\, \displaystyle\int_{\mathbb{R}^+\times \partial\Omega} \psi^-(y)\,\mathrm{d}\eta(t,y)\end{array}\right\}.
\end{equation}
Similarly to \cite{Aaron}, we then consider the dual of the Eulerian formulation \eqref{Eulerian classique with boundary},  which  involves the resolution of the following Hamilton-Jacobi-Bellman quasi-variational inequality but, now, with certain non-homogeneous boundary conditions:
\begin{equation}\label{HJB with boundary}
\begin{cases}
\partial_t J^+(t,x) + k(t,x,u) \cdot \nabla J^+(t,x) \leq L(t,x,u)\,\,\,&\ (t,x,u) \in \mathbb{R}^+ \times \mathbb{R}^d \times U,\\
\varphi^- (x) \leq J^+(t,x) \,\,\, &\ (t,x) \in \mathbb{R}^+ \times \mathbb{R}^d,\\
\psi^-(x) \leq \varphi^- (x) \,\,\, &\ x \in \partial\Omega,\\
J^+(0,x) \leq \psi^+(x) \,\,\, &\ x \in \partial\Omega.
\end{cases}
\end{equation}\\
  The second dual problem is now, 
{\small
\begin{equation} \label{dual Eulerian with boundary}
{\mathcal D}_2(\mu^\pm; \psi^\pm):=\sup\bigg\{\int_\Omega \varphi^-\mathrm{d}\mu^- - \int_\Omega J^+(0,\cdot)\,\mathrm{d}\mu^+: 
\begin{array}{l}\varphi^- \in C(\mathbb{R}^d),\,J^+ \in C^1(\mathbb{R}^+ \times \mathbb{R}^d),\\
(J^+,\varphi^-)\;\,\mbox{solves}\,\;\eqref{HJB with boundary} \end{array}\bigg\}.
\end{equation}
}\\
The first goal of this paper is to prove that --under natural conditions-- the following equalities hold, 
\begin{equation}
{\mathcal T}_1(\mu^\pm; \psi^\pm)={\mathcal T}_2(\mu^\pm; \psi^\pm)={\mathcal D}_1(\mu^\pm; \psi^\pm)={\mathcal D}_2(\mu^\pm; \psi^\pm).
\end{equation}
We will then show, under additional hypotheses, that minimizers of ${\mathcal T}_1(\mu^\pm; \psi^\pm)$ and ${\mathcal T}_2(\mu^\pm; \psi^\pm)$ are given by transport maps, determined by a Hamiltonian flow terminating along the free boundary of the optimal dual potentials.

We note that the above model is a particular case of a more general setting, where we can consider the reserve mass is taken from a prescribed set $K^+$ with cost $\psi^+$, and can be deposited in the set $K^-$ with cost $-\psi^-$, where $K^+$ and $K^-$ are two compact sets of $\mathbb{R}^d$. The admissible set of transport plans is then
	\begin{equation}
		\mathcal{P}_K(\mu^+,\mu^-)=\bigg\{\pi \in \mathcal{M}^+(\mathbb{R}^d\times \mathbb{R}^d): (\pi_x)_{|\mathbb{R}^d\backslash K^+}=\mu^+,\ (\pi_y)_{|\mathbb{R}^d\backslash K^-}=\mu^-\bigg\}, 
	\end{equation}
	and the new variational problem becomes 
	\begin{equation}
		\min\bigg\{\int_{\mathbb{R}^d\times \mathbb{R}^d} c(x,y)d\pi+\int_{K^+}\psi^+d\pi_x-\int_{K^-}\psi^- d\pi_y : \pi\in \mathcal{P}_K(\mu^+,\mu^-)\bigg\}.
	\end{equation}
	Again, we assume that the costs $\psi^+$ and $\psi^-$ satisfy the no arbitrage assumption (\ref{g_1g_2}), which becomes
	\begin{equation}
		\psi^-(y)-\psi^+(x)\leq c(x,y),\ \hbox{for all $(x,y)\in K^+ \times K^-$}.
	\end{equation}
	The same analysis as above can be carried out and a sketch is given in Section \ref{Sec.6}. Note that the above model is the particular case where 
	$K^+=K^-=\partial \Omega$.\\

  Finally, in Section \ref{Sec.7}, a one-dimensional example is presented to demonstrate how the structure of the problem can be utilized in a solution.\\

This paper is organized as follows. In Section \ref{Sec. 2}, we introduce the control problem needed to define the transportation cost and the conditions under which the existence of optimal trajectories and the continuity of the transport cost are guaranteed. In Section \ref{Sec. 3}, we analyze in details the primal transportation problem with  tariff costs, prove existence of an optimal transport plan and give a proof for the first duality principle. In Section \ref{Sec. 4}, we introduce an equivalent Eulerian formulation and its dual, establish the existence of an optimal admissible pair and show the equivalence with the primal problem. In Section \ref{Sec.5} we identify the optimal stopping times, while in Section \ref{Sec.6} we sketch a proof of the more general setting where the tariff costs $\psi^+$ (resp., $-\psi^-$) are incurred when goods are taken from (resp., deposited in) prescribed locations $K^+$ (resp., $K^-$). Section \ref{Sec.7} presents a simple one-dimensional example.

\section{Free end-time optimal control problem: preliminaries} \label{Sec. 2}
In this section, we consider the optimal control problem that we will use to define the transport cost $c(x,y)$, between two points $x$ and $y$.  
We assume that the trajectory from a point $x$ to another one $y$ is submitted to a non-autonomous control system, the time-dependence of the dynamic comes, for instance, from the interaction between particles. This control problem is said to be with \,{\it{free end time}}\, since the terminal time of the trajectories from $x$ to $y$ is not fixed, but is the first time at which they reach the point $y$. 
So, we consider control systems whose state equation is of the form
\begin{equation} \label{control system}
\left\{
\begin{aligned}
\dot{\gamma}(t) & = k(t,\gamma(t),u(t)), \,\,\,\text{for a.e.\ }t \geq t_0, \\
\gamma(t_0) & = x,
\end{aligned}
\right.
\end{equation}
where $\gamma(t) \in \mathbb{R}^d$ is the state, the continuous function $k: \mathbb{R}^+ \times \mathbb{R}^d \times \mathbb{R}^d \to \mathbb{R}^d$ is called the \emph{dynamic} of the system, $t_0 \in \mathbb{R}^+,\,x \in \mathbb{R}^d$, and $u: \left[t_0,+\infty \right[ \mapsto U$ is a measurable function (which is called a \emph{control} and $U$ is the {\it{control set}}).
We list some basic assumptions on the dynamic $k$ and the control set $U$:\smallskip

(H0) \quad The control set $U$ is compact,
 
(H1)  \quad  $k$ is bounded, i.e., $|k(t,x,u)| \leq \kappa\,\,\mbox{for all}\,\,(t,x,u) \in \mathbb{R}^+ \times \mathbb{R}^d \times U$,
 
(H2)  \quad  $k$ is continuous with respect to $u$ and is $C$-Lipschitz in the other variables,\\ 
\hspace*{4em} i.e., $\exists \,C > 0 \,\text{ such that }|k(t,x,u) - k(s,x^\prime,u)| \leq C(|t-s| +| x - x^\prime|)\\
\hspace*{4em}\,\,\text{for all }\,t,\,s \in \mathbb{R}^+,\,x,\,x^\prime \in \mathbb{R}^d\,\text{ and } u \in U$.\\

Note that Assumption (H2) ensures the existence of a unique global solution to the state equation \eqref{control system} for any choice of $t_0,\,x$ and $u$. We shall denote such a solution of \eqref{control system} by $\gamma^{t_0,x}_u$ and we call it an {\it{admissible trajectory}} of the system, corresponding to the initial condition $\gamma(t_0)=x$ and to the control $u$.
For a given trajectory $\gamma=\gamma^{t_0,x}_u$ of \eqref{control system}, we set
$$\tau^{t_0,x,y}_u=\inf\{\tau \geq 0 : \gamma^{t_0,x}_u(t_0 +\tau)=y\},\,\,\mbox{for every}\,\,y \in \mathbb{R}^d,$$
with the convention that $\tau^{t_0,x,y}_u=+\infty$\, if \,$\gamma^{t_0,x}_u(t_0 +\tau) \neq y$, for all \,$\tau \geq 0$. 

Now, our optimal control problem consists of choosing the control strategy $u$ in the state equation \eqref{control system} in order to minimize a given cost given by a Lagrangian. For that, consider  $L: \mathbb{R}^+ \times \mathbb{R}^d \times \mathbb{R}^d \mapsto \mathbb{R}^+$ to be a given continuous function. For every $x,\,y \in \mathbb{R}^d$, we minimize the cost
\begin{equation} \label{controlquantity}
J^{t_0,x,y}(u)=\int_{t_0}^{\,t_0 + \tau_u^{t_0,x,y}}L(t,\gamma_u^{t_0,x}(t),u(t))\,\mathrm{d}t,
\end{equation}
among all controls $u$. A control $u$ and the corresponding trajectory $\gamma_u^{t_0,x}$ are called {\it{optimal}}\, from \,$x$\, to $y$\, if \,$u$ minimizes \eqref{controlquantity}. Note that  it is not clear if for every $x,\,y \in \mathbb{R}^d$, there is always at least one admissible trajectory joining them. To avoid this situation, we assume the following extra condition (see also \cite{Cannarsa}):\smallskip

(H3)\quad The convex hull of $k(t,x,U)$ contains an open neighborhood of the origin,\\
\hspace*{4em} i.e., $\exists \,\alpha >0\,\,\,\mbox{such that}\, \,\forall\, x \in \mathbb{R}^d,\,\exists\,u \in U:  k(t,x,u) \cdot v \leq -\alpha|v|$\\
\hspace*{4em} $\mbox{for all}\,v \in \mathbb{R}^d,\,t\in\mathbb{R}^+.$
\begin{lemma} \label{admissible trajectory}
Let \,$\Omega \subset \mathbb{R}^d$ be a compact domain. Then, there exists a constant $C$ depending only on $\alpha,\kappa$ and $\mbox{diam}(\Omega)$ such that, for all $t_0\in \mathbb{R}^+,\,x,\,y \in \Omega$, there is some control $u$ such that
$\tau_u^{t_0,x,y} \leq C |x-y|.$
\end{lemma}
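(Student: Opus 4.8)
The plan is to build a control under which the distance $|\gamma(t)-y|$ from the running state to $y$ decreases at the uniform rate $\alpha$, so that $y$ is reached within time $|x-y|/\alpha$. We may assume $x\neq y$, since otherwise $\tau^{t_0,x,y}_u=0$. We shall use (H3) in the form: for every $(t,z)\in\mathbb{R}^+\times\mathbb{R}^d$ and every nonzero $v\in\mathbb{R}^d$ there is $u\in U$ with $k(t,z,u)\cdot v\le-\alpha|v|$. Indeed, writing $e=v/|v|$, the point $-\alpha e$ lies in $\operatorname{conv}k(t,z,U)$, hence is a convex combination of vectors $k(t,z,u_i)$, whence $\min_{u\in U}k(t,z,u)\cdot e\le-\alpha$, the minimum being attained because $U$ is compact and $k$ is continuous in $u$. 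Heuristically, if we could use the feedback that, while $\gamma(t)\neq y$, picks $u(t)$ with $k(t,\gamma(t),u(t))\cdot(\gamma(t)-y)\le-\alpha|\gamma(t)-y|$, then $\tfrac{\mathrm{d}}{\mathrm{d}t}\tfrac12|\gamma(t)-y|^2=k(t,\gamma(t),u(t))\cdot(\gamma(t)-y)\le-\alpha|\gamma(t)-y|$, so $\tfrac{\mathrm{d}}{\mathrm{d}t}|\gamma(t)-y|\le-\alpha$ and $|\gamma(t)-y|\le|x-y|-\alpha(t-t_0)$; thus $\gamma$ reaches $y$ no later than $t_0+|x-y|/\alpha$, which already shows that the bound should hold with $C=1/\alpha$.

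To make this rigorous one must avoid the (discontinuous, non-Carath\'eodory) feedback. I would discretize time: fix $h>0$, set $t_j=t_0+jh$, and define a \emph{piecewise constant} control $u^h$ by $u^h\equiv u_j$ on $[t_j,t_{j+1})$, where $u_j\in U$ is chosen via (H3) at the frozen data $(t_j,\gamma^h(t_j))$ so that $k(t_j,\gamma^h(t_j),u_j)\cdot(\gamma^h(t_j)-y)\le-\alpha|\gamma^h(t_j)-y|$. Since $u^h$ is a function of $t$ alone, (H2) produces a unique admissible trajectory $\gamma^h=\gamma^{t_0,x}_{u^h}$, and (H1) gives $|\gamma^h(t)-\gamma^h(t_j)|\le\kappa h$ on $[t_j,t_{j+1}]$. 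Replacing $k(t,\gamma^h(t),u_j)$ by $k(t_j,\gamma^h(t_j),u_j)$ on each subinterval, with the error controlled by the Lipschitz bound in (H2) and the displacement bound from (H1), one obtains $\tfrac{\mathrm{d}}{\mathrm{d}t}\tfrac12|\gamma^h(t)-y|^2\le-\alpha|\gamma^h(t)-y|+\omega(h)$, where $\omega(h)\to0$; since all states stay in a fixed bounded set up to time $t_0+C|x-y|$ (this is the only place $\operatorname{diam}(\Omega)$ and $\kappa$ are used), $\omega(h)$ is uniform over $t_0$ and $x,y\in\Omega$. Integrating shows that $\gamma^h$ enters any prescribed neighborhood of $y$ within time $(1+o(1))|x-y|/\alpha$ as $h\to0$.

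It remains to let $h\to0$. By (H1) the curves $\gamma^h$ are uniformly $\kappa$-Lipschitz, so by Arzel\`a--Ascoli a subsequence converges uniformly on compacts to some $\gamma$, and passing to the limit in the integrated inequality gives $|\gamma(t)-y|\le|x-y|-\alpha(t-t_0)$ while $\gamma(t)\neq y$; hence $\gamma(t_0+\tau)=y$ for some $\tau\le|x-y|/\alpha$. The \emph{main obstacle} is to check that the limit $\gamma$ is itself an admissible trajectory of \eqref{control system}, i.e.\ $\dot\gamma(t)=k(t,\gamma(t),u(t))$ for a measurable $u:[t_0,+\infty)\to U$. This is the standard closure theorem for the control system under (H0)--(H2): the velocity sets $k(t,x,U)$ are compact, $\dot\gamma^h\rightharpoonup\dot\gamma$ weakly, so $\dot\gamma(t)\in\overline{\operatorname{conv}}\,k(t,\gamma(t),U)$ a.e., and Filippov's implicit-function lemma then yields the required measurable selection $u$ (when the profiles $k(t,x,U)$ fail to be convex one first passes to the relaxed system, whose trajectories are uniform limits of ordinary ones, as in \cite{Cannarsa}). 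This gives a control with $\tau^{t_0,x,y}_u\le C|x-y|$, and one may take $C=1/\alpha$. Alternatively, the closure theorem can be bypassed by a self-contained construction in which the time-step is not fixed but shrinks with the current distance, $h_j\sim\min(|\gamma(t_j)-y|/\kappa,\,\mathrm{const})$, re-aiming at $y$ at every node; balancing the per-step error against the $-\alpha|\gamma(t_j)-y|$ gain forces the distances to decay geometrically once small, with summable step-lengths, so that a single piecewise constant control drives the state exactly to $y$ in time $O(|x-y|)$, at the cost of a larger constant $C=C(\alpha,\kappa,\operatorname{diam}\Omega)$.
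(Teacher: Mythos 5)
Your first route (fixed step $h$, Arzel\`a--Ascoli, then a closure theorem) has a genuine gap exactly at the point you flag as the ``main obstacle.'' Under the hypotheses in force for this lemma --- (H0)--(H3) only, and indeed the constant is claimed to depend only on $\alpha,\kappa,\mathrm{diam}(\Omega)$ --- the velocity sets $k(t,x,U)$ need not be convex, so the weak limit of $\dot\gamma^h$ only gives $\dot\gamma(t)\in\overline{\operatorname{conv}}\,k(t,\gamma(t),U)$, i.e.\ $\gamma$ is a trajectory of the \emph{relaxed} system, not necessarily an admissible trajectory of \eqref{control system}. The remedy you propose (pass to the relaxed system and approximate its trajectories uniformly by ordinary ones) does not close the gap for this particular statement: the quantity $\tau^{t_0,x,y}_u$ is the hitting time of the point $y$ \emph{exactly}, and a uniform approximation of a relaxed trajectory only lands an ordinary trajectory in a small neighborhood of $y$; to then reach $y$ itself you must re-aim and iterate, which is no longer the fixed-step argument but precisely the second construction you sketch. (If one is willing to invoke (H6), which by projection makes $k(t,x,U)$ convex, the Filippov selection argument does go through and even yields the sharper constant $C=1/\alpha$; but that is a stronger hypothesis than the lemma uses.)

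Your ``alternative'' at the end --- choose the step proportional to the current distance, re-aim at $y$ at each node, and balance the per-step error from (H1)--(H2) against the gain $-\alpha|x_k-y|$ to force geometric decay $|x_k-y|\le\theta^k|x-y|$ with summable step lengths, so that a single piecewise constant control reaches $y$ exactly in time $O(|x-y|)$ --- is exactly the paper's proof (with $t_{k+1}=t_k+\delta|x_k-y|$ and $C=\delta/(1-\theta)$), and it is self-contained, requiring no convexity or compactness-of-trajectories argument. As written, though, you give it only as a one-sentence sketch; to have a complete proof you should carry out that estimate (the computation of $\tfrac{\mathrm{d}}{\mathrm{d}t}\tfrac12|\gamma(t)-y|^2$ on each subinterval, the choice of $\delta$ making $\theta<1$, and the verification that $\gamma(\bar t)=y$ with $\bar t-t_0\le\frac{\delta}{1-\theta}|x-y|$) rather than rely on the fixed-step limiting argument, which cannot stand on its own under (H0)--(H3).
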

\begin{proof}
Fix $x,\,y \in \Omega$. By (H3), there exists a constant control $u_0$ such that $k(t,x,u_0) \cdot (x-y) \leq -\alpha|x-y|$, for every $t \in \mathbb{R}^+$. Set $x_1=\gamma_{u_0}^{t_0,x}(t_1)$, where $t_1:=t_0 +\delta |x-y|$ and $\delta >0$ is to be chosen later. But again, there is some constant control $u_1$ such that $k(t,x_1,u_1) \cdot (x_1-y) \leq -\alpha|x_1-y|$, for every $t \in \mathbb{R}^+$. Set $x_2=\gamma_{u_1}^{t_1,x_1}(t_2)$, where $t_2:=t_1 + \delta|x_1-y|$. In this way, we get three sequences $(u_k)_k,\,(t_k)_k$ and $(x_k)_k$ such that:
$k(t,x_k,u_k) \cdot (x_k-y) \leq -\alpha|x_k-y|$, for every $t \in \mathbb{R}^+$, and $x_{k+1}=\gamma_{u_k}^{t_k,x_k}(t_{k+1})$, where $t_{k+1}=t_0 + \sum_{i=0}^k\delta |x_i-y|$. Using (H1) \& (H2), for every $t \in \left[t_k,t_{k+1}\right]$, we have:
\begin{eqnarray*}
(|\gamma_{u_k}^{t_k,x_k}(t) - y|^2/2)^\prime&=& k(t,\gamma_{u_k}^{t_k,x_k}(t),u_k) \cdot (\gamma_{u_k}^{t_k,x_k}(t) - y)\\
&=&k(t,x_k,u_k) \cdot (x_k - y) + k(t,\gamma_{u_k}^{t_k,x_k}(t),u_k) \cdot (\gamma_{u_k}^{t_k,x_k}(t) - x_k)\\
&& \qquad\qquad+\,\,\, (k(t,\gamma_{u_k}^{t_k,x_k}(t),u_k) - k(t,x_k,u_k)) \cdot (x_k - y)\\
&\leq& -\alpha |x_k - y| + \kappa |\gamma_{u_k}^{t_k,x_k}(t) - x_k| + C|\gamma_{u_k}^{t_k,x_k}(t) - x_k| |x_k - y|\\
&\leq& -\alpha |x_k - y| + \kappa(\kappa + C|x_k - y|)(t_{k+1} - t_k)\\
&=& (-\alpha  + \delta \kappa (\kappa + C|x_k - y|))|x_k - y|.
\end{eqnarray*}
 Then, we get
$$|x_{k+1} - y|^2 \leq (1 + 2\delta(-\alpha +\delta\kappa(\kappa + C\,\mbox{diam}(\Omega))))|x_k - y|^2.$$
Set 
$$\theta:=\sqrt{1 + 2\delta(-\alpha +\delta\kappa(\kappa + C\,\mbox{diam}(\Omega)))} < 1.$$\\
We then have $|{x_k} - y| \leq \theta^k |x - y|.$
Now, define
$$\bar{u}(t):=\begin{cases}
u_k \,\,\,\,\,\mbox{if}\,\,\,\,\,t \in \left[t_k,t_{k+1}\right],\,k \in \mathbb{N},\\
0 \,\,\,\,\,\,\,\mbox{else}.
\end{cases}$$
Observe that
$$\bar{t}:=\lim_{k \to +\infty} t_k=t_0 +\lim_{k \to +\infty}  \sum_{i=0}^{k-1} \delta |x_i -y| \leq t_0 +\lim_{k \to +\infty} \sum_{i=0}^{k-1} \delta \theta^i |x -y|=t_0 + \frac{\delta}{1- \theta}\,|x-y|.$$
On the other hand, we see easily that $\gamma^{t_0,x}_{\bar{u}}(\bar{t})=\lim_{k \to \infty} \gamma^{t_0,x}_{\bar{u}}(t_k)=\lim_{k \to \infty} x_k=y$. Consequently, we get
$$\tau^{t_0,x,y}_{\bar{u}} \leq \frac{\delta}{1- \theta}\,|x-y|,$$
and we are done. $\qedhere$
\end{proof}
We now introduce assumptions on the Lagrangian $L$ that will be needed in the sequel:\smallskip

(H4) \quad There exist two constants $\beta_1,\,\beta_2 >0$ such that 
$$\beta_1 \geq L(t,x,u) \geq \beta_2,\,\,\mbox{for all}\,\,(t,x,u) \in \mathbb{R}^+ \times \mathbb{R}^d \times U.
$$

(H5) \quad There exists a constant $C$ such that 
$$|L(t,x,u) - L(t,x^\prime,u)| \leq C|x - x^\prime|,\,\, \hbox{for all $x,\,x^\prime \in \mathbb{R}^d,\,t \in \mathbb{R}^+$ and $u \in U$}.
$$

(H6) \quad For any $(t,x) \in \mathbb{R}^+ \times \mathbb{R}^d$, the set 
$$\{ (v,r) \in \mathbb{R}^{d+1} \,:\, \exists \,u \in U \,\,\mbox{s.t.}\,\,k(t,x,u)=v,\, L(t,x,u) \leq r\} \hbox{ is convex}.
$$ 
Under assumptions (H0), (H1), (H2), (H3), (H4) and (H6), we have the following existence result. The proof is essentially based on some arguments used in \cite{Cannarsa}.

\begin{proposition} \label{PropExistOptim}
For every $t_0 \in \mathbb R^+$ and for all $x,\,y \in \Omega$, there exists an optimal control $u$ that minimizes the cost $J^{t_0, x, y}$ defined in \eqref{controlquantity}.
\end{proposition}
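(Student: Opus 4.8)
The plan is to run the direct method of the calculus of variations, with a Filippov--Cesari closure argument (whose hypotheses are precisely (H0)--(H2), (H4), (H6)) playing the role of lower semicontinuity. Throughout I fix $t_0\in\mathbb{R}^+$ and $x,y\in\Omega$; the case $x=y$ is trivial, so assume $x\neq y$.

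\textbf{A minimizing sequence with bounded horizon.} By Lemma~\ref{admissible trajectory} there is at least one admissible control $u$ steering $x$ to $y$ with $\tau_u^{t_0,x,y}\le C|x-y|$, and then (H4) gives $J^{t_0,x,y}(u)\le \beta_1 C|x-y|$, so $I:=\inf_u J^{t_0,x,y}(u)<+\infty$. Choose a minimizing sequence $(u_n)_n$ with $J^{t_0,x,y}(u_n)\le I+1$; since $L\ge\beta_2>0$ by (H4), the first hitting times $\tau_n:=\tau_{u_n}^{t_0,x,y}$ satisfy $\tau_n\le (I+1)/\beta_2=:T$, so all the relevant dynamics take place on the fixed interval $[t_0,t_0+T]$, and along a subsequence $\tau_n\to\tau^\ast\in[0,T]$. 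Writing $\gamma_n:=\gamma_{u_n}^{t_0,x}$ and $a_n(t):=\int_{t_0}^t L(s,\gamma_n(s),u_n(s))\,\mathrm{d}s$, assumption (H1) makes the $\gamma_n$ uniformly $\kappa$-Lipschitz on $[t_0,t_0+T]$ and contained in the compact ball $\overline B(x,\kappa T)$, while (H4) makes the $a_n$ $\beta_1$-Lipschitz with $\beta_2\le\dot a_n\le\beta_1$. By Arzel\`a--Ascoli, passing to a further subsequence, $\gamma_n\to\gamma^\ast$ and $a_n\to a^\ast$ uniformly, with $\gamma^\ast,a^\ast$ Lipschitz, $\gamma^\ast(t_0)=x$, $a^\ast(t_0)=0$; uniform convergence of $\gamma_n$ together with $\tau_n\to\tau^\ast$ forces $\gamma^\ast(t_0+\tau^\ast)=y$. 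Finally $(\dot\gamma_n,\dot a_n)$ is bounded in $L^\infty([t_0,t_0+T];\mathbb{R}^{d+1})$ and converges weakly-$\ast$ to $(\dot\gamma^\ast,\dot a^\ast)$.

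\textbf{Closure via (H6) and recovery of a control.} For $(t,z)\in\mathbb{R}^+\times\mathbb{R}^d$ set
$$F(t,z):=\{(v,r)\in\mathbb{R}^{d+1}:\ \exists\,u\in U,\ k(t,z,u)=v,\ L(t,z,u)\le r\},$$
which is convex by (H6); by (H0)--(H2), (H4) and the continuity of $k,L$, the multifunction $z\mapsto F(t,z)$ has closed graph with locally bounded values. Since $(\dot\gamma_n(t),\dot a_n(t))\in F(t,\gamma_n(t))$ for a.e.\ $t$, $\gamma_n\to\gamma^\ast$ uniformly, and each $F(t,\cdot)$ is closed and convex, the standard closure lemma (Mazur's lemma combined with the closed-graph property, along the lines of \cite{Cannarsa}) yields $(\dot\gamma^\ast(t),\dot a^\ast(t))\in F(t,\gamma^\ast(t))$ for a.e.\ $t$. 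A measurable selection argument (Filippov's lemma) then produces a measurable control $u^\ast:[t_0,t_0+T]\to U$ with $k(t,\gamma^\ast(t),u^\ast(t))=\dot\gamma^\ast(t)$ and $L(t,\gamma^\ast(t),u^\ast(t))\le\dot a^\ast(t)$ for a.e.\ $t$; by the uniqueness of solutions to \eqref{control system} guaranteed by (H2), it follows that $\gamma^\ast=\gamma_{u^\ast}^{t_0,x}$.

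\textbf{Conclusion and main obstacle.} Since $\gamma^\ast$ reaches $y$ at time $t_0+\tau^\ast$, the first hitting time of $u^\ast$ satisfies $\tau_{u^\ast}^{t_0,x,y}\le\tau^\ast$; combining this with $L\ge0$, with $L(t,\gamma^\ast,u^\ast)\le\dot a^\ast$, and with $a_n(t_0+\tau_n)=J^{t_0,x,y}(u_n)\to I$ together with $a_n(t_0+\tau_n)\to a^\ast(t_0+\tau^\ast)$ (uniform convergence plus equi-Lipschitz bounds and $\tau_n\to\tau^\ast$), one gets $J^{t_0,x,y}(u^\ast)\le\int_{t_0}^{t_0+\tau^\ast}\dot a^\ast\,\mathrm{d}t=a^\ast(t_0+\tau^\ast)=I$, so $u^\ast$ is optimal. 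I expect the delicate point to be the closure step: without the convexity hypothesis (H6) the weak-$\ast$ limit $(\dot\gamma^\ast,\dot a^\ast)$ need not be realizable by a single control and the infimum may fail to be attained — (H6) is exactly what converts weak $L^\infty$-convergence into the pointwise inclusion in $F(t,\gamma^\ast(t))$. Everything else (finiteness via Lemma~\ref{admissible trajectory}, the uniform horizon bound coming from $L\ge\beta_2$, and Arzel\`a--Ascoli) is routine.
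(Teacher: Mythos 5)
Your proof is correct and takes essentially the same route as the paper's: a minimizing sequence whose stopping times are bounded via $L\ge\beta_2$, Arzel\`a--Ascoli compactness of the trajectories and of the running-cost integrals, a closure step driven by the convexity assumption (H6), and recovery of the limiting control by measurable selection. The only cosmetic difference is that you perform the closure in one step on the epigraph-type multifunction $F$ (Cesari/Filippov style), while the paper first shows the limit curve is admissible and then repeats the same convexity argument for the augmented system $\tilde k=(k,\beta_2\mathbf{u}+L(1-\mathbf{u}))$; the underlying idea is identical.
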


\begin{proof} 
Let $(u_k)_k$ be a minimizing sequence. For simplicity, set $\tau_k:=\tau^{t_0,x,y}_{u_k}$ and $\gamma_k:=\gamma^{t_0,x}_{u_k}$. By (H4), it is clear that, up to extracting subsequences, $\tau_k\rightarrow \bar{\tau}$ and $\gamma_k \rightarrow \gamma$ with $\gamma(t_0 +\bar{\tau})=y$. We first prove that $\gamma$ is an admissible trajectory. Fix $t \geq t_0$. By (H2), for all $s \geq t_0$, we have
\begin{eqnarray*}
|k(s,\gamma_k(s),u_k(s)) - k(t,\gamma(t),u_k(s))| &\leq& C(|s-t| +|\gamma_k(s) - \gamma(t)|)\\
&\leq& C(|s-t| + |\gamma_k(s) - \gamma_k(t)| + |\gamma_k(t) - \gamma(t)|)\\
&\leq& C(|s - t| + ||\gamma_k - \gamma||_\infty).
\end{eqnarray*}
Fix $\varepsilon >0$. For $|s-t| << \varepsilon$ and $k$ large enough, this implies that
$$k(s,\gamma_k(s),u_k(s)) \in k(t,\gamma(t),U) + \bar{B}(0,\varepsilon).$$
By using (H6), we get
$$\frac{\gamma_k(t+h) - \gamma_k(t)}{h}=\frac{1}{h} \int_t^{t+h} k(s,\gamma_k(s),u_k(s))\,\mathrm{d}s \,\,\in \,\,k(t,\gamma(t),U) +\bar{B}(0,\varepsilon).$$
Letting $k \to \infty$, we obtain
$$\frac{\gamma(t+h) - \gamma(t)}{h}\,\,\in \,\,k(t,\gamma(t),U) + \bar{B}(0,\varepsilon).$$
Letting $h \to 0$, we get
$$\dot\gamma(t)\,\,\in \,\,k(t,\gamma(t),U) + \bar{B}(0,\varepsilon),\,\,\mbox{for a.e.}\,\,t \geq t_0.$$
Since $\varepsilon >0$ is arbitrary, we infer that $\dot\gamma(t)\,\,\in \,\,k(t,\gamma(t),U),\,\,\mbox{for a.e.}\,\,t \geq t_0$. This is sufficient to say that $\gamma$ is an admissible trajectory, i.e. there is some control $u$ such that $\gamma=\gamma^{t_0,x}_u$.

 Now consider the augmented control system $(\tilde{U},\tilde{k})$ with state space $\mathbb{R}^{d+1}$, where $\tilde{U}=U \times \left[0,1\right]$ and $\tilde{k}(t,\tilde{x},\tilde{u})=(k(t,x,u),\beta_2 {\bf{u}} + L(t,x,u)(1-{\bf{u}}))$, where \,$\tilde{x}=(x,{\bf{x}}) \in \mathbb{R}^{d+1}$\, and \,$\tilde{u}=(u,{\bf{u}}) \in \tilde{U}$. Recalling (H6), one can check easily that the set $\tilde{k}(\tilde{x},\tilde{U})$ is convex, for all $\tilde{x} \in \mathbb{R}^{d+1}$. Let $\tilde{\gamma}_k$ be the trajectory which corresponds to initial condition $(x,0)$ at time $t_0$ and to the controls $(u_k,0)$. Then, we have $\tilde{\gamma}_k=(\gamma_k,{\bf{x}}_k)$, where 
$${\bf{x}}_k(t)=\int_{t_0}^t L(s,\gamma_k(s),u_k(s))\,\mathrm{d}s,\,\,\mbox{for every}\,\,\,t \geq t_0.$$
As \,$\tilde{k}(\tilde{x},\tilde{U})$\, is convex, for all $\tilde{x} \in \mathbb{R}^{d+1}$, then one can prove as before that there is a control $\tilde{u}=(u,{\bf{u}})$ such that $(\gamma_k,{\bf{x}}_k) \rightarrow (\gamma,{\bf{x}})$, where $(\gamma,{\bf{x}})$ is an admissible trajectory which corresponds to initial condition $(x,0)$ at time $t_0$ and to the control $\tilde{u}$. Yet, one has
$$\int_{t_0} ^t L(s,\gamma(s),u(s))\,\mathrm{d}s \leq  \int_{t_0} ^t {\bf{x}}^\prime(s)\,\mathrm{d}s={\bf{x}}(t)=\lim_{k \to \infty} {\bf{x}}_k (t)  =\lim_{k \to \infty}\int_{t_0}^t L(s,\gamma_k(s),u_k(s))\,\mathrm{d}s.$$
Recalling that $\tau^{t_0,x,y}_u \leq \bar{\tau}$ and $\tau_k \to \bar{\tau}$ and using (H4), we infer that
\begin{align*}
\int_{t_0} ^{t_0 +\tau^{t_0,x,y}_u} L(s,\gamma(s),u(s))\,\mathrm{d}s \leq&\ \int_{t_0} ^{t_0+\bar{\tau}} L(s,\gamma(s),u(s))\,\mathrm{d}s\\
 \leq&\  \lim_{k \to \infty}\int_{t_0}^{t_0+\bar{\tau}} L(s,\gamma_k(s),u_k(s))\,\mathrm{d}s\\
\leq&\  \lim_{k \to \infty}\int_{t_0}^{t_0+\tau_k} L(s,\gamma_k(s),u_k(s))\,\mathrm{d}s.
\end{align*}
This completes the proof that \,$\gamma$\, is an optimal trajectory from \,$x$\, to \,$y$ and  that $u$ is the corresponding optimal control. $\qedhere$
\end{proof}
Next, we want to give a result about the Lipschitz continuity of the transport cost. This result is standard but we prove it for completeness (see also \cite{Cannarsa} for similar results). Set
$$c(x,y):= \min\left\{J^{0,x,y}(u)\,;\,u:\mathbb{R}^+ \mapsto U\right\}.$$
\begin{proposition} \label{Lipschitz regularity of the transport cost}
Under Assumptions (H0)-(H6), the transport cost $c$ is Lipschitz continuous in $\Omega \times \Omega$.
\end{proposition}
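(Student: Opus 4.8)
The plan is to establish the two–sided bound $|c(x,y)-c(x',y')|\le K\bigl(|x-x'|+|y-y'|\bigr)$ for all $x,y,x',y'\in\Omega$ by a symmetric competitor construction: starting from an optimal control for the pair $(x,y)$, I will build an admissible control for $(x',y')$ whose cost exceeds $c(x,y)$ by at most $K\bigl(|x-x'|+|y-y'|\bigr)$, and then interchange the roles of the two pairs.

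\emph{A priori bounds.} Denote by $C_0$ the constant of Lemma~\ref{admissible trajectory}. For any $x,y\in\Omega$ that lemma produces a control joining $x$ to $y$ with hitting time at most $C_0|x-y|\le C_0\operatorname{diam}(\Omega)$, so (H4) gives $c(x,y)\le\beta_1 C_0\operatorname{diam}(\Omega)=:M$. Since $L\ge\beta_2$, any optimal control $u$ for $c(x,y)$ — which exists by Proposition~\ref{PropExistOptim} — has hitting time $\tau:=\tau^{0,x,y}_u\le M/\beta_2=:T$. It is this uniform time horizon $T$ that will make all the estimates below independent of the chosen points.

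\emph{Construction and estimate.} Fix $x,y,x',y'\in\Omega$ and take $u$, $\gamma:=\gamma^{0,x}_u$, $\tau\le T$ as above, so that $c(x,y)=\int_0^\tau L(t,\gamma(t),u(t))\,\mathrm{d}t$ and $\gamma(\tau)=y$. Running the same control $u$ from $x'$ gives $\gamma':=\gamma^{0,x'}_u$; a standard Gronwall argument using the spatial Lipschitz bound in (H2) (differentiating $|\gamma'-\gamma|^2$ as in the proof of Lemma~\ref{admissible trajectory}) yields $|\gamma'(t)-\gamma(t)|\le|x-x'|e^{Ct}\le|x-x'|e^{CT}$ on $[0,\tau]$, whence $|\gamma'(\tau)-y'|\le|x-x'|e^{CT}+|y-y'|$. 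Let $\Omega'$ be the closed $e^{CT}\operatorname{diam}(\Omega)$–neighborhood of $\Omega$, a fixed compact set containing both $\gamma'(\tau)$ and $y'$; applying Lemma~\ref{admissible trajectory} on $\Omega'$ with initial time $\tau$ produces a control $v$ on $[\tau,\infty)$ and a time $\tau_v\le C'\bigl(|x-x'|e^{CT}+|y-y'|\bigr)$ with $\gamma^{\tau,\gamma'(\tau)}_v(\tau+\tau_v)=y'$, where $C'$ depends only on $\alpha,\kappa$ and $\operatorname{diam}(\Omega')$. The concatenation $w:=u$ on $[0,\tau]$ and $w:=v$ on $(\tau,\infty)$ is an admissible control whose trajectory from $x'$ reaches $y'$ by time $\tau+\tau_v$, so $L\ge0$ gives
\[
c(x',y')\ \le\ \int_0^{\tau}L\bigl(t,\gamma'(t),u(t)\bigr)\,\mathrm{d}t\;+\;\int_{\tau}^{\tau+\tau_v}L\bigl(t,\gamma_w(t),v(t)\bigr)\,\mathrm{d}t.
\]
The last integral is $\le\beta_1\tau_v$ by (H4). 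In the first integral I write $L(t,\gamma'(t),u(t))=L(t,\gamma(t),u(t))+\bigl[L(t,\gamma'(t),u(t))-L(t,\gamma(t),u(t))\bigr]$: the first piece integrates to $c(x,y)$, and (H5) together with the Gronwall bound estimates the second piece by $C\int_0^\tau|\gamma'-\gamma|\le CTe^{CT}|x-x'|$. Collecting terms, $c(x',y')\le c(x,y)+K\bigl(|x-x'|+|y-y'|\bigr)$ with $K:=e^{CT}\bigl(CT+\beta_1 C'\bigr)$; swapping $(x,y)$ and $(x',y')$ gives the reverse inequality and finishes the proof.

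The only points that are not pure bookkeeping of the constants are, first, that Lemma~\ref{admissible trajectory} must be reapplied on a compact set $\Omega'$ strictly larger than $\Omega$, since the transplanted trajectory $\gamma'$ may leave $\Omega$; and second, that one should \emph{not} try to match the free end-time $\tau$ of the pair $(x,y)$ to the pair $(x',y')$, but rather keep $\tau$ and glue on a short corrective arc — this is legitimate precisely because the terminal time is free and $L\ge0$. If one prefers not to invoke Proposition~\ref{PropExistOptim}, the same argument runs verbatim with an $\varepsilon$-optimal control in place of $u$, letting $\varepsilon\to0$ at the end.
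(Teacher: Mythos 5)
Your proof is correct and follows essentially the same route as the paper's: reuse the optimal control for $(x,y)$, append a short corrective arc supplied by Lemma~\ref{admissible trajectory}, and estimate the extra cost via (H4), (H5) and a Gronwall bound. The only differences are refinements of presentation — you treat both arguments simultaneously and make explicit the uniform bound on the optimal end-time and the enlargement of the compact set needed for the corrective arc, two points the paper's proof leaves implicit.
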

\begin{proof}
Let $x,\,y,\,y^\prime \in \Omega$. Suppose that $c(x,y^\prime) > c(x,y)$ and let $u$ be an optimal control from $x$ to $y$ starting at time $0$. Set $\tau^\star=\tau^{0,x,y}_u$ and let $u^\star$ be a control such that $\tau^{\tau^\star,y,y^\prime}_{u^\star} \leq C |y - y^\prime|$ (see Lemma \ref{admissible trajectory}). Set 
$$\tilde{u}(t):=\begin{cases}
u(t) \,\,\,\,\,\,\,\,\,\mbox{if}\,\,\,\,t \in \left[0,\tau^\star\right],\\
u^\star(t)\,\, \,\,\,\mbox{else}.
\end{cases}$$
 Then, we have
\begin{eqnarray*}
c(x,y^\prime) - c(x,y) &\leq& J^{0,x,y^\prime}(\tilde{u}) - J^{0,x,y}(u)\\
&=& \int_{\tau^\star}^{\tau^\star +\tau^{\tau^\star,y,y^\prime}_{u^\star}} L(s,\gamma^{\tau^\star,y}_{u^\star}(s),u^\star(s))\,\mathrm{d}s\\
& \leq& C |y - y^\prime|.
\end{eqnarray*}
On the other hand, let $x,\,x^\prime,\,y \in \Omega$. Suppose that $c(x^\prime,y) > c(x,y)$ and let $u$ be an optimal control from $x$ to $y$, at time $0$. There are two possibilities: $\tau^{0,x,y}_u \leq \tau^{0,x^\prime,y}_u$ or $\tau^{0,x,y}_u > \tau^{0,x^\prime,y}_u$. First, assume that $\tau^\star:=\tau^{0,x,y}_u \leq \tau^{0,x^\prime,y}_u$. Set $y^\star=\gamma^{0,x^\prime}_u(\tau^\star)$
and let $u^\star$ be a control such that $\tau^{\tau^\star,y^\star,y}_{u^\star} \leq C |y - y^\star|$. Set 
$$\tilde{u}(t):=\begin{cases}
u(t) \,\,\,\,\,\,\,\,\,\mbox{if}\,\,\,\,t \in \left[0,\tau^\star\right],\\
u^\star(t)\,\, \,\,\,\mbox{else}.
\end{cases}$$
 Then, we have
\begin{eqnarray*}
c(x^\prime,y) - c(x,y) &\leq& J^{0,x^\prime,y}(\tilde{u}) - J^{0,x,y}(u)\\
&\leq& \int_0^{\tau^\star} (L(s,\gamma^{0,x^\prime}_{u}(s),u(s)) - L(s,\gamma^{0,x}_{u}(s),u(s)))\,\mathrm{d}s\\
&& \qquad\qquad+\,\,  \int_{\tau^\star}^{\tau^\star +\tau^{\tau^\star,y^\star,y}_{u^\star} } L(s,\gamma^{0,x^\prime}_{\tilde{u}}(s),\tilde{u}(s))\,\mathrm{d}s\\
& \leq & \int_0^{\tau^\star} |\gamma^{0,x^\prime}_{u}(s) - \gamma^{0,x}_{u}(s)|\,\mathrm{d}s + C |y - y^\star|.
\end{eqnarray*}
But, 
\begin{eqnarray*}
|\gamma^{0,x^\prime}_{u}(s) - \gamma^{0,x}_{u}(s)|&=&\bigg|\int_0^s k(t,\gamma^{0,x^\prime}_{u}(t),u(t))\,\mathrm{d}t + x^\prime - \int_0^s k(t,\gamma^{0,x}_{u}(t),u(t))\,\mathrm{d}t - x\bigg|\\
&\leq& |x-x^\prime| + C\int_0^s |\gamma^{0,x^\prime}_{u}(t) - \gamma^{0,x}_{u}(t)|\,\mathrm{d}t.
\end{eqnarray*}
Hence,
$|\gamma^{0,x^\prime}_{u}(s) - \gamma^{0,x}_{u}(s)| \leq C |x - x^\prime|,$
and consequently, 
$c(x^\prime,y) - c(x,y) \leq C |x - x^\prime|$. The case where $\tau^{0,x,y}_u > \tau^{0,x^\prime,y}_u$ can be treated in a similar fashion. $\qedhere$
\end{proof}
We also have the following stability-optimality result:
\begin{proposition} \label{stability-optimality}
Let $(x_k)_k$ and $(y_k)_k$ be two sequences such that $x_k \to x$ and $y_k \to y$. If $\gamma_k$ is an optimal trajectory from $x_k$ to $y_k$, for every $k \in \mathbb{N}$, then, up to a subsequence, $\gamma_k \rightarrow \gamma$, where $\gamma$ is an optimal trajectory from $x$ to $y$.
\end{proposition}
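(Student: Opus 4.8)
The plan is to combine an Arzel\`a--Ascoli compactness argument for the trajectories and the stopping times with the lower--semicontinuity technique already used in the proof of Proposition~\ref{PropExistOptim}. Write $\gamma_k=\gamma^{0,x_k}_{u_k}$ and $\tau_k=\tau^{0,x_k,y_k}_{u_k}$, where $u_k$ is an optimal control from $x_k$ to $y_k$. First I would bound the stopping times: since $x_k,y_k\in\Omega$, Lemma~\ref{admissible trajectory} provides an admissible control whose stopping time is at most $C|x_k-y_k|\le C\,\mathrm{diam}(\Omega)$, so by (H4) the optimality of $u_k$ forces $\beta_2\tau_k\le c(x_k,y_k)\le \beta_1 C\,\mathrm{diam}(\Omega)$; hence $(\tau_k)_k$ is bounded and, up to a subsequence, $\tau_k\to\bar\tau$. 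By (H1) the curves $\gamma_k$ are $\kappa$--Lipschitz with $\gamma_k(0)=x_k$, so they are equibounded and equicontinuous on any fixed interval $[0,T]$ with $T>\sup_k\tau_k$; by Arzel\`a--Ascoli, along a further subsequence $\gamma_k\to\gamma$ uniformly on $[0,T]$. Passing to the limit gives $\gamma(0)=x$, and from $|\gamma_k(\tau_k)-\gamma(\bar\tau)|\le\kappa|\tau_k-\bar\tau|+\|\gamma_k-\gamma\|_{L^\infty([0,T])}\to 0$ we obtain $\gamma(\bar\tau)=y$, so in particular $\tau^{0,x,y}_{\gamma}\le\bar\tau<+\infty$.

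Next I would check that $\gamma$ is an admissible trajectory; this is exactly the first part of the proof of Proposition~\ref{PropExistOptim}. Using (H2) to estimate $|k(s,\gamma_k(s),u_k(s))-k(t,\gamma(t),u_k(s))|$ and then the convexity hypothesis (H6), one gets $\dot\gamma(t)\in k(t,\gamma(t),U)$ for a.e.\ $t$, so $\gamma=\gamma^{0,x}_u$ for some control $u$.

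It remains to prove optimality, which is the main point. Introduce the augmented system $(\tilde U,\tilde k)$ from the proof of Proposition~\ref{PropExistOptim} together with the trajectories $\tilde\gamma_k=(\gamma_k,\mathbf{x}_k)$, where $\mathbf{x}_k(t)=\int_0^t L(s,\gamma_k(s),u_k(s))\,\mathrm{d}s$; since $\tilde k(t,\tilde x,\tilde U)$ is convex, the same closure argument produces, along a subsequence, a control $\tilde u=(u,\mathbf u)$ with $\tilde\gamma_k\to(\gamma,\mathbf x)=\tilde\gamma^{0,(x,0)}_{\tilde u}$ and
$$\int_0^t L(s,\gamma(s),u(s))\,\mathrm{d}s\;\le\;\mathbf x(t)\;=\;\lim_{k\to\infty}\mathbf x_k(t)\qquad\text{for every } t\ge 0 .$$
By (H4) one has $|\mathbf x_k(\bar\tau)-\mathbf x_k(\tau_k)|\le\beta_1|\bar\tau-\tau_k|\to 0$, while $\mathbf x_k(\tau_k)=J^{0,x_k,y_k}(u_k)=c(x_k,y_k)$, which converges to $c(x,y)$ by the continuity of $c$ (Proposition~\ref{Lipschitz regularity of the transport cost}). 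Hence $\mathbf x(\bar\tau)=c(x,y)$, and combining with $\tau^{0,x,y}_u\le\bar\tau$ and $L\ge 0$,
$$J^{0,x,y}(u)=\int_0^{\tau^{0,x,y}_u}L(s,\gamma(s),u(s))\,\mathrm{d}s\;\le\;\int_0^{\bar\tau}L(s,\gamma(s),u(s))\,\mathrm{d}s\;\le\;\mathbf x(\bar\tau)=c(x,y).$$
Since $c(x,y)=\min_v J^{0,x,y}(v)\le J^{0,x,y}(u)$, all inequalities are equalities, so $\gamma$ (with control $u$) is optimal from $x$ to $y$.

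The main obstacle is this last step: the cost $u\mapsto J^{0,x,y}(u)$ is only lower semicontinuous for the convergence available (locally uniform convergence of trajectories, with the controls converging only weakly), so one cannot pass to the limit directly in the Lagrangian and must route the argument through the convex augmented system of Proposition~\ref{PropExistOptim}. Extra care is needed because the stopping times $\tau_k$ and the limit $\bar\tau$ need not coincide, and $\bar\tau$ need not be the first hitting time of $y$ by $\gamma$; this is precisely why the lower bound $L\ge\beta_2$ (to control $\tau_k$) and the inequality $\tau^{0,x,y}_u\le\bar\tau$ together with $L\ge 0$ are invoked.
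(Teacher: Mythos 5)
Your proposal is correct and follows essentially the same route as the paper: a bound on the stopping times from (H4) and Lemma~\ref{admissible trajectory}, Arzel\`a--Ascoli compactness of the trajectories, admissibility of the limit via the (H2)/(H6) argument from the proof of Proposition~\ref{PropExistOptim}, and optimality through the augmented convex system combined with $c(x_k,y_k)\to c(x,y)$ from Proposition~\ref{Lipschitz regularity of the transport cost}. You simply spell out in detail the steps the paper compresses into references to Propositions~\ref{PropExistOptim} and~\ref{Lipschitz regularity of the transport cost}.
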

\begin{proof}
First, it is easy to see that there is some $\gamma$ such that $\gamma_k \rightarrow \gamma$. Set $\tau_k:=\tau^{0,x_k,y_k}_{u_k}$, where $u_k$ is the corresponding optimal control. Using (H4) \& Lemma \ref{admissible trajectory}, we infer that $\tau_k \to \bar{\tau}$. Recalling the proof of Proposition \ref{PropExistOptim}, this $\gamma$ is an admissible trajectory from $x$ to $y$ (there is some control $u$ such that $\gamma=\gamma_u^{0,x}$ with $\tau^{0,x,y}_u \leq \bar{\tau}$). Recalling Proposition \ref{PropExistOptim} and using Proposition \ref{Lipschitz regularity of the transport cost}, one can show the following:
\begin{eqnarray*}
\int_{0} ^{\tau^{0,x,y}_u} L(s,\gamma(s),u(s))\,\mathrm{d}s \leq \int_{0} ^{\bar{\tau}} L(s,\gamma(s),u(s))\,\mathrm{d}s &\leq & \lim_{k \to \infty}\int_{0}^{\tau_k} L(s,\gamma_k(s),u_k(s))\,\mathrm{d}s\\
&=&c(x,y).
\end{eqnarray*}
This implies that $\gamma$ is optimal from $x$ to $y$. Moreover, by (H4), we also get that $\bar{\tau}=\tau^{0,x,y}_u$ and so, $\tau^{0,x_k,y_k}_{u_k} \to \tau^{0,x,y}_u$. $\qedhere$ 
\end{proof}
Now consider the following multi-valued map $\Gamma$\,: 
$$ (x,y) \mapsto \Gamma(x,y):=\{(\gamma^{0,x,y}_u,u,\tau^{0,x,y}_u) \,:\,u\,\,\mbox{is an optimal control from}\,\,\,x \,\,\mbox{to}\,\,y\}.$$
The following is a direct consequence of Proposition \ref{stability-optimality} and a standard selection theorem (e.g., see \cite{13}). 

\begin{corollary} \label{closed graph}
The multi-valued map \,$\Gamma$ has a closed graph and therefore has a Borel selector function which will be denoted, in the sequel, by $\Gamma^s$.
\end{corollary}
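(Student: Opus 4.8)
The plan is to prove the two assertions separately. Closedness of the graph will be essentially a restatement of Propositions \ref{PropExistOptim} and \ref{stability-optimality} once the right topologies are fixed; existence of a Borel selector needs a little more care, because the control variable ranges in an infinite-dimensional (non-$\sigma$-compact) space, so one must first isolate the part of the data that lies in a compact set.

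\textbf{Closed graph.} By Lemma \ref{admissible trajectory} and (H4) there is $T>0$, depending only on $\alpha,\kappa,\beta_1,\beta_2$ and $\mathrm{diam}(\Omega)$, with $\beta_2\,\tau^{0,x,y}_u\le c(x,y)\le\beta_1 C\,\mathrm{diam}(\Omega)$, hence $\tau^{0,x,y}_u\le T$, for every optimal $u$ and all $x,y\in\Omega$. Every optimal trajectory is $\kappa$-Lipschitz by (H1) and, starting in $\Omega$, stays on $[0,T]$ in a fixed bounded set, so by Arzel\`a--Ascoli the optimal trajectories lie in a fixed compact $\mathcal K\subset C([0,T];\mathbb R^d)$ (uniform norm). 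Regard $\Gamma$ as a multifunction from $\Omega\times\Omega$ into $\mathcal K\times L^1([0,T];U)\times[0,T]$, the controls carrying the topology of $L^1$-convergence (equivalently, $U$ being compact, convergence in measure) and the times the Euclidean one. If $(x_n,y_n)\to(x,y)$, $(\gamma_n,u_n,\tau_n)\in\Gamma(x_n,y_n)$ and $(\gamma_n,u_n,\tau_n)\to(\gamma,u,\tau)$, then passing to an a.e.\ convergent subsequence of $(u_n)$ and using the continuity and boundedness of $k$, dominated convergence in $\gamma_n(t)=x_n+\int_0^t k(s,\gamma_n(s),u_n(s))\,\mathrm{d}s$ gives $\gamma=\gamma^{0,x}_u$ and $\gamma(\tau)=y$; the same argument in the cost (with $\tau_n=\tau^{0,x_n,y_n}_{u_n}$, $J^{0,x_n,y_n}(u_n)=c(x_n,y_n)$, and continuity of $c$ from Proposition \ref{Lipschitz regularity of the transport cost}) gives $J^{0,x,y}(u)\le c(x,y)$, hence equality, and then $L\ge\beta_2>0$ forces $\tau=\tau^{0,x,y}_u$. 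Thus $(\gamma,u,\tau)\in\Gamma(x,y)$ and the graph of $\Gamma$ is closed.

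\textbf{Borel selector.} Since $L^1([0,T];U)$ is not $\sigma$-compact, I would not invoke a selection theorem on $\Gamma$ directly, but proceed in two steps. First, the multifunction $F:(x,y)\mapsto\{(\gamma,\tau)\in\mathcal K\times[0,T]:\gamma\text{ optimal from }x\text{ to }y,\ \tau=\tau^{0,x,y}\}$ is nonempty-valued by Proposition \ref{PropExistOptim}, and by the argument above (or directly by Proposition \ref{stability-optimality}) has a closed graph; into the compact metric space $\mathcal K\times[0,T]$ this makes it compact-valued and upper semicontinuous, hence Borel measurable, so the Kuratowski--Ryll-Nardzewski theorem (\cite{13}) yields a Borel selector $(x,y)\mapsto(\gamma_{x,y},\tau_{x,y})$. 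Second, for a.e.\ $t$ one has $\dot\gamma_{x,y}(t)\in k(t,\gamma_{x,y}(t),U)$, so by compactness of $U$ and continuity of $L$ the quantity $r_{x,y}(t):=\min\{L(t,\gamma_{x,y}(t),v):v\in U,\ k(t,\gamma_{x,y}(t),v)=\dot\gamma_{x,y}(t)\}$ is attained; a measurable selection on $(\Omega\times\Omega)\times[0,T]$ of the corresponding minimizers produces a control $u_{x,y}(t)$, measurable in $((x,y),t)$, with $(x,y)\mapsto u_{x,y}$ Borel into $L^1([0,T];U)$. Comparing with an optimal control producing $\gamma_{x,y}$ gives $\int_0^{\tau_{x,y}}r_{x,y}\le c(x,y)$, while $u_{x,y}$ is admissible from $x$ to $y$ with $J^{0,x,y}(u_{x,y})\le\int_0^{\tau_{x,y}}r_{x,y}$, so $u_{x,y}$ is optimal, $\gamma_{x,y}=\gamma^{0,x}_{u_{x,y}}$, and (again as $L\ge\beta_2>0$) $\tau_{x,y}=\tau^{0,x,y}_{u_{x,y}}$. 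Hence $\Gamma^s:(x,y)\mapsto(\gamma_{x,y},u_{x,y},\tau_{x,y})$ is a Borel selector of $\Gamma$.

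I expect the main difficulty to be the selector rather than the closed graph: the closed-graph property is little more than a repackaging of the existence and stability results, whereas producing a genuinely \emph{Borel} (not merely universally measurable) selector forces one to peel off the compact data $(\gamma,\tau)$ and then reconstruct a control, the subtle point being that one must take the $L$-minimizing control compatible with the prescribed velocity so as to land on an \emph{optimal}, not merely admissible, control. (Equivalently, one could send $\Gamma$ straight into the compact space of relaxed controls, in which case the convexity hypothesis (H6) re-enters precisely to pass back to a genuine optimal control.)
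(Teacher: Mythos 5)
Your argument is correct, and its skeleton is the same as the paper's: the paper treats the corollary as an immediate consequence of Proposition \ref{stability-optimality} (stability of optimal trajectories and of their end times) together with a standard measurable-selection theorem from \cite{13}, with no further detail given. Where you genuinely depart from the paper is in how the selection theorem is applied. The paper implicitly selects from $\Gamma$ itself, leaving unspecified the topology on the control component and the fact that the space of controls is not $\sigma$-compact, which is exactly the point where ``closed graph $\Rightarrow$ measurable multifunction $\Rightarrow$ Borel selector'' needs justification. Your two-step procedure --- first select the compact data $(\gamma,\tau)$ from a compact-valued, upper semicontinuous multifunction into $\mathcal K\times[0,T]$ via Kuratowski--Ryll-Nardzewski, then reconstruct an optimal control by a second measurable selection of $L$-minimizing velocities compatible with $\dot\gamma_{x,y}$ --- fills precisely this gap, at the cost of some routine but nontrivial joint-measurability bookkeeping (a Borel version of $((x,y),t)\mapsto\dot\gamma_{x,y}(t)$, and passing from joint measurability to Borel measurability of $(x,y)\mapsto u_{x,y}\in L^1$). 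The closed-graph half, including the uniform bound $\tau\le T$ from (H4) and Lemma \ref{admissible trajectory} and the identification of the limit end time via $L\ge\beta_2>0$, is an accurate quantitative repackaging of Propositions \ref{PropExistOptim} and \ref{stability-optimality}. In short: same strategy as the paper, but your selector step is more careful and actually proves the Borel measurability that the paper asserts by citation; your closing remark about relaxed controls is the other standard way to achieve the same thing.
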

\noindent We shall set the notation
$$(\gamma^{x,y},u^{x,y},\tau^{x,y}):=\Gamma^s(x,y),\,\,\mbox{for all}\,\,\,(x,y)\in\Omega \times \Omega,$$
which will eventually be used in Section \ref{Sec. 4}.

\section{Kantorovich duality with boundary tariffs}
 \label{Sec. 3}
In this section, we analyze Problem \eqref{Kantorovich with boundary costs}, its dual principle, and a decomposition of the optimal transport plan into three components according to whether they never visit the boundary, whether they get to it, or if they come from it. \\
 Set
$$\mathcal{P}_0 (\mu^+,\mu^-):=\bigg\{\pi \in \mathcal{M}^+(\Omega\times\Omega)\,:\,(\pi_x)_{\,|\accentset{\circ}{\Omega}}=\mu^+,\,(\pi_y)_{\,|\accentset{\circ}{\Omega}} =\mu^-\bigg\},$$
and consider the following problems:
\small{
 \begin{equation}\label{Problem 1}
{\mathcal T}_1(\mu^\pm;\psi^\pm):= \inf\bigg\{\int_{\Omega \times \Omega}c\,\mathrm{d}\pi
  \,+\,\int_{\partial\Omega}\psi^+\,\mathrm{d}\pi_x \,-\,\int_{\partial\Omega}\psi^-\,\mathrm{d}\pi_y
   \,:\,\pi \in \mathcal{P}_0(\mu^+,\mu^-) \bigg\}
 \end{equation}
 }
\hspace{-0.5em} and
\begin{equation}   \label{Problem 10}
 {\mathcal D}_1(\mu^\pm; \psi^\pm):=   \sup_{\phi^\pm \in C(\Omega)}\left\{\int_\Omega \phi^- \,\mathrm{d} \mu^- - \int_\Omega \phi^+ \,\mathrm{d}\mu^+
\,:\, \begin{array}{l}\psi^- \leq \phi^-\leq \phi^+ \leq \psi^+\; \mbox{on}\; \partial\Omega,\\
(-\phi^+) \oplus \phi^- \leq c \end{array}\right\}.
\end{equation}
In this section, we shall assume that the cost $c$ is Lipschitz continuous on $\Omega\times \Omega$ and that the two boundary costs $\psi^+$ and $\psi^-$ are in $C(\partial\Omega)$ and satisfy the inequality: 
 \begin{equation} \label{g_1g_2.0}
 \psi^-(y)-\psi^+(x)\leq c(x,y)\,\,\;\mbox{for all}\;\,x,\,y \in \partial\Omega.
 \end{equation}
Under these assumptions, 
we have the following result.
\begin{proposition} \label{existence of a minimizer}
The infimum in Problem \eqref{Problem 1} is attained, and the following duality formula holds: 
\begin{equation}\label{T1:D1}
{\mathcal T}_1 (\mu^+, \mu^-; \psi^+, \psi^-)={\mathcal D}_1 (\mu^+, \mu^-; \psi^+, \psi^-).
\end{equation}
   \end{proposition}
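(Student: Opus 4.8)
The plan is to prove the two assertions together by reducing Problem \eqref{Problem 1} to a classical Kantorovich problem on an enlarged space in which the boundary reservoir is made explicit. Concretely, I would introduce two disjoint ``ghost'' copies of $\partial\Omega$, one attached as an extra source and one as an extra sink, and define a new cost $\bar{c}$ on the product of the enlarged spaces: $\bar c$ agrees with $c$ on $\Omega\times\Omega$; transporting from an interior-type point $x$ to the export copy of a boundary point $y$ costs $c(x,y)-\psi^-(y)$; transporting from the import copy of $x$ to an interior-type point $y$ costs $\psi^+(x)+c(x,y)$; transporting from the import copy of $x$ directly to the export copy of $y$ costs $\psi^+(x)+c(x,y)-\psi^-(y)$, which by the no-arbitrage hypothesis \eqref{g_1g_2.0} is $\geq 0$; and one adds a large enough ``parking'' option identifying leftover reservoir mass at zero cost so that the marginal constraints can always be satisfied. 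A transport plan $\pi\in\mathcal P_0(\mu^+,\mu^-)$ corresponds exactly to a plan on the enlarged space whose interior marginals are $\mu^+,\mu^-$ and whose boundary marginals are free, and the objective in \eqref{Problem 1} is (up to the neutral parking term) the enlarged transport cost. Existence of a minimizer then follows from the direct method: the enlarged cost is lower semicontinuous (indeed continuous, by Lipschitz continuity of $c$ and continuity of $\psi^\pm$), the feasible set is tight because all mass lives in a fixed compact set, hence weakly-$*$ closed and sequentially compact, and one extracts a weak-$*$ limit along a minimizing sequence, passing to the limit in the objective and in the (closed) marginal constraints.

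For the duality \eqref{T1:D1} I would apply classical Kantorovich duality for the enlarged problem — valid since $\bar c$ is continuous and bounded below on a compact space — to obtain a pair of continuous potentials on the enlarged source and sink spaces satisfying $(-\bar\phi^+)\oplus\bar\phi^-\le\bar c$. The key step is then to \emph{decode} these enlarged potentials into admissible competitors for \eqref{Problem 10}. Restricting the enlarged potentials to the interior copies gives functions $\phi^+,\phi^-$ on $\Omega$; the constraint coming from pairs of the form (interior $x$, export copy of $y$) forces $\phi^+(x)+\text{(export potential at }y)\ge -[c(x,y)-\psi^-(y)]$ — wait, signs — more carefully, the inequality $\bar\phi^-(\text{export }y)-\bar\phi^+(x)\le c(x,y)-\psi^-(y)$ combined with the freedom to optimize over the free boundary marginals shows that on $\partial\Omega$ one may take the export potential to be exactly $\psi^-$ and the import potential exactly $\psi^+$, and that $\phi^-\le\phi^+$ on $\partial\Omega$ with $\psi^-\le\phi^-$ and $\phi^+\le\psi^+$ there; the remaining inequality $(-\phi^+)\oplus\phi^-\le c$ on $\Omega\times\Omega$ is immediate. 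Thus the optimal enlarged potentials yield a feasible pair for \eqref{Problem 10} with the same value, giving ${\mathcal D}_1\ge{\mathcal T}_1$. The reverse inequality ${\mathcal D}_1\le{\mathcal T}_1$ is the easy direction and I would prove it directly: for any feasible $\pi$ and any feasible $(\phi^+,\phi^-)$, integrate the pointwise inequalities $(-\phi^+)\oplus\phi^-\le c$ against $\pi$ on the interior part and $\psi^-\le\phi^-$, $\phi^+\le\psi^+$ against $\pi_y\!\restriction_{\partial\Omega}$, $\pi_x\!\restriction_{\partial\Omega}$ respectively, and sum; a short bookkeeping of marginals (using $(\pi_x)_{|\accentset{\circ}{\Omega}}=\mu^+$, $(\pi_y)_{|\accentset{\circ}{\Omega}}=\mu^-$) gives $\int\phi^-\,\mathrm d\mu^--\int\phi^+\,\mathrm d\mu^+\le\int c\,\mathrm d\pi+\int_{\partial\Omega}\psi^+\,\mathrm d\pi_x-\int_{\partial\Omega}\psi^-\,\mathrm d\pi_y$.

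As an alternative to the enlargement trick, one could run a direct Fenchel--Rockafellar / minimax argument: write ${\mathcal T}_1$ as an infimum over $\pi\in\mathcal M^+(\Omega\times\Omega)$ of $\int c\,\mathrm d\pi$ plus convex indicator-type penalties enforcing the interior marginal constraints, dualize the two marginal constraints with multipliers $\phi^+,\phi^-\in C(\Omega)$, and observe that finiteness of the resulting inner supremum over $\pi$ forces precisely $(-\phi^+)\oplus\phi^-\le c$ in the interior and the boundary inequalities $\psi^-\le\phi^-\le\phi^+\le\psi^+$ on $\partial\Omega$ (the latter emerging from the unconstrained boundary parts of the marginals, whose coefficients in the Lagrangian are $\psi^+-\phi^+$ and $\phi^--\psi^-$ and must be $\ge0$). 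One must check a constraint qualification to swap inf and sup: here the no-arbitrage condition \eqref{g_1g_2.0} guarantees the dual feasible set is nonempty (e.g. take $\phi^+=\psi^+$, $\phi^-=\psi^-$ suitably extended), and the compactness of $\Omega$ plus continuity of $c$ gives the needed weak-$*$ compactness on the primal side. I expect the main obstacle to be this interchange of infimum and supremum — ensuring no duality gap — together with the careful handling of the boundary potentials: one must verify that extending $\psi^\pm$ from $\partial\Omega$ to $\Omega$ can be done consistently with $(-\phi^+)\oplus\phi^-\le c$, which is exactly where \eqref{g_1g_2.0} is used, via a $c$-transform (inf-convolution) construction $\phi^+(x):=\sup_{y\in\partial\Omega}(\psi^-(y)-c(x,y))$ and its companion.
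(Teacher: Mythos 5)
Your overall strategy (reduce to a classical Kantorovich problem, then decode the dual potentials) is in the right family, and your weak-duality direction ${\mathcal D}_1\le{\mathcal T}_1$ is correct as written; but both main steps have genuine gaps. For existence, the claim that compactness follows because ``all mass lives in a fixed compact set'' is not enough: compactness of $\Omega$ gives tightness, but admissible plans in $\mathcal P_0(\mu^+,\mu^-)$ (and in your enlargement with free reservoir marginals) can carry arbitrarily large mass on $\partial\Omega\times\partial\Omega$, so a minimizing sequence need not have bounded total mass and weak-$*$ sequential compactness fails a priori. The paper closes exactly this hole: by the no-arbitrage condition \eqref{g_1g_2.0}, replacing $\pi$ by $\pi_{|(\partial\Omega\times\partial\Omega)^c}$ preserves admissibility and does not increase the cost, so one may take $\pi_n(\partial\Omega\times\partial\Omega)=0$, which yields $\pi_n(\Omega\times\Omega)\le\mu^+(\Omega)+\mu^-(\Omega)$ and hence compactness. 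Your ``parking'' device does not substitute for this trimming argument.

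For the duality \eqref{T1:D1}, the pivotal step ``apply classical Kantorovich duality for the enlarged problem'' is not available in the form you describe. If the boundary marginals are left free, the enlarged problem is not a two-marginal Kantorovich problem, so the classical theorem does not apply; if instead you fix reservoir measures $R^\pm$ on the ghost copies of $\partial\Omega$, the equivalence with \eqref{Problem 1} breaks down, because the import/export marginals $\xi^\pm$ of an admissible plan need not be dominated by $R^\pm$ as measures (e.g.\ a Dirac $\xi^+$ cannot sit below a fixed diffuse reservoir), so the leftover mass cannot be ``parked'' while keeping the ghost marginal exactly $R^\pm$. Even with a workable enlargement (say one virtual source and sink point with costs $\min_{x\in\partial\Omega}[\psi^+(x)+c(x,y)]$ and $\min_{y\in\partial\Omega}[c(x,y)-\psi^-(y)]$), the decoding is not the one you assert: what falls out on $\partial\Omega$ is $\psi^-\le\phi^+$ and $\phi^-\le\psi^+$, and upgrading to the chain $\psi^-\le\phi^-\le\phi^+\le\psi^+$ demanded in \eqref{Problem 10} requires further $c$-transform modifications of the potentials using \eqref{g_1g_2.0} --- precisely the point where your write-up hesitates over signs and then asserts the conclusion. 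The same missing work resurfaces in your minimax alternative as the unjustified inf--sup interchange. The paper's proof supplies this content differently: it perturbs the boundary constraints by $(h^+,h^-)$, proves the perturbed dual value $H$ is convex and lower semicontinuous (potentials sharing the Lipschitz constant of $c$, Ascoli--Arzel\`a), and computes $H^{\star}(\xi^+,\xi^-)$ via classical Kantorovich duality for each \emph{fixed} pair of boundary measures, so that $H^{\star\star}(0,0)=H(0,0)$ gives \eqref{T1:D1}; this conjugate-duality step is the rigorous counterpart of the reduction you are attempting, with the free reservoir handled by convexity rather than by an a priori enlargement.
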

  \begin{proof} 
    Set 
   $$P(\pi):=\int_{\Omega\times\Omega} c\,\mathrm{d} \pi\; +\int_{\partial\Omega}\psi^+\,\mathrm{d}\pi_x\, - \int_{\partial\Omega}\psi^-\,\mathrm{d}\pi_y,\;\;\forall\;\pi \in \mathcal{M}(\Omega\times\Omega).$$
  $P$ is then continuous on $\mathcal{P}_0 (\mu^+,\mu^-)$ with respect to the weak$^*$ convergence of measures. Indeed, if $(\pi_n)_n$ is a sequence in $\mathcal{P}_0(\mu^+,\mu^-)$ such that $\pi_n{\rightharpoonup}\,\pi$, then, for every $n$,
 there exists $\xi_n^\pm \in \mathcal{M}^+(\partial\Omega)$ such that 
 $$\pi_x^n=\mu^+ + \xi_n^+\,,\;\;\pi_y^n=\mu^- + \xi_n^-\quad \hbox{
 and \,\,
 $ \xi_n^\pm \rightharpoonup \xi^\pm,$}$$
 where $\,\pi_x=\mu^+ + \xi^+\,\,$ and $\,\,\pi_y=\mu^- + \xi^-$.
As $\,\psi^+$ and \,$\psi^-$ are continuous on $\partial\Omega$, it follows that 
$P(\pi_n) \rightarrow P(\pi).$

On the other hand, we observe that if \,$\pi \in \mathcal{P}_0 (\mu^{+},\mu^{-})$\, and \,$\tilde{\pi}:=\pi_{\,|(\partial\Omega\times\partial\Omega)^c}$, then \,$\tilde{\pi}$\, also belongs to \,$\mathcal{P}_0 (\mu^{+},\mu^{-})$. In addition, we have
\begin{eqnarray*}
\int_{\Omega\times\Omega} c\,\mathrm{d} \pi \,+\int_{\partial\Omega}\psi^+\,\mathrm{d}\pi_x\, - \int_{\partial\Omega}\psi^-\,\mathrm{d}\pi_y&=&
\int_{\partial{\Omega}\times\partial\Omega} (c(x,y) + \psi^+(x) - \psi^-(y))\,\mathrm{d} \pi(x,y)\\
&&+\, \int_{(\partial{\Omega}\times\partial{\Omega})^c}c\,\mathrm{d} \pi \,+ \int_{\partial{\Omega}\times{\Omega}^{\circ}} \psi^+(x)\,\mathrm{d} \pi(x,y)
\,\\
&& \qquad- \int_{{\Omega}^{\circ}\times\partial{\Omega}} \psi^-(y)\,\mathrm{d} \pi(x,y).
\end{eqnarray*}
By \eqref{g_1g_2.0}, we get that  
$$  \int_{\Omega \times \Omega} c\,\mathrm{d} \pi\; +\int_{\partial\Omega}\psi^+\,\mathrm{d}\pi_x\, - \int_{\partial\Omega}\psi^-\,\mathrm{d}\pi_y  \geq  \int_{\Omega\times\Omega} c\,\mathrm{d}\tilde{\pi} \,+\int_{\partial\Omega}\psi^+\,\mathrm{d}\tilde{\pi}_x \,-\int_{\partial\Omega}\psi^-\,\mathrm{d}\tilde{\pi}_y.$$\\
Now take a minimizing sequence $(\pi_n)_n\subset\mathcal{P}_0(\mu^+,\mu^-)$ for \eqref{Problem 1}. We can suppose that $\pi_n (\partial\Omega\times\partial\Omega)=0.$ 
In this case, we get 
\begin{eqnarray*}
\pi_n(\Omega\times\Omega) &\leq  & \pi_n({\Omega}^0\times\Omega)+
  \pi_n(\Omega\times{\Omega}^0)
\\ 
&=& \mu^+(\Omega) + \mu^-(\Omega).
\end{eqnarray*}
 Hence, there exist a subsequence $ (\pi_{n_k})_{n_k}$ and a plan $\pi \in \mathcal{P}_0 (\mu^+,\mu^-) $ such that
 $ \pi_{n_k} {\rightharpoonup}\,\pi$. But, the continuity of the total cost \,$P$\, implies that this \,$\pi$\, is a minimizer for \eqref{Problem 1}.\\
 
  We now establish the duality formula. 
 A proof of the duality formula for a general Lipschitz cost $c$ 
based on the Fenchel-Rockafellar duality was given in \cite{7}. Here, we give an alternative proof based on  a perturbative argument used in \cite{Dweik,8}. Define the  functional $H: C(\partial\Omega)\times C(\partial\Omega) \mapsto \mathbb{R} \cup \{+\infty\}$ as follows:
\begin{eqnarray*} H(h^+,h^-)
 =-\sup\left\{\int_\Omega \varphi^- \mathrm{d} \mu^- - \int_\Omega \varphi^+ \mathrm{d}\mu^+: \begin{array}{l} \psi^- + h^- \leq \varphi^-\leq \varphi^+ \leq \psi^+ - h^+\;\mbox{on}\;\partial\Omega,\\
 (-\varphi^+) \oplus \varphi^- \leq c\end{array}\right\}.
 \end{eqnarray*}\\
Note that $H(h^+,h^-) \in \mathbb{R} \cup \{+\infty\}$. This follows immediately from the fact that for a maximizing sequence $(\varphi_k^+,\varphi_k^-)$, we can always assume them to share the same Lipschitz constant as $c$. In fact, if we replace \,$\varphi_k^-$\, by \,$\phi_k^-$ where \,$\phi_k^-(y) := \min\{c(x,y) + \varphi_k^+(x)\,:\,x \in \Omega\}$, for every $y \in \Omega$, then the constraints are preserved and the integrals increased. We can then assume that they are uniformly bounded 
in such a way that Ascoli-Arzel\`a's Theorem applies. Next, we show that $H$ is convex and lower semi-continuous.\\

  For the convexity\,: take \,$t \in (0,1)$\, and \,$(h_0^+,h_0^-),\,(h_1^+,h_1^-)\in C(\partial\Omega)\times C(\partial\Omega)$\,
 and, let \,$(\varphi_0^+,\varphi_0^-)$ and $(\varphi_1^+,\varphi_1^-)$ be their optimal potentials. Set $$ h_t^+:=(1-t)h_0^+ + t h_1^+,\,\,h_t^-:=(1-t)h_0^-+th_1^- $$ 
 and $$\varphi_t^+:=(1-t)\varphi_0^+ + t\varphi_1^+,\,\,\varphi_t^-:=(1-t)\varphi_0^- + t\varphi_1^-.$$
As 
$$ \psi^- + h_0^- \leq \varphi_0^-\ \,\,\mbox{and}\,\,\  \psi^- + h_1^- \leq \varphi_1^-\,\,\,\,\,\mbox{on}\,\,\,\partial\Omega,
$$
$$ \varphi_0^+ \leq  \psi^+ - h_0^+\ \,\,\mbox{and}\,\,\   \varphi_1^+ \leq   \psi^+ - h_1^+\,\,\,\,\,\mbox{on}\,\,\,\partial\Omega,
$$
and
$$(-\varphi_0^+) \oplus \varphi_0^- \leq c\,\,\;\mbox{and}\;\,\,(-\varphi_1^+) \oplus \varphi_1^- \leq c, $$ 
then 
$$
\psi^- + h_t^-   \leq \varphi_t^-\leq \varphi_t^+ \leq \psi^+ -  h_t^+\,\,\,\,\mbox{on}\;\, \partial\Omega, \,\,\;\,\mbox{and}\,\,\;(-\varphi_t^+) \oplus \varphi_t^- \leq c.$$ 
As a consequence of that, we infer that $(\varphi_t^+,\varphi_t^-)$ is admissible in the max defining $-H(h_t^+,h_t^-)\,$ and then, 
 $$ H(h_t^+,h_t^-)\leq \int_\Omega\varphi_t^+\,\mathrm{d}\mu^+ - \int_\Omega \varphi_t^- \,\mathrm{d}\mu^-
 =(1-t)H(h_0^+,h_0^-)+tH(h_1^+,h_1^-). $$
 
 For the semi-continuity$\,$: take $h_k^+\rightarrow h^+$ and $h_k^- \rightarrow h^-$  uniformly on $\partial\Omega$. Let $(h_{k_i}^+,h_{k_i}^-)_{k_i}$ be a subsequence such that $\liminf_k H(h_k^+,h_k^-)=\lim_{k_i}H(h_{k_i}^+,h_{k_i}^-)$ (for simplicity of notation, we still denote this subsequence by $(h_k^+,h_k^-)_k$) and let $(\varphi_k^+,\varphi_k^-)_k$ be
  their corresponding optimal potentials. As $\varphi_k^+,\,\varphi_k^-$ have the same Lipschitz constant as the transport cost $c$ and $(h_k^+)_k,\,(h_k^-)_k$ are equibounded, then, by Ascoli-Arzel\`a's Theorem, there are two continuous functions $\varphi^+,\,\varphi^-$ and a subsequence $(\varphi_k^+,\varphi_k^-)_{k}$ such that $\varphi_{k}^+\rightarrow\varphi^+$ and $\varphi_{k}^- \rightarrow \varphi^-$ uniformly in $\Omega$. As 
  $$\psi^- + h_k^-  \leq \varphi_k^-\leq \varphi_{k}^+ \leq \psi^+ - h_k^+  \,\;\,\mbox{on}\;\, \partial\Omega, \,\,\;\;\mbox{and}\,\,\,\;(-\varphi_k^+) \oplus \varphi_k^- \leq c, 
  $$
 then 
 $$  
 \psi^- + h^-\leq \varphi^-  \leq \varphi^+ \leq   \psi^+ - h^+\,\;\,\mbox{on}\,\; \partial\Omega, \;\,\,\;\mbox{and}\,\,\,\;(-\varphi^+) \oplus \varphi^- \leq c.
 $$ 
 Consequently, the pair $(\varphi^+,\varphi^-)$ is admissible in the max defining $ -H(h^+,h^-)$ and so, one has
 $$ H(h^+,h^-)\leq \int_\Omega\varphi^+ \,\mathrm{d}\mu^+ -\int_\Omega\varphi^- \,\mathrm{d}\mu^- 
 =\liminf_{k} H(h_{k}^+,h_{k}^-).$$ 
Since $H$ is convex and lower semi-continuous, it is equal to its double Legendre transform, i.e., $ H^{\star\star}=H $, and in particular, $H^{\star \star}(0,0)=H(0,0)$. By its definition, we have  
$$
H(0,0)=-\mathcal{D}_1(\mu^+,\mu^-;\psi^+,\psi^-).
$$ 
We now compute $H^{\star\star}(0,0)$. Take $\xi^+,\,\xi^-$ in $\mathcal{M}(\partial\Omega)$, then we have 
\begin{eqnarray*}
H^{\star}(\xi^+,\xi^-)=  \sup_{h^+,\,h^- \,\in \,C(\partial\Omega)
}\bigg\{\int_{\partial\Omega} h^+\,\mathrm{d}\xi^+ + \int_{\partial\Omega} h^-\,\mathrm{d}\xi^- - H(h^+,h^-)\bigg\},
\end{eqnarray*} 
which is equal to:
\begin{eqnarray*}
\sup_{(\varphi^\pm,h^\pm)}\left\{\int_{\partial\Omega} h^+ \mathrm{d}\xi^+ + \int_{\partial\Omega} h^- \mathrm{d}\xi^- - \int_\Omega \varphi^+ \mathrm{d}\mu^+ + \int_\Omega \varphi^- \mathrm{d}\mu^-  : \begin{array}{l}
\psi^- + h^- \leq \varphi^- \,\,\,\mbox{on}\,\,\,\partial\Omega,\\
\varphi^+ \leq\psi^+ - h^+\,\,\,\mbox{on}\,\,\,\partial\Omega,\\
(-\varphi^+) \oplus  \varphi^- \leq c \end{array}\right\}.
\end{eqnarray*}
If $\xi^+\notin \mathcal{M}^+(\partial\Omega) $, then there exists $h_0^+ \in C(\partial\Omega)$ 
 such that $h_0^+\geq 0$ and $\int_{\partial\Omega} h_0^+\,\mathrm{d}\xi^+ <0, 
$ and therefore
$$H^{\star}(\xi^+,\xi^-)\geq\;-k\int_{\partial\Omega} h_0^+\,\mathrm{d}\xi^+ +\int_{\partial\Omega} \psi^+\,\mathrm{d}\xi^+ -\int_{\partial\Omega} \psi^-\,\mathrm{d}\xi^-\;\underset{k \to + \infty}{\longrightarrow}  \;+\infty.$$ 
Similarly if $\xi^-\notin \mathcal{M}^+(\partial\Omega).$ Now suppose that $\xi^\pm \in \mathcal{M}^+(\partial\Omega) $. As $\psi^- + h^-\leq \varphi^- \ \mbox{and}\ \varphi^+ \leq  \psi^+ - h^+\;\mbox{on}\;\partial\Omega,$ we should choose the largest possible $h^+$ and $h^-$, i.e. $\,h^+(x)= \psi^+(x) - \varphi^+(x)\,$ and $\,h^-(y)=\varphi^-(y) - \psi^-(y)$, for all $x,\,y \in\partial\Omega$. Hence, we get
\begin{eqnarray*}
 H^{\star}(\xi^+,\xi^-)&=&\sup\bigg\{\int_{\Omega} \varphi^- \mathrm{d}(\mu^- + \xi^-) - \int_{\Omega} \varphi^+ \mathrm{d}(\mu^+ + \xi^+) : (-\varphi^+) \oplus \varphi^- \leq c \bigg\}\\
 &&  \qquad\qquad\qquad+\,\,\, \int_{\partial\Omega} \psi^+ \mathrm{d}\xi^+ - \int_{\partial\Omega} \psi^- \mathrm{d}\xi^-,
 \end{eqnarray*} 
and therefore,
\begin{eqnarray*}
 H^{\star}(\xi^+,\xi^-)&=& \inf\left\{\int_{\Omega\times\Omega}c(x,y)\,\mathrm{d}\pi
\,:\,\pi \in \mathcal{P}(\mu^+ + \xi^+,\mu^- + \xi^-)\right\}\\
&&\qquad\qquad\qquad\qquad\qquad\qquad
+ \,\,\,\int_{\partial\Omega} \psi^+\,\mathrm{d}\xi^+ - \int_{\partial\Omega} \psi^-\,\mathrm{d}\xi^-\\ 
&=&\inf_{\pi \in \mathcal{P}(\mu^+ + \xi^+,\mu^-+\xi^-)}\left\{\int_{\Omega\times\Omega}c(x,y)\,\mathrm{d}\pi
+ \int_{\partial\Omega} \psi^+\,\mathrm{d}\pi_x
-\int_{\partial\Omega} \psi^-\,\mathrm{d}\pi_y \right\}.
\end{eqnarray*} 
Consequently,
\begin{eqnarray*}
H^{\star\star}(0,0)&=&\sup\bigg\{-H^{\star}(\xi^+,\xi^-)\,:\,\xi^+,\,\xi^-\in\mathcal{M}^+(\partial\Omega)\bigg\}\\
&=&-\inf \left\{\int_{\Omega\times\Omega}c(x,y)\,\mathrm{d}\pi +
\int_{\partial\Omega}\psi^+\,\mathrm{d}\pi_x -
\int_{\partial\Omega}\psi^-\,\mathrm{d}\pi_y\,:\,\pi\in \mathcal{P}_0 (\mu^+,\mu^-)\right\}.\qedhere
\end{eqnarray*}
\end{proof}
Our aim now is to decompose the transport problem \eqref{Problem 1} into three subproblems: the first one will be for transporting mass from the interior of the domain to the interior, making it a standard Monge-Kantorovich transport; the second transports mass from the interior to the boundary and the last one moves mass from the boundary to the interior. \\
To do that, we fix a minimizer $\pi$ for \eqref{Problem 1} and denote by $\xi^+$ and $\xi^-$ the two positive measures concentrated on the boundary of $\Omega$ such that $\pi_x=\mu^+  + \xi^+ $ and $ \pi_y=\mu^-+\xi^-$ (this means that $\xi^+,\,\xi^-$ encode the import/export masses).
 Set $$\pi^{ii}:=\pi_{\,|\Omega^{\circ}\times\Omega^{\circ}},\,
\pi^{ib}:=\pi_{\,|\Omega^{\circ}\times\partial\Omega},\,
\pi^{bi}:=\pi_{\,|\partial\Omega\times\Omega^{\circ}},\,
\pi^{bb}:=\pi_{\,|\partial\Omega\times\partial\Omega}$$ and
$$\nu^+:=\pi_x^{ib},\,
 \,\nu^-:=\pi_y^{bi}.$$ 
 Recalling the proof of Proposition \ref{existence of a minimizer}, we can assume that the boundary-boundary part $\pi^{bb}=0$ (this is always possible thanks to our assumption \eqref{g_1g_2}). Now, let $T^{ib}$ (resp., $T^{bi}$) be a Borel selector function of the following possibly multivalued set of minimizers
$$T^{ib}(x)\in \mbox{argmin}\left\{c(x,y) - \psi^-(y),\,y \in \partial\Omega\right\},\, \; \mbox{for all} \;x \in \Omega.$$ 
(resp., 
$$T^{bi}(y)\in \mbox{argmin}\left\{c(x,y) + \psi^+(x),\,x \in \partial\Omega\right\},\,\,\mbox{for all}\,\;y \in \Omega.)$$
Note that such a selector exists since their graph is closed and so one can use a selection theorem, such as in \cite{13}. 
The following proposition is straightforward.
 \begin{proposition}
Consider the following three transport problems:
\begin{equation} \label{(P1)}
\inf \left\{\int_{\Omega\times\Omega} c(x,y)\,\mathrm{d}\Lambda
\,:\,\Lambda \in \mathcal{P}(\mu^{+}-\nu^+,\mu^{-}-\nu^-)\right\},
\end{equation}
\begin{equation}\label{(P2)}
\inf \left\{\int_{\Omega\times\Omega} c(x,y)\,\mathrm{d}\Lambda\,-\,\int_{\partial{\Omega}}\psi^-\,\mathrm{d}\chi^-
\,:\,\Lambda\in\mathcal{P}(\nu^+,\chi^-),\,\,\spt(\chi^-) \subset \partial\Omega \right\},
\end{equation}
and
\begin{equation} \label{(P3)}
 \inf \left\{\int_{\Omega\times\Omega} c(x,y)\,\mathrm{d}\Lambda \,+\,
\int_{\partial{\Omega}}\psi^+\,\mathrm{d}\chi^+
\,:\,\Lambda \in \mathcal{P}(\chi^+,\nu^-),\,\,\spt(\chi^+) \subset \partial\Omega \right\}.
\end{equation}
Then, $\pi^{ii}$, $(\pi^{ib}, T^{ib}_{\#} \nu^+)$ and $(\pi^{bi},T^{bi}_{\#}\nu^-)$ solve \eqref{(P1)},\,\eqref{(P2)} and \eqref{(P3)}, respectively. 
Moreover, the optimal transport plan from the interior to the boundary $\pi^{ib}=(Id,T^{ib})_{\#} \nu^+$ and solves
 \begin{equation}\min\left\{\int_{\Omega\times\Omega}c(x,y)\,\mathrm{d}\pi
 \,:\,\pi \in \mathcal{P}(\nu^+,T^{ib}_{\#}\nu^+) \right\}.
  \end{equation} 
  Similarly, the optimal transport plan  from the boundary to the interior    $\pi^{bi}=(T^{bi},Id)_{\#} \nu^-$ and solves
 \begin{equation}
 \min\left\{\int_{\Omega\times\Omega}c(x,y)\,\mathrm{d}\pi
 \,:\,\pi \in \mathcal{P}(T^{bi}_{\#}\nu^-,\nu^-) \right\}.
 \end{equation}
 \end{proposition}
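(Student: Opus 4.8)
The plan is to run a gluing argument off the global minimizer $\pi$ of \eqref{Problem 1}. Recall that, thanks to the no-arbitrage inequality \eqref{g_1g_2.0}, we may assume $\pi^{bb}=0$; hence the only piece of $\pi$ whose $x$-marginal charges $\partial\Omega$ is $\pi^{bi}$ and the only piece whose $y$-marginal charges $\partial\Omega$ is $\pi^{ib}$, so that $\xi^+=\pi_x^{bi}$ and $\xi^-=\pi_y^{ib}$. The three sub-problems are nothing but the restrictions of $\pi$ to $\Omega^\circ\times\Omega^\circ$, $\Omega^\circ\times\partial\Omega$ and $\partial\Omega\times\Omega^\circ$, and I would deduce optimality of each piece from optimality of $\pi$ by a cut-and-paste.

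\emph{Admissibility.} Restricting $\pi_x=\mu^++\xi^+$ and $\pi_y=\mu^-+\xi^-$ to $\Omega^\circ$ yields $\pi_x^{ii}=\mu^+-\nu^+$ and $\pi_y^{ii}=\mu^--\nu^-$ (with $\mu^+-\nu^+\ge0$, since $\nu^+=\pi_x^{ib}\le(\pi_x)_{|\Omega^\circ}=\mu^+$, and likewise $\mu^--\nu^-\ge0$), so $\pi^{ii}\in\mathcal{P}(\mu^+-\nu^+,\mu^--\nu^-)$ is a competitor for \eqref{(P1)}; similarly $\pi^{ib}\in\mathcal{P}(\nu^+,\pi_y^{ib})$ with $\spt(\pi_y^{ib})\subset\partial\Omega$ is a competitor for \eqref{(P2)}, and $\pi^{bi}\in\mathcal{P}(\pi_x^{bi},\nu^-)$ with $\spt(\pi_x^{bi})\subset\partial\Omega$ is a competitor for \eqref{(P3)}.

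\emph{Optimality.} For \eqref{(P1)}: if some admissible $\Lambda$ had $\int c\,\mathrm{d}\Lambda<\int c\,\mathrm{d}\pi^{ii}$, then---because $\mu^+-\nu^+$ and $\mu^--\nu^-$ are supported in $\Omega^\circ$---the plan $\tilde\pi:=\Lambda+\pi^{ib}+\pi^{bi}$ lies in $\mathcal{P}_0(\mu^+,\mu^-)$, has marginals agreeing with those of $\pi$ on $\partial\Omega$ (hence identical tariff terms) and strictly smaller total cost, contradicting the minimality of $\pi$. For \eqref{(P2)}, every competitor $(\Lambda,\chi^-)$ satisfies $\int c\,\mathrm{d}\Lambda-\int_{\partial\Omega}\psi^-\,\mathrm{d}\chi^-=\int_{\Omega^\circ\times\partial\Omega}(c(x,y)-\psi^-(y))\,\mathrm{d}\Lambda$, and since $c(x,T^{ib}(x))-\psi^-(T^{ib}(x))=\min_{y\in\partial\Omega}(c(x,y)-\psi^-(y))$,
\[
\int_{\Omega^\circ\times\partial\Omega}\big(c(x,y)-\psi^-(y)\big)\,\mathrm{d}\Lambda\ \ge\ \int_{\Omega^\circ}\big(c(x,T^{ib}(x))-\psi^-(T^{ib}(x))\big)\,\mathrm{d}\nu^+ ,
\]
a bound attained by $\big((Id,T^{ib})_{\#}\nu^+,\,T^{ib}_{\#}\nu^+\big)$, which therefore solves \eqref{(P2)}. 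Substituting $(Id,T^{ib})_{\#}\nu^+$ for $\pi^{ib}$ inside $\pi$ (keeping $\pi^{ii}$ and $\pi^{bi}$) gives a competitor for \eqref{Problem 1} of total cost $\le P(\pi)$, so minimality of $\pi$ forces equality in the last display along $\pi^{ib}$; hence $\pi^{ib}$ is concentrated on the set where $y$ realizes $\min_{y'\in\partial\Omega}(c(x,y')-\psi^-(y'))$ and, possibly after replacing it by $(Id,T^{ib})_{\#}\nu^+$ (which leaves $P(\pi)$ unchanged), we get $\pi^{ib}=(Id,T^{ib})_{\#}\nu^+$ and $\pi_y^{ib}=T^{ib}_{\#}\nu^+$. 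The mirror-image argument, with $c(\cdot,y)+\psi^+(\cdot)$ minimized over $\partial\Omega$ and the selector $T^{bi}$, treats \eqref{(P3)} and yields $\pi^{bi}=(T^{bi},Id)_{\#}\nu^-$. Finally, once the second (resp.\ first) marginal in \eqref{(P2)} (resp.\ \eqref{(P3)}) is frozen at $T^{ib}_{\#}\nu^+$ (resp.\ $T^{bi}_{\#}\nu^-$), the tariff term becomes a constant, so a solution of \eqref{(P2)} (resp.\ \eqref{(P3)}) with that marginal is precisely a solution of $\min\{\int c\,\mathrm{d}\pi:\pi\in\mathcal{P}(\nu^+,T^{ib}_{\#}\nu^+)\}$ (resp.\ of $\mathcal{P}(T^{bi}_{\#}\nu^-,\nu^-)$), which is the ``moreover'' part.

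I expect the only delicate point to be the bookkeeping inside each paste: checking that $\tilde\pi$ retains exactly the boundary marginals of $\pi$ (so the import/export tariffs are untouched) while the transport cost $\int c$ strictly drops, together with the measurable-selection issue when the relevant $\mathrm{argmin}$ is multivalued --- handled, as elsewhere in this section, by fixing the Borel selectors $T^{ib},T^{bi}$ and identifying $\pi^{ib},\pi^{bi}$ with the graph plans they induce.
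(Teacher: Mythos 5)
Your proposal is correct: the paper states this proposition without proof (calling it ``straightforward''), and your admissibility check plus cut-and-paste argument --- gluing a cheaper competitor back into $\pi$ while keeping the boundary marginals, hence the tariff terms, unchanged (or accounting for the change via the combined integrand $c(x,y)-\psi^-(y)$, resp.\ $c(x,y)+\psi^+(x)$) --- is exactly the intended justification. Your caveat that $\pi^{ib}=(Id,T^{ib})_{\#}\nu^+$ (and likewise $\pi^{bi}=(T^{bi},Id)_{\#}\nu^-$) may hold only after a cost-neutral replacement when the boundary $\argmax$/$\mathrm{argmin}$ is multivalued is a fair and careful reading of the statement's implicit convention of identifying these pieces with the graph plans induced by the fixed Borel selectors.
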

We finish this section by the following result.
\begin{proposition}
Let $(\varphi^+,\varphi^-)$ be a maximizer of the dual problem \eqref{dual with boundary costs} and let $\pi$ be an optimal transport plan for \eqref{Kantorovich with boundary costs}. Let $\xi^+$ and $\xi^-$ be the two positive measures on $\partial\Omega$ such that $\pi_x=\mu^+ + \xi^+$ and $\pi_y=\mu^- + \xi^-$. Then, we have the following
$$\varphi^+=\psi^+ \,\,\,\mbox{on}\,\,\,\spt(\xi^+),\,\,\varphi^-=\psi^- \,\,\,\mbox{on}\,\,\,\spt(\xi^-).$$ 
\end{proposition}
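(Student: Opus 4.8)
The plan is to exploit the complementary slackness inherent in the duality ${\mathcal T}_1={\mathcal D}_1$ established in Proposition \ref{existence of a minimizer}. First I would observe that since $(\varphi^+,\varphi^-)$ is dual-optimal and $\pi$ is primal-optimal, writing $\pi_x=\mu^++\xi^+$ and $\pi_y=\mu^-+\xi^-$ with $\xi^\pm\in\mathcal{M}^+(\partial\Omega)$, the equality
\begin{equation*}
\int_{\Omega\times\Omega}c\,\mathrm{d}\pi+\int_{\partial\Omega}\psi^+\,\mathrm{d}\xi^+-\int_{\partial\Omega}\psi^-\,\mathrm{d}\xi^-=\int_\Omega\varphi^-\,\mathrm{d}\mu^--\int_\Omega\varphi^+\,\mathrm{d}\mu^+
\end{equation*}
holds. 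I would then rewrite the right-hand side using $\mu^+=\pi_x-\xi^+$ and $\mu^-=\pi_y-\xi^-$, obtaining
\begin{equation*}
\int_\Omega\varphi^-\,\mathrm{d}\pi_y-\int_\Omega\varphi^+\,\mathrm{d}\pi_x-\int_{\partial\Omega}\varphi^-\,\mathrm{d}\xi^-+\int_{\partial\Omega}\varphi^+\,\mathrm{d}\xi^+,
\end{equation*}
and since $\int_\Omega\varphi^-\,\mathrm{d}\pi_y-\int_\Omega\varphi^+\,\mathrm{d}\pi_x=\int_{\Omega\times\Omega}(\varphi^-(y)-\varphi^+(x))\,\mathrm{d}\pi(x,y)$, the optimality identity becomes
\begin{equation*}
\int_{\Omega\times\Omega}\bigl(c(x,y)-\varphi^-(y)+\varphi^+(x)\bigr)\,\mathrm{d}\pi+\int_{\partial\Omega}(\psi^+-\varphi^+)\,\mathrm{d}\xi^++\int_{\partial\Omega}(\varphi^--\psi^-)\,\mathrm{d}\xi^-=0.
\end{equation*}

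The key point is that each of the three integrands is nonnegative: the first because the dual constraint $(-\varphi^+)\oplus\varphi^-\le c$ gives $c(x,y)-\varphi^-(y)+\varphi^+(x)\ge0$ everywhere on $\Omega\times\Omega$; the second because the boundary constraint $\varphi^+\le\psi^+$ on $\partial\Omega$ gives $\psi^+-\varphi^+\ge0$; and the third because $\psi^-\le\varphi^-$ on $\partial\Omega$ gives $\varphi^--\psi^-\ge0$. Since the sum of three integrals of nonnegative functions against positive measures vanishes, each integral vanishes individually, so in particular
\begin{equation*}
\int_{\partial\Omega}(\psi^+-\varphi^+)\,\mathrm{d}\xi^+=0,\qquad\int_{\partial\Omega}(\varphi^--\psi^-)\,\mathrm{d}\xi^-=0.
\end{equation*}
As $\psi^+-\varphi^+$ and $\varphi^--\psi^-$ are continuous and nonnegative on $\partial\Omega$, each must vanish $\xi^\pm$-a.e., hence on the (closed) support of the respective measure, which yields $\varphi^+=\psi^+$ on $\spt(\xi^+)$ and $\varphi^-=\psi^-$ on $\spt(\xi^-)$.

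The only delicate points are bookkeeping ones: I must be careful that the marginals $\pi_x,\pi_y$ are taken on all of $\Omega$ (not just the interior) so that the measures $\xi^\pm$ are exactly the boundary excess, and that the dual maximizer indeed satisfies all four constraints of \eqref{dual with boundary costs}, including $\psi^-\le\varphi^-$ and $\varphi^+\le\psi^+$ on $\partial\Omega$, which is built into the admissible set. There is no real analytic obstacle here — the whole argument is the standard Kantorovich complementary-slackness computation adapted to the two extra boundary terms — so the main thing to get right is simply the algebraic rearrangement of the duality identity into a sum of three manifestly nonnegative contributions. One should also note that if $\xi^+$ (resp. $\xi^-$) is the zero measure the corresponding conclusion is vacuous, which is consistent with the statement.
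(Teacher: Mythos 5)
Your proof is correct, and it is a genuinely more direct route than the one in the paper. The paper first observes that $\pi$ is optimal for the classical Monge--Kantorovich problem with the fixed marginals $\mu^++\xi^+$ and $\mu^-+\xi^-$, introduces an optimal pair $(\phi^+,\phi^-)$ for the dual of that auxiliary problem, and then chains the inequality $\int\varphi^-\,\mathrm{d}[\mu^-+\xi^-]-\int\varphi^+\,\mathrm{d}[\mu^++\xi^+]\le\int\phi^-\,\mathrm{d}[\mu^-+\xi^-]-\int\phi^+\,\mathrm{d}[\mu^++\xi^+]=\int c\,\mathrm{d}\pi$ with the boundary constraints and the duality ${\mathcal T}_1={\mathcal D}_1$ to force equalities. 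You dispense with the auxiliary problem and its dual maximizer entirely: you integrate the pointwise dual constraints $(-\varphi^+)\oplus\varphi^-\le c$, $\varphi^+\le\psi^+$, $\psi^-\le\varphi^-$ directly against $\pi$, $\xi^+$, $\xi^-$ and rewrite the strong-duality identity as a vanishing sum of three nonnegative terms. This buys a slightly cleaner argument, avoids invoking existence of optimal potentials for the auxiliary problem, and yields as a bonus the extra slackness $\int(c(x,y)-\varphi^-(y)+\varphi^+(x))\,\mathrm{d}\pi=0$, i.e.\ that $(\varphi^+,\varphi^-)$ is itself an optimal classical pair for the marginals $\mu^\pm+\xi^\pm$ --- a fact the paper only records as a parenthetical remark at the end of its proof. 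The bookkeeping point you flag (identifying $\int_{\partial\Omega}\psi^\pm\,\mathrm{d}\pi_{x/y}$ with $\int_{\partial\Omega}\psi^\pm\,\mathrm{d}\xi^\pm$, i.e.\ that $\mu^\pm$ carry no boundary mass) is exactly the convention implicit in the paper's statement of the decomposition $\pi_x=\mu^++\xi^+$, $\pi_y=\mu^-+\xi^-$, so it is not a gap.
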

\begin{proof}
First, it is clear that $\pi$ solves 
\begin{equation} \label{Kantorovich with optimal boundary parts}
\min \left\{\int_{\Omega\times\Omega} c(x,y)\,\mathrm{d}\Lambda
\,:\,\Lambda \in \mathcal{P}(\mu^{+}+\xi^+,\mu^{-}+\xi^-)\right\}.
\end{equation}
Now, let $(\phi^+,\phi^-)$ be a maximizer for the dual problem of \eqref{Kantorovich with optimal boundary parts}:
\begin{equation} \label{dual with optimal boundary parts}
\sup\bigg\{\int_\Omega \phi^-\,\mathrm{d}[\mu^- + \xi^-] - \int_\Omega \phi^+\,\mathrm{d}[\mu^+ + \xi^+]\,\,:\,\,\phi^\pm \in C(\Omega),\,(- \phi^+) \oplus \phi^- \leq c\bigg\}.
\end{equation}
We have 
$$\int_\Omega \varphi^-\,\mathrm{d}[\mu^- + \xi^-] - \int_\Omega \varphi^+\,\mathrm{d}[\mu^+ + \xi^+] \leq \int_\Omega \phi^-\,\mathrm{d}[\mu^- + \xi^-] - \int_\Omega \phi^+\,\mathrm{d}[\mu^+ + \xi^+].$$
But, $\psi^- \leq \varphi^- \leq \varphi^+ \leq \psi^+$ on $\partial\Omega$. Then, we get
\begin{multline*}
\int_\Omega \varphi^-\,\mathrm{d}\mu^- + \int_\Omega \psi^-\,\mathrm{d}\xi^- - \int_\Omega \varphi^+\,\mathrm{d}\mu^+ - \int_\Omega \psi^+\,\mathrm{d}\xi^+ \\
\leq  \int_\Omega \phi^-\,\mathrm{d}[\mu^- + \xi^-] - \int_\Omega \phi^+\,\mathrm{d}[\mu^+ + \xi^+]=\int_{\Omega\times\Omega} c(x,y)\,\mathrm{d}\pi.
\end{multline*}
From the duality result ${\mathcal T}_1 (\mu^+, \mu^-; \psi^+, \psi^-)={\mathcal D}_1 (\mu^+, \mu^-; \psi^+, \psi^-)$ (see Proposition \ref{existence of a minimizer}), we conclude the proof (we note that this shows at the same time that the pair $(\varphi^+,\varphi^-)$ solves \eqref{dual with optimal boundary parts}). 
\end{proof}
\section{Eulerian formulation for transports involving tariffs}
\label{Sec. 4}
In this section, our aim is to find an equivalent Eulerian formulation for \eqref{Problem 1}.   We shall assume that all assumptions (H0)-(H6) hold and so, by Section \ref{Sec. 2}, for every $(x,y) \in \Omega \times \Omega$, we can associate an optimal trajectory $\gamma^{x,y}$, an optimal control $u^{x,y}$, and an end time $\tau^{x,y}$ to get from $x$ to $y$, in such a way that the three maps $(x,y) \mapsto \gamma^{x,y}$, $(x,y) \mapsto u^{x,y}$ and $(x,y) \mapsto \tau^{x,y}$ are  chosen to be measurable. 
We first introduce the following:
\begin{definition}
Say that $\rho: \mathbb{R}^+ \mapsto \mathcal{M}^+(\mathbb{R}^d \times U)$ is a density process and \,$\eta \in \mathcal{M}^+(\mathbb{R}^+ \times \mathbb{R}^d)$ is a stopping distribution between two given finite positive measures $\chi^+$ and $\chi^-$ on $\mathbb{R}^d$, if they satisfy the following:
\begin{equation} \label{stopping time}
\int_{\mathbb{R}^d} \int_{\mathbb{R}^+}\psi(x)\,\mathrm{d}\eta(t,x)=\int_{\mathbb{R}^d} \psi(x)\,\mathrm{d}\chi^+(x),\,\,\mbox{for all}\,\,\,\psi \in C_0(\mathbb{R}^d),
\end{equation}
and 
\begin{eqnarray} \label{process}
&&\int_{\mathbb{R}^+} \int_U \int_{\mathbb{R}^d} \left[\partial_t \xi(t,x) + k(t,x,u) \cdot \nabla \xi(t,x)\right]\,\mathrm{d}\rho_t(x,u)\,\mathrm{d}t\\
&&\qquad \qquad\qquad=\,\,
 \int_{\mathbb{R}^+}\int_{\mathbb{R}^d} \xi(t,x)\,\mathrm{d}\eta(t,x)- \int_{\mathbb{R}^d} \xi(0,x)\,\mathrm{d}\chi^-(x),\,\,\mbox{for all}\,\,\,\xi \in C_0^1(\mathbb{R}^+ \times{\mathbb{R}^d}).\notag
 \end{eqnarray}\\
The set of such pairs $(\rho,\eta)$ will be denoted by $\mathcal{E}(\chi^+,\chi^-)$.
\end{definition}
Consider now the transport subproblems \eqref{(P1)}, \eqref{(P2)} \& \eqref{(P3)}. Our goal is to construct, for each of these subproblems, an admissible pair of density process and stopping distributions $(\rho^{ii},\eta^{ii}) \in \mathcal{E}(\mu^+ - \nu^+,\mu^- - \nu^-)$ (resp. $(\rho^{ib},\eta^{ib}) \in \mathcal{E}(\nu^+,T^{ib}_{\#}\nu^+)$ and $(\rho^{bi},\eta^{bi}) \in \mathcal{E}(T^{bi}_{\#}\nu^-,\nu^-)$) from the optimal transport plan $\pi^{ii}$ (resp. $\pi^{ib}$ and $\pi^{bi}$). More precisely, we have the following:
\begin{lemma} \label{Eulerian ii}
There exists an admissible pair $(\rho^{ii},\eta^{ii}) \in \mathcal{E}(\mu^+ - \nu^+,\mu^- - \nu^-)$ such that $$\int_{\mathbb{R}^+} \int_U \int_{\mathbb{R}^d} L(t,x,u)\,\mathrm{d}\rho_t^{ii}(x,u)\,\mathrm{d}t=\int_{\Omega \times \Omega}c(x,y)\,\mathrm{d}\pi^{ii}(x,y).$$
\end{lemma}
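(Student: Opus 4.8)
The plan is to build the pair $(\rho^{ii},\eta^{ii})$ directly from the optimal plan $\pi^{ii}$ by ``superposing'' the optimal trajectories associated with each pair $(x,y)$ in the support of $\pi^{ii}$. Recall from Section \ref{Sec. 2} that, via the Borel selector $\Gamma^s$, to $\pi^{ii}$-a.e.\ $(x,y)\in\Omega^\circ\times\Omega^\circ$ we have measurably assigned an optimal trajectory $\gamma^{x,y}$, an optimal control $u^{x,y}$, and an end time $\tau^{x,y}$, with $\gamma^{x,y}(0)=x$, $\gamma^{x,y}(\tau^{x,y})=y$, and $\int_0^{\tau^{x,y}}L(t,\gamma^{x,y}(t),u^{x,y}(t))\,\mathrm dt=c(x,y)$. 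Then I would define, for $t\ge 0$,
\begin{equation*}
\rho^{ii}_t:=(e_t)_\#\big(\mathbf 1_{\{t\le\tau^{x,y}\}}\,\pi^{ii}\big),\qquad e_t(x,y):=\big(\gamma^{x,y}(t),u^{x,y}(t)\big)\in\mathbb R^d\times U,
\end{equation*}
so that $\rho^{ii}_t$ lives on $\mathbb R^d\times U$ and has total mass $\pi^{ii}(\{\tau^{x,y}\ge t\})$, and
\begin{equation*}
\eta^{ii}:=(s)_\#\,\pi^{ii},\qquad s(x,y):=\big(\tau^{x,y},y\big)\in\mathbb R^+\times\mathbb R^d,
\end{equation*}
which records ``when and where'' each trajectory stops. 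Measurability of $(x,y)\mapsto\gamma^{x,y},u^{x,y},\tau^{x,y}$ (Corollary \ref{closed graph}) plus the uniform bound $\tau^{x,y}\le C\,\mathrm{diam}(\Omega)$ from Lemma \ref{admissible trajectory} and (H4) guarantees all these pushforwards are well-defined finite positive measures.

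Next I would verify that $(\rho^{ii},\eta^{ii})\in\mathcal E(\mu^+-\nu^+,\mu^--\nu^-)$, i.e.\ that \eqref{stopping time} and \eqref{process} hold. For \eqref{stopping time}: for $\psi\in C_0(\mathbb R^d)$,
\begin{equation*}
\int_{\mathbb R^+}\!\int_{\mathbb R^d}\psi(x)\,\mathrm d\eta^{ii}(t,x)=\int_{\Omega\times\Omega}\psi\big(\gamma^{x,y}(\tau^{x,y})\big)\,\mathrm d\pi^{ii}=\int_{\Omega\times\Omega}\psi(y)\,\mathrm d\pi^{ii}=\int_{\mathbb R^d}\psi\,\mathrm d(\mu^--\nu^-),
\end{equation*}
since $\pi^{ii}=\pi_{|\Omega^\circ\times\Omega^\circ}$ has second marginal $\mu^--\nu^-$ by construction of $\nu^-$. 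For \eqref{process}: fix $\xi\in C_0^1(\mathbb R^+\times\mathbb R^d)$ and compute, along each fixed trajectory, $\frac{\mathrm d}{\mathrm dt}\xi(t,\gamma^{x,y}(t))=\partial_t\xi(t,\gamma^{x,y}(t))+\dot\gamma^{x,y}(t)\cdot\nabla\xi(t,\gamma^{x,y}(t))=\partial_t\xi+k(t,\gamma^{x,y}(t),u^{x,y}(t))\cdot\nabla\xi$ for a.e.\ $t$, using the state equation \eqref{control system}. Integrating in $t$ from $0$ to $\tau^{x,y}$ gives $\xi(\tau^{x,y},y)-\xi(0,x)=\int_0^{\tau^{x,y}}[\partial_t\xi+k\cdot\nabla\xi]\,\mathrm dt$; then integrating this identity against $\pi^{ii}$ and using Fubini together with the definitions of $\rho^{ii}$ and $\eta^{ii}$ yields exactly \eqref{process}, with $\chi^-=\mu^+-\nu^+$ appearing on the right because the first marginal of $\pi^{ii}$ is $\mu^+-\nu^+$ and $\gamma^{x,y}(0)=x$. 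The cost identity is then immediate: by the same trajectory-wise integration and Fubini,
\begin{equation*}
\int_{\mathbb R^+}\!\int_U\!\int_{\mathbb R^d}L(t,x,u)\,\mathrm d\rho^{ii}_t(x,u)\,\mathrm dt=\int_{\Omega\times\Omega}\!\Big(\int_0^{\tau^{x,y}}\!L(t,\gamma^{x,y}(t),u^{x,y}(t))\,\mathrm dt\Big)\mathrm d\pi^{ii}=\int_{\Omega\times\Omega}c(x,y)\,\mathrm d\pi^{ii}.
\end{equation*}

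The main obstacle I anticipate is the measurability and integrability bookkeeping needed to justify the Fubini interchanges: one must check that $(t,x,y)\mapsto\xi(t,\gamma^{x,y}(t))$, $k(t,\gamma^{x,y}(t),u^{x,y}(t))$, and $L(t,\gamma^{x,y}(t),u^{x,y}(t))$ are jointly measurable (which follows from the measurable selection of $\Gamma^s$ in Corollary \ref{closed graph}, continuity of $\xi$, $k$, $L$, and continuity of $t\mapsto\gamma^{x,y}(t)$), and that everything is dominated --- here (H1), (H4), and the uniform end-time bound $\tau^{x,y}\le C\,\mathrm{diam}(\Omega)$ from Lemma \ref{admissible trajectory} do the job, since $L$ is bounded by $\beta_1$ and the time interval is uniformly bounded, so $\rho^{ii}$ has finite total mass over $[0,\infty)\times\mathbb R^d\times U$. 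A secondary point is to confirm the a.e.\ differentiability of $t\mapsto\xi(t,\gamma^{x,y}(t))$ and validity of the fundamental theorem of calculus along $\gamma^{x,y}$; this holds because $\gamma^{x,y}$ is absolutely continuous (it solves \eqref{control system} with bounded right-hand side) and $\xi\in C^1$. Once these technicalities are in place, the construction above delivers the claimed admissible pair with matching cost.
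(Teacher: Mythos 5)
Your proposal is correct and follows essentially the same route as the paper: it defines $\eta^{ii}$ as the pushforward of $\pi^{ii}$ under $(x,y)\mapsto(\tau^{x,y},y)$ and $\rho^{ii}_t$ as the pushforward of $\mathbf 1_{\{t\le\tau^{x,y}\}}\pi^{ii}$ under $(x,y)\mapsto(\gamma^{x,y}(t),u^{x,y}(t))$, then verifies \eqref{stopping time}, \eqref{process} and the cost identity by integrating along trajectories exactly as in the paper's proof. The additional remarks on measurability (via Corollary \ref{closed graph}) and domination (via (H1), (H4) and Lemma \ref{admissible trajectory}) are sound and only make explicit what the paper leaves implicit.
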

\begin{proof}
Let $\pi^{ii}$ be an optimal transport plan between $\mu^+ - \nu^+$ and $\mu^- - \nu^-$ and define the pair $(\rho^{ii},\eta^{ii})$ as follows:
$$\int_{\mathbb{R}^+}\int_{\mathbb{R}^d} \xi(t,x)\,\mathrm{d}\eta^{ii}(t,x)=\int_{\Omega \times \Omega} \xi(\tau^{x,y},y)\,\mathrm{d}\pi^{ii}(x,y),\,\,\mbox{for all}\,\,\,\xi \in C_0(\mathbb{R}^+ \times \mathbb{R}^d),$$
and  for all $\zeta \in C_0(\mathbb{R}^d \times U)$, 
$$\int_U \int_{\mathbb{R}^d} \zeta(x,u)\,\mathrm{d}\rho_t^{ii}(x,u)=\int_{\{(x,y) \in \Omega \times \Omega \,\,:\,\,t \leq \tau^{x,y}\}} \zeta(\gamma^{x,y}(t),u^{x,y}(t))\,\mathrm{d}\pi^{ii}(x,y).
$$
One can easily check that the pair $(\rho^{ii},\eta^{ii}) \in \mathcal{E}(\mu^+ - \nu^+,\mu^- - \nu^-)$.
Indeed, for every $\psi \in C_0(\mathbb{R}^d)$\, and \,$\xi \in C_0^1(\mathbb{R}^+ \times \mathbb{R}^d)$, we have
\begin{equation*}
\int_{\mathbb{R}^+}\int_{\mathbb{R}^d}\psi(x)\,\mathrm{d}\eta^{ii}(t,x)=\int_{\Omega \times \Omega} \psi(y)\,\mathrm{d}\pi^{ii}(x,y)=\int_{\Omega} \psi(y)\,\mathrm{d}(\mu^- - \nu^-)(y),
\end{equation*}
and 
$$\int_{\mathbb{R}^+} \int_U \int_{\mathbb{R}^d} \left[\partial_t \xi(t,x) + k(t,x,u) \cdot \nabla \xi(t,x)\right]\,\mathrm{d}\rho_t^{ii}(x,u)\,\mathrm{d}t$$
$$=\int_{\Omega \times \Omega}\int_{0}^{\tau^{x,y}} \left[\partial_t \xi(t,\gamma^{x,y}(t)) + k(t,\gamma^{x,y}(t),u^{x,y}(t)) \cdot \nabla \xi(t,\gamma^{x,y}(t))\right]\,\mathrm{d}t\,\mathrm{d}\pi^{ii}(x,y)$$
$$=\int_{\Omega \times \Omega}\int_{0}^{\tau^{x,y}} \partial_t \left[\xi(t,\gamma^{x,y}(t))\right]\,\mathrm{d}t\,\mathrm{d}\pi^{ii}(x,y) =\int_{\Omega \times \Omega}\xi(\tau^{x,y},y)\,\mathrm{d}\pi^{ii}(x,y)- \int_{\Omega} \xi(0,x)\,\mathrm{d}(\mu^+ - \nu^+)(x)$$
$$= \int_{\mathbb{R}^+}\int_{\mathbb{R}^d} \xi(t,x)\,\mathrm{d}\eta^{ii}(t,x)- \int_{\Omega} \xi(0,x)\,\mathrm{d}(\mu^+ - \nu^+)(x).$$
Moreover, we have 
\begin{eqnarray*}
\int_{\mathbb{R}^+} \int_U \int_{\mathbb{R}^d} L(t,x,u)\,\mathrm{d}\rho_t^{ii}(x,u)\,\mathrm{d}t
&=&\int_{\Omega \times \Omega}\int_{0}^{\tau^{x,y}} L(t,\gamma^{x,y}(t),u^{x,y}(t))\,\mathrm{d}t\,\mathrm{d}\pi^{ii}(x,y)=\int_{\Omega \times \Omega}c\,\mathrm{d}\pi^{ii}. 
\end{eqnarray*}
\end{proof}
\begin{lemma} \label{Eulerian ib}
There exists an admissible pair $(\rho^{ib},\eta^{ib}) \in \mathcal{E}(\nu^+,T^{ib}_{\#}\nu^+)$ such that the following holds 
\begin{multline*}
\int_{\mathbb{R}^+} \int_U \int_{\mathbb{R}^d} L(t,x,u)\,\mathrm{d}\rho_t^{ib}(x,u)\,\mathrm{d}t - \int_{\partial\Omega}\int_{\mathbb{R}^+} \psi^-(x)\,\mathrm{d}\eta^{ib}(t,x)\\
=\int_{\Omega \times \Omega}c(x,y)\,\mathrm{d}\pi^{ib}(x,y) -  \int_{\partial\Omega} \psi^-(y)\,\mathrm{d}\left[T^{ib}_{\#}\nu^+\right](y).
\end{multline*}
\end{lemma}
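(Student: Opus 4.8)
The plan is to mimic the construction already carried out in Lemma \ref{Eulerian ii}, but adapted to the interior-to-boundary transport plan $\pi^{ib}=(\mathrm{Id},T^{ib})_{\#}\nu^+$. First I would define the candidate pair $(\rho^{ib},\eta^{ib})$ by pushing forward the optimal trajectories: for all $\xi\in C_0(\mathbb{R}^+\times\mathbb{R}^d)$ set
\begin{equation*}
\int_{\mathbb{R}^+}\int_{\mathbb{R}^d}\xi(t,x)\,\mathrm{d}\eta^{ib}(t,x)=\int_{\Omega\times\Omega}\xi(\tau^{x,y},y)\,\mathrm{d}\pi^{ib}(x,y),
\end{equation*}
and for all $\zeta\in C_0(\mathbb{R}^d\times U)$ set
\begin{equation*}
\int_U\int_{\mathbb{R}^d}\zeta(x,u)\,\mathrm{d}\rho_t^{ib}(x,u)=\int_{\{(x,y)\in\Omega\times\Omega\,:\,t\leq\tau^{x,y}\}}\zeta(\gamma^{x,y}(t),u^{x,y}(t))\,\mathrm{d}\pi^{ib}(x,y).
\end{equation*}
Since $(x,y)\mapsto(\gamma^{x,y},u^{x,y},\tau^{x,y})$ is the Borel selector $\Gamma^s$ from Corollary \ref{closed graph}, these formulas define genuine Borel measures, and the boundedness assumption (H1) together with Lemma \ref{admissible trajectory} (which bounds $\tau^{x,y}$ by $C\,\mathrm{diam}(\Omega)$ on the compact domain) guarantees $\rho^{ib}$ is supported in a compact time interval, so all integrals are finite.

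Next I would verify that $(\rho^{ib},\eta^{ib})\in\mathcal{E}(\nu^+,T^{ib}_{\#}\nu^+)$, i.e.\ that equations \eqref{stopping time} and \eqref{process} hold with $\chi^+=\nu^+$ (wait, one must be careful with which measure plays which role — checking the definition, $\eta$ has spatial marginal $\chi^+$ and the Cauchy datum of the continuity equation is $\chi^-$, so here $\chi^+=T^{ib}_{\#}\nu^+$ and $\chi^-=\nu^+$). For \eqref{stopping time}: the $y$-marginal of $\pi^{ib}$ is $T^{ib}_{\#}\nu^+$, and $\int\xi(\tau^{x,y},y)\,\mathrm{d}\pi^{ib}$ integrated against $\psi(x)\mapsto\psi(y)$ collapses to $\int\psi\,\mathrm{d}(T^{ib}_{\#}\nu^+)$. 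For \eqref{process}: the computation is identical to the one displayed in the proof of Lemma \ref{Eulerian ii} — write $\partial_t\xi+k\cdot\nabla\xi$ along the trajectory as $\tfrac{\mathrm{d}}{\mathrm{d}t}[\xi(t,\gamma^{x,y}(t))]$ using $\dot\gamma^{x,y}=k(t,\gamma^{x,y},u^{x,y})$, integrate from $0$ to $\tau^{x,y}$ by the fundamental theorem of calculus to get $\xi(\tau^{x,y},y)-\xi(0,x)$, and recognize the two terms as $\int\xi\,\mathrm{d}\eta^{ib}$ and $\int\xi(0,\cdot)\,\mathrm{d}\nu^+$ respectively (the $x$-marginal of $\pi^{ib}$ being $\nu^+$). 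Finally, for the cost identity, the same change of variables gives
\begin{equation*}
\int_{\mathbb{R}^+}\int_U\int_{\mathbb{R}^d}L(t,x,u)\,\mathrm{d}\rho_t^{ib}(x,u)\,\mathrm{d}t=\int_{\Omega\times\Omega}\int_0^{\tau^{x,y}}L(t,\gamma^{x,y}(t),u^{x,y}(t))\,\mathrm{d}t\,\mathrm{d}\pi^{ib}(x,y)=\int_{\Omega\times\Omega}c(x,y)\,\mathrm{d}\pi^{ib}(x,y),
\end{equation*}
because $\gamma^{x,y}$ is an \emph{optimal} trajectory, so its running cost equals $c(x,y)$ by definition of $c$ and Proposition \ref{PropExistOptim}.

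The remaining piece — which is really just bookkeeping — is the boundary term: since $\pi^{ib}=(\mathrm{Id},T^{ib})_{\#}\nu^+$ is concentrated on pairs $(x,T^{ib}(x))$ with $T^{ib}(x)\in\partial\Omega$, and $\eta^{ib}$ is the pushforward of $\pi^{ib}$ under $(x,y)\mapsto(\tau^{x,y},y)$, the spatial projection of $\eta^{ib}$ is exactly $T^{ib}_{\#}\nu^+$, which lives on $\partial\Omega$. Hence $\int_{\partial\Omega}\int_{\mathbb{R}^+}\psi^-(x)\,\mathrm{d}\eta^{ib}(t,x)=\int_{\partial\Omega}\psi^-(y)\,\mathrm{d}(T^{ib}_{\#}\nu^+)(y)$, and subtracting this identity from the cost identity yields the claimed equation. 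I do not anticipate a genuine obstacle here: the only subtle point is correctly matching the roles of $\chi^+$ and $\chi^-$ in the definition of $\mathcal{E}(\cdot,\cdot)$ against the direction of transport (interior source, boundary sink), and making sure the selector measurability from Corollary \ref{closed graph} is invoked so the pushforward measures are well-defined; everything else is a verbatim repetition of Lemma \ref{Eulerian ii} with $\pi^{ii}$ replaced by $\pi^{ib}$ and an extra $\psi^-$ term tracked through.
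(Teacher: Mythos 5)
Your proposal is correct and follows essentially the same route as the paper: the paper defines $(\rho^{ib},\eta^{ib})$ by pushing the optimal trajectories forward along $\nu^+$ and $T^{ib}$, which is exactly your construction once one uses $\pi^{ib}=(\mathrm{Id},T^{ib})_{\#}\nu^+$, and the verification of \eqref{stopping time}, \eqref{process} and the cost identity is the same computation. Your parenthetical about the roles of $\chi^+$ and $\chi^-$ flags what is in fact a labelling slip in the paper's definition of $\mathcal{E}(\chi^+,\chi^-)$ (its usage, e.g.\ in Lemma \ref{Eulerian ii} and in $\mathcal{E}_0(\mu^+,\mu^-)$, takes the first argument as the initial datum and the second as the stopping marginal), and your constructed pair matches the intended reading $\mathcal{E}(\nu^+,T^{ib}_{\#}\nu^+)$.
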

\begin{proof}
Define the pair consisting of a density process and a stopping distribution $(\rho^{ib},\eta^{ib}) \in \mathcal{E}(\nu^+,T^{ib}_{\#}\nu^+)$ as follows:
$$\int_{\mathbb{R}^+}\int_{\mathbb{R}^d} \xi(t,x)\,\mathrm{d}\eta^{ib}(t,x)=\int_{\Omega} \xi(\tau^{x,T^{ib}(x)},T^{ib}(x))\,\mathrm{d}\nu^+(x),\,\,\mbox{for all}\,\,\,\xi \in C_0(\mathbb{R}^+ \times \mathbb{R}^d),$$\\
while, for all $\zeta \in C_0(\mathbb{R}^d \times U)$, 

$$\int_U \int_{\mathbb{R}^d} \zeta(x,u)\,\mathrm{d}\rho_t^{ib}(x,u)=\int_{\{x \in \Omega\,\,:\,\,t\leq \tau^{x,T^{ib}(x)}\}} \zeta(\gamma^{x,T^{ib}(x)}(t),u^{x,T^{ib}(x)}(t))\,\mathrm{d}\nu^+(x).
$$
We have, for every $\psi \in C_0(\mathbb{R}^d)$,

\begin{equation*}
\int_{\mathbb{R}^+}\int_{\mathbb{R}^d} \psi(x)\,\mathrm{d}\eta^{ib}(t,x)=\int_{\Omega} \psi(T^{ib}(x))\,\mathrm{d}\nu^+(x)=\int_{\Omega} \psi(y)\,\mathrm{d}\left[T^{ib}_{\#}\nu^+\right](y),
\end{equation*}\\
and, for every $\xi \in C_0^1(\mathbb{R}^+ \times \mathbb{R}^d)$, 
$$\int_{\mathbb{R}^+} \int_U \int_{\mathbb{R}^d} \left[\partial_t \xi(t,x) + k(t,x,u) \cdot \nabla \xi(t,x)\right]\,\mathrm{d}\rho_t^{ib}(x,u)\,\mathrm{d}t$$
$$=\int_{\Omega}\int_{0}^{\tau^{x,T^{ib}(x)}} \bigg(\partial_t \xi(t,\gamma^{x,T^{ib}(x)}(t)) + k(t,\gamma^{x,T^{ib}(x)}(t),u^{x,T^{ib}(x)}(t)) \cdot \nabla \xi(t,\gamma^{x,T^{ib}(x)}(t))\bigg)\,\mathrm{d}t\,\mathrm{d}\nu^+$$
$$=\int_{\mathbb{R}^+}\int_{\mathbb{R}^d} \xi(t,x)\,\mathrm{d}\eta^{ib}(t,x)- \int_{\Omega} \xi(0,x)\,\mathrm{d}\nu^+(x).$$\\
On the other hand, we have 
\begin{eqnarray*}
&&\int_{\mathbb{R}^+} \int_U \int_{\mathbb{R}^d} L(t,x,u)\,\mathrm{d}\rho_t^{ib}(x,u)\,\mathrm{d}t - \int_{\partial\Omega}\int_{\mathbb{R}^+} \psi^-(x)\,\mathrm{d}\eta^{ib}(t,x)\\
&&\qquad\qquad=\,\,\int_{\Omega}\int_{0}^{\tau^{x,T^{ib}(x)}} L(t,\gamma^{x,T^{ib}(x)}(t),u^{x,T^{ib}(x)}(t))\,\mathrm{d}t\,\mathrm{d}\nu^+(x) - \int_{\partial\Omega} \psi^-(y)\,\mathrm{d}\left[T^{ib}_{\#}\nu^+\right](y)\\\\
&&\qquad\qquad=\,\,\int_{\Omega}c(x,T^{ib}(x))\,\mathrm{d}\nu^+(x) - \int_{\partial\Omega} \psi^-(y)\,\mathrm{d}\left[T^{ib}_{\#}\nu^+\right](y). 
\end{eqnarray*}
This completes the proof. $\qedhere$
\end{proof}
\begin{lemma} \label{Eulerian bi}
There exists an admissible pair $(\rho^{bi},\eta^{bi}) \in \mathcal{E}(T^{bi}_{\#}\nu^-,\nu^-)$ such that we have
\begin{multline*}
\int_{\mathbb{R}^+} \int_U \int_{\mathbb{R}^d} L(t,x,u)\,\mathrm{d}\rho_t^{bi}(x,u)\,\mathrm{d}t + \int_{\partial\Omega}\int_{U} \psi^+(x)\,\mathrm{d}\rho_0^{bi}(x,u)\\
 =\,\,\,\int_{\Omega \times \Omega}c(x,y)\,\mathrm{d}\pi^{bi}(x,y) +  \int_{\partial\Omega} \psi^+(x)\,\mathrm{d}\left[T^{bi}_{\#}\nu^-\right](x).
\end{multline*}
\end{lemma}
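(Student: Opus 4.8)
The plan is to reproduce, with the roles of interior and boundary exchanged, the explicit construction carried out in Lemmas \ref{Eulerian ii} and \ref{Eulerian ib}. Recall from the preceding proposition that the optimal boundary-to-interior plan has the product form $\pi^{bi}=(T^{bi},\mathrm{Id})_{\#}\nu^-$, where $T^{bi}\colon\Omega\to\partial\Omega$ is a Borel selector of $\mathrm{argmin}\{c(x,y)+\psi^+(x):x\in\partial\Omega\}$. For $y\in\Omega$ set $x(y):=T^{bi}(y)$ and let $(\gamma^{x(y),y},u^{x(y),y},\tau^{x(y),y})$ be the optimal triple supplied by Corollary \ref{closed graph}; composing the Borel selector $\Gamma^s$ with the Borel map $y\mapsto(T^{bi}(y),y)$ shows that these objects depend measurably on $y$. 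I then define $\eta^{bi}$ and $\rho^{bi}$ by
\[
\int_{\mathbb{R}^+}\!\!\int_{\mathbb{R}^d}\xi(t,x)\,\mathrm{d}\eta^{bi}(t,x)=\int_{\Omega}\xi\big(\tau^{x(y),y},y\big)\,\mathrm{d}\nu^-(y),\qquad\xi\in C_0(\mathbb{R}^+\times\mathbb{R}^d),
\]
and, for $\zeta\in C_0(\mathbb{R}^d\times U)$,
\[
\int_U\!\!\int_{\mathbb{R}^d}\zeta(x,u)\,\mathrm{d}\rho_t^{bi}(x,u)=\int_{\{y\in\Omega\,:\,t\le\tau^{x(y),y}\}}\zeta\big(\gamma^{x(y),y}(t),u^{x(y),y}(t)\big)\,\mathrm{d}\nu^-(y).
\]

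The next step is to check that $(\rho^{bi},\eta^{bi})\in\mathcal{E}(T^{bi}_{\#}\nu^-,\nu^-)$. Since $\gamma^{x(y),y}(\tau^{x(y),y})=y$, condition \eqref{stopping time} for $\eta^{bi}$ holds with marginal $\nu^-$. For the continuity equation \eqref{process} one uses (H1): each trajectory $\gamma^{x(y),y}$ is Lipschitz on $[0,\tau^{x(y),y}]$, so for $\xi\in C_0^1(\mathbb{R}^+\times\mathbb{R}^d)$ the map $t\mapsto\xi(t,\gamma^{x(y),y}(t))$ is absolutely continuous with
\[
\int_0^{\tau^{x(y),y}}\!\big[\partial_t\xi+k\cdot\nabla\xi\big]\big(t,\gamma^{x(y),y}(t),u^{x(y),y}(t)\big)\,\mathrm{d}t=\xi\big(\tau^{x(y),y},y\big)-\xi\big(0,x(y)\big).
\]
Integrating this identity against $\nu^-(\mathrm{d}y)$ and using $x(y)=T^{bi}(y)$ gives exactly \eqref{process} with initial datum $T^{bi}_{\#}\nu^-$, precisely as in the proof of Lemma \ref{Eulerian ib}.

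It remains to read off the cost identity. Since $u^{x(y),y}$ is an optimal control from $T^{bi}(y)$ to $y$,
\[
\int_{\mathbb{R}^+}\!\!\int_U\!\!\int_{\mathbb{R}^d}L(t,x,u)\,\mathrm{d}\rho_t^{bi}(x,u)\,\mathrm{d}t=\int_{\Omega}\Big(\int_0^{\tau^{x(y),y}}\!\!L\big(t,\gamma^{x(y),y}(t),u^{x(y),y}(t)\big)\,\mathrm{d}t\Big)\mathrm{d}\nu^-(y)=\int_{\Omega}c\big(T^{bi}(y),y\big)\,\mathrm{d}\nu^-(y),
\]
and the right-hand side equals $\int_{\Omega\times\Omega}c\,\mathrm{d}\pi^{bi}$ because $\pi^{bi}=(T^{bi},\mathrm{Id})_{\#}\nu^-$. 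Evaluating the defining formula for $\rho^{bi}_t$ at $t=0$ (where $\{y:0\le\tau^{x(y),y}\}=\Omega$) shows that $\rho_0^{bi}$ is concentrated on $\partial\Omega\times U$ and $\int_U\int_{\mathbb{R}^d}\zeta(x,u)\,\mathrm{d}\rho_0^{bi}(x,u)=\int_\Omega\zeta\big(T^{bi}(y),u^{x(y),y}(0)\big)\,\mathrm{d}\nu^-(y)$; choosing $\zeta(x,u)=\psi^+(x)$ (legitimate as $\psi^+\in C(\partial\Omega)$ and $\rho_0^{bi}$ lives on $\partial\Omega\times U$) yields $\int_{\partial\Omega}\int_U\psi^+\,\mathrm{d}\rho_0^{bi}=\int_\Omega\psi^+(T^{bi}(y))\,\mathrm{d}\nu^-(y)=\int_{\partial\Omega}\psi^+\,\mathrm{d}[T^{bi}_{\#}\nu^-]$. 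Adding the last two displays gives the asserted equality. I do not expect any genuine obstacle here: the argument is a mirror image of Lemmas \ref{Eulerian ii}--\ref{Eulerian ib}, and the only points deserving attention are the measurable selection of the optimal triples (Corollary \ref{closed graph}) and the verification that $\rho_0^{bi}$ charges only the boundary, so that pairing with $\psi^+$ makes sense.
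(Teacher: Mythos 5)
Your construction is exactly the one the paper uses: define $\eta^{bi}$ and $\rho^{bi}$ by pushing $\nu^-$ forward along the optimal triples $(\gamma^{T^{bi}(y),y},u^{T^{bi}(y),y},\tau^{T^{bi}(y),y})$, verify the marginal condition and the continuity equation by integrating $\frac{d}{dt}\xi(t,\gamma(t))$ along each trajectory, and read off the cost identity from the optimality of the controls and $\pi^{bi}=(T^{bi},\mathrm{Id})_{\#}\nu^-$. The proposal is correct and matches the paper's proof, with the added (welcome) explicit checks of measurability of the selection and of the fact that $\rho^{bi}_0$ charges only $\partial\Omega\times U$.
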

\begin{proof}
Define the pair consisting of a density process and a stopping distribution $(\rho^{bi},\eta^{bi}) \in \mathcal{E}(T^{bi}_{\#}\nu^-,\nu^-)$ as follows:
$$\int_{\mathbb{R}^+}\int_{\mathbb{R}^d} \xi(t,x)\,\mathrm{d}\eta^{bi}(t,x)=\int_{\Omega} \xi(\tau^{T^{bi}(y),y},y)\,\mathrm{d}\nu^-(y),\,\,\mbox{for all}\,\,\,\xi \in C_0(\mathbb{R}^+ \times \mathbb{R}^d),$$\\
while, for all $\zeta \in C_0(\mathbb{R}^d\times U)$,

$$\int_U \int_{\mathbb{R}^d} \zeta(x,u)\,\mathrm{d}\rho_t^{bi}(x,u)\\=\int_{\{y \in \Omega\,\,:\,\,t \leq \tau^{T^{bi}(y),y}\}} \zeta(\gamma^{T^{bi}(y),y}(t),u^{T^{bi}(y),y}(t))\,\mathrm{d}\nu^-(y).$$ 
One has
\begin{equation*}
\int_{\mathbb{R}^+}\int_{\mathbb{R}^d} \psi(x)\,\mathrm{d}\eta^{bi}(t,x)=\int_{\Omega} \psi(y)\,\mathrm{d}\nu^-(y),\,\,\mbox{for every}\,\,\psi \in C_0(\mathbb{R}^d),
\end{equation*}
and $$\int_{\mathbb{R}^+} \int_U \int_{\mathbb{R}^d} \left[\partial_t \xi(t,x) + k(t,x,u) \cdot \nabla \xi(t,x)\right]\,\mathrm{d}\rho_t^{bi}(x,u)\,\mathrm{d}t$$ 
\begin{align*}
&=\int_{\Omega}\int_{0}^{\tau^{T^{bi}(y),y}} \left[\partial_t \xi(t,\gamma^{T^{bi}(y),y}(t)) + k(t,\gamma^{T^{bi}(y),y}(t),u^{T^{bi}(y),y}(t)) \cdot \nabla \xi(t,\gamma^{T^{bi}(y),y}(t))\right]\mathrm{d}t\,\mathrm{d}\nu^-\\ 
&\qquad\qquad=\,\,\,\int_{\mathbb{R}^+}\int_{\mathbb{R}^d} \xi(t,x)\,\mathrm{d}\eta^{bi}(t,x)- \int_{\Omega} \xi(0,T^{bi}(y))\,\mathrm{d}\nu^-(y)\\ 
&\qquad\qquad=\,\,\,\int_{\mathbb{R}^+}\int_{\mathbb{R}^d} \xi(t,x)\,\mathrm{d}\eta^{bi}(t,x)- \int_{\Omega} \xi(0,x)\,\mathrm{d}\left[T^{bi}_{\#}\nu^-\right](x),\,\,\mbox{for every}\,\,\,\xi \in C_0^1(\mathbb{R}^+ \times \mathbb{R}^d).
\end{align*}
Moreover, we have 
\begin{eqnarray*}
&& \int_{\mathbb{R}^+} \int_U \int_{\mathbb{R}^d} L(t,x,u)\,\mathrm{d}\rho_t^{bi}(x,u)\,\mathrm{d}t + \int_{\partial\Omega}\int_U \psi^+(x)\,\mathrm{d}\rho_0^{bi}(x,u)\\ 
&&\qquad =\,\,\int_{\Omega}\int_{0}^{\tau^{T^{bi}(y),y}} L(t,\gamma^{T^{bi}(y),y}(t),u^{T^{bi}(y),y}(t))\,\mathrm{d}t\,\mathrm{d}\nu^-(y) + \int_{\partial\Omega} \psi^+(x)\,\mathrm{d}\left[T^{bi}_{\#}\nu^-\right](x)\\ \\
&&\qquad =\,\,\int_{\Omega}c(T^{bi}(y),y)\,\mathrm{d}\nu^-(y) + \int_{\partial\Omega} \psi^+(x)\,\mathrm{d}\left[T^{bi}_{\#}\nu^-\right](x). \qedhere 
\end{eqnarray*}
\end{proof}
\noindent Now, set
$$\mathcal{E}_0(\mu^+,\mu^-):=\bigg\{(\rho,\eta) \in \mathcal{E}(\mu^+ + \chi^+,\mu^- + \chi^-) \,:\,\chi^+,\,\chi^- \in \mathcal{M}^+(\partial\Omega)\bigg\}.$$
In other words, a pair $(\rho,\eta)$ belongs to $\mathcal{E}_0(\mu^+,\mu^-)$ if and only if we have the following: for all $\psi \in C_0(\Omega)$,
$$\int_{\Omega} \int_{\mathbb{R}^+} \psi(x)\,\mathrm{d}\eta(t,x)
=\int_{\Omega} \psi(y)\,\mathrm{d}\mu^-(y),$$\\
while, for all $\xi \in C_0^1(\mathbb{R}^+ \times\mathbb{R}^d)$ such that $\xi(0,\cdot)=0$ on $\partial\Omega$, one has
$$\int_{\mathbb{R}^+} \int_U \int_{\mathbb{R}^d} \left[\partial_t \xi(t,x) + k(t,x,u) \cdot \nabla \xi(t,x)\right]\mathrm{d}\rho_t(x,u)\,\mathrm{d}t=\int_{\mathbb{R}^+}\int_{\mathbb{R}^d} \xi(t,x)\,\mathrm{d}\eta(t,x)- \int_{\Omega} \xi(0,x)\,\mathrm{d}\mu^+(x).$$
Now, we have the following result.
\begin{proposition} \label{Eulerian and Kantorovich with the same cost}
There exists an admissible pair $(\rho,\eta) \in \mathcal{E}_0(\mu^+,\mu^-)$ such that  
\begin{eqnarray*}
&&\int_{\mathbb{R}^+} \int_U \int_{\mathbb{R}^d} L(t,x,u)\,\mathrm{d}\rho_t(x,u)\,\mathrm{d}t +  \int_{\partial\Omega}\int_U \psi^+(x)\,\mathrm{d}\rho_0(x,u) -  \int_{\partial\Omega}\int_{\mathbb{R}^+} \psi^-(x)\,\mathrm{d}\eta(t,x)\\
&&\qquad \qquad =\int_{\Omega \times \Omega}c(x,y)\,\mathrm{d}\pi(x,y) + \int_{\partial\Omega} \psi^+(x)\,\mathrm{d}\pi_x - \int_{\partial\Omega} \psi^-(y)\,\mathrm{d}\pi_y.
\end{eqnarray*}
Consequently, 
\begin{equation}\label{T2:T1}
{\mathcal T}_2(\mu^+, \mu^-; \psi^+, \psi^-) \leq {\mathcal T}_1(\mu^+, \mu^-; \psi^+, \psi^-).
\end{equation}
\end{proposition}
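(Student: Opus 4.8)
The plan is to assemble the desired admissible pair $(\rho,\eta) \in \mathcal{E}_0(\mu^+,\mu^-)$ by superposing the three pairs constructed in Lemmas \ref{Eulerian ii}, \ref{Eulerian ib} and \ref{Eulerian bi}, and then to read off the cost identity by adding the three cost identities already established there. First I would fix an optimal transport plan $\pi$ for \eqref{Problem 1}, use the decomposition $\pi^{ii},\pi^{ib},\pi^{bi}$ (with $\pi^{bb}=0$, as justified in the proof of Proposition \ref{existence of a minimizer}), and set $\nu^+=\pi^{ib}_x$, $\nu^-=\pi^{bi}_y$ exactly as in Section \ref{Sec. 3}. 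By Lemmas \ref{Eulerian ii}, \ref{Eulerian ib}, \ref{Eulerian bi} we obtain pairs $(\rho^{ii},\eta^{ii}) \in \mathcal{E}(\mu^+-\nu^+,\mu^--\nu^-)$, $(\rho^{ib},\eta^{ib}) \in \mathcal{E}(\nu^+,T^{ib}_{\#}\nu^+)$ and $(\rho^{bi},\eta^{bi}) \in \mathcal{E}(T^{bi}_{\#}\nu^-,\nu^-)$. I then define $\rho_t := \rho_t^{ii}+\rho_t^{ib}+\rho_t^{bi}$ for each $t$, and $\eta := \eta^{ii}+\eta^{ib}+\eta^{bi}$.

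The next step is to verify $(\rho,\eta) \in \mathcal{E}_0(\mu^+,\mu^-)$, i.e., the two defining relations displayed just before the statement. Since \eqref{stopping time} and \eqref{process} are linear in $(\rho,\eta)$ and in the source/target measures, summing the corresponding relations for the three pieces gives a pair in $\mathcal{E}(\chi^+,\chi^-)$ with $\chi^+ = (\mu^+-\nu^+)+\nu^++T^{bi}_{\#}\nu^- = \mu^+ + T^{bi}_{\#}\nu^-$ and $\chi^- = (\mu^--\nu^-)+T^{ib}_{\#}\nu^++\nu^- = \mu^- + T^{ib}_{\#}\nu^+$. Because $T^{bi}_{\#}\nu^-$ and $T^{ib}_{\#}\nu^+$ are supported on $\partial\Omega$, this is precisely the statement $(\rho,\eta) \in \mathcal{E}_0(\mu^+,\mu^-)$; concretely one checks the restricted relation for test functions $\xi$ with $\xi(0,\cdot)=0$ on $\partial\Omega$ (so the boundary terms $\int\xi(0,\cdot)\,\mathrm{d}[T^{bi}_{\#}\nu^-]$ drop out) and the marginal relation for $\psi\in C_0(\Omega)$ (so that $T^{ib}_{\#}\nu^+$ does not contribute on $\Omega^\circ$). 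I would also note $\chi^+ = \pi_x$ on $\partial\Omega$ and $\chi^-=\pi_y$ on $\partial\Omega$, using that $\xi^+ := \pi_x - \mu^+ = T^{bi}_{\#}\nu^-$ and $\xi^- := \pi_y - \mu^- = T^{ib}_{\#}\nu^+$, which is where the structure from the decomposition proposition enters.

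For the cost identity, I add the three identities from the lemmas. The left-hand sides sum to $\int\!\int\!\int L\,\mathrm{d}\rho_t\,\mathrm{d}t + \int_{\partial\Omega}\!\int_U \psi^+\,\mathrm{d}\rho_0^{bi} - \int_{\partial\Omega}\!\int_{\mathbb{R}^+}\psi^-\,\mathrm{d}\eta^{ib}$; here I use that $\rho_0^{ii}$ and $\rho_0^{ib}$ are supported in $\Omega^\circ$ (their initial measures $\mu^+-\nu^+$ and $\nu^+$ live there by construction), so $\int_{\partial\Omega}\!\int_U\psi^+\,\mathrm{d}\rho_0 = \int_{\partial\Omega}\!\int_U\psi^+\,\mathrm{d}\rho_0^{bi}$, and similarly $\eta^{ii},\eta^{bi}$ restricted to $\partial\Omega$ vanish (their targets $\mu^--\nu^-$ and $\nu^-$ live in $\Omega^\circ$), so $\int_{\partial\Omega}\!\int_{\mathbb{R}^+}\psi^-\,\mathrm{d}\eta = \int_{\partial\Omega}\!\int_{\mathbb{R}^+}\psi^-\,\mathrm{d}\eta^{ib}$. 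The right-hand sides sum to $\int c\,\mathrm{d}\pi^{ii} + \int c\,\mathrm{d}\pi^{ib} + \int c\,\mathrm{d}\pi^{bi} - \int_{\partial\Omega}\psi^-\,\mathrm{d}[T^{ib}_{\#}\nu^+] + \int_{\partial\Omega}\psi^+\,\mathrm{d}[T^{bi}_{\#}\nu^-]$, which equals $\int c\,\mathrm{d}\pi + \int_{\partial\Omega}\psi^+\,\mathrm{d}\pi_x - \int_{\partial\Omega}\psi^-\,\mathrm{d}\pi_y$ since $\pi=\pi^{ii}+\pi^{ib}+\pi^{bi}$, $\pi_x = \mu^+ + T^{bi}_{\#}\nu^-$ and $\pi_y = \mu^- + T^{ib}_{\#}\nu^+$ with the interior parts $\mu^\pm$ not seeing $\partial\Omega$. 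This gives the displayed equality, and \eqref{T2:T1} follows immediately by taking the infimum over $\mathcal{E}_0(\mu^+,\mu^-)$ on the left and noting the right-hand side is $\mathcal{T}_1$ evaluated at the optimal $\pi$.

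The main obstacle is bookkeeping rather than any deep difficulty: one must be careful that the three auxiliary pairs are "compatible" in the sense that the target of one matches the source of another only after restriction to the boundary, and that the support locations ($\Omega^\circ$ versus $\partial\Omega$) of the various $\rho_0^{\bullet}$ and $\eta^{\bullet}$ are correctly tracked so that the boundary tariff integrals pick up exactly the right pieces. The delicate point is the identification $\xi^+=T^{bi}_{\#}\nu^-$, $\xi^-=T^{ib}_{\#}\nu^+$; this uses that $\pi^{bi}$ is the optimal plan from $T^{bi}_{\#}\nu^-$ to $\nu^-$ and $\pi^{ib}$ the optimal plan from $\nu^+$ to $T^{ib}_{\#}\nu^+$, as recorded in the decomposition proposition, together with $\pi^{bb}=0$.
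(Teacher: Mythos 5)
Your proposal is correct and follows essentially the same route as the paper: superpose the three pairs from Lemmas \ref{Eulerian ii}, \ref{Eulerian ib}, \ref{Eulerian bi}, check admissibility in $\mathcal{E}_0(\mu^+,\mu^-)$ by linearity (with the boundary masses $T^{bi}_{\#}\nu^-$, $T^{ib}_{\#}\nu^+$ absorbed into the unknown boundary parts and the test functions killing the extra boundary terms), and sum the three cost identities, identifying $\pi_x=\mu^++T^{bi}_{\#}\nu^-$ and $\pi_y=\mu^-+T^{ib}_{\#}\nu^+$ exactly as the paper does. Your explicit tracking of which $\rho_0^{\bullet}$ and $\eta^{\bullet}$ charge $\partial\Omega$ is a point the paper leaves implicit, but it is the same argument.
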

\begin{proof}
Set 
$\rho:=\rho^{ii} + \rho^{ib} + \rho^{bi} \,\,\,\mbox{and}\,\,\,\eta:=\eta^{ii} + \eta^{ib} + \eta^{bi}.$
From Lemmas \ref{Eulerian ii}, \ref{Eulerian ib} \& \ref{Eulerian bi}, we have for all $\psi \in C_0(\Omega),$
\begin{align*}\int_{\mathbb{R}^+}\int_\Omega \psi(x)\,\mathrm{d}\eta(t,x)
&=\int_{\mathbb{R}^+}\int_\Omega \psi(x)\,\mathrm{d}\eta^{ii}(t,x)+\int_{\mathbb{R}^+}\int_\Omega \psi(x)\,\mathrm{d}\eta^{ib}(t,x) +\int_{\mathbb{R}^+}\int_\Omega \psi(x)\,\mathrm{d}\eta^{bi}(t,x)\\
&=\int_{\Omega} \psi(y)\,\mathrm{d}\mu^-(y),
\end{align*}
while, for all $\xi \in C_0^1(\mathbb{R}^+ \times\mathbb{R}^d)$ such that $\xi(0,\cdot)=0$ on $\partial\Omega$, one has
\begin{eqnarray*} 
&&\int_{\mathbb{R}^+} \int_U \int_{\mathbb{R}^d} \left[\partial_t \xi(t,x) + k(t,x,u) \cdot \nabla \xi(t,x)\right]\mathrm{d}\rho_t(x,u)\,\mathrm{d}t\\
&&\qquad=\int_{\mathbb{R}^+} \int_U \int_{\mathbb{R}^d} \left[\partial_t \xi(t,x) + k(t,x,u) \cdot \nabla \xi(t,x)\right]\mathrm{d}\left[\rho_t^{ii} + \rho_t^{ib} + \rho_t^{bi}\right](x,u)\,\mathrm{d}t\\
&& \qquad=\int_{\mathbb{R}^+}\int_{\mathbb{R}^d} \xi(t,x)\,\mathrm{d}\eta^{ii}(t,x)- \int_{\Omega} \xi(0,x)\,\mathrm{d}(\mu^+ - \nu^+)(x) + \int_{\mathbb{R}^+}\int_{\mathbb{R}^d} \xi(t,x)\,\mathrm{d}\eta^{ib}(t,x)\\
&&\qquad \qquad\quad- \int_{\Omega} \xi(0,x)\,\mathrm{d}\nu^+(x) + \int_{\mathbb{R}^+}\int_{\mathbb{R}^d} \xi(t,x)\,\mathrm{d}\eta^{bi}(t,x)- \int_{\Omega} \xi(0,x)\,\mathrm{d}\left[T^{bi}_{\#}\nu^-\right](x)\\
&& \qquad=\int_{\mathbb{R}^+}\int_{\mathbb{R}^d} \xi(t,x)\,\mathrm{d}\eta(t,x)- \int_{\Omega} \xi(0,x)\,\mathrm{d}\mu^+(x).
\end{eqnarray*}

\noindent On the other hand, by Lemmas \ref{Eulerian ii}, \ref{Eulerian ib} \& \ref{Eulerian bi}, we get
{\small
\begin{eqnarray*}
&&\int_{\mathbb{R}^+} \int_U \int_{\mathbb{R}^d} L(t,x,u)\,\mathrm{d}\rho_t(x,u)\,\mathrm{d}t +  \int_{\partial\Omega}\int_U \psi^+(x)\,\mathrm{d}\rho_0(x,u) - \int_{\partial\Omega} \int_{\mathbb{R}^+} \psi^-(x)\,\mathrm{d}\eta(t,x)\\
&&\quad\qquad=\int_{\Omega \times \Omega}c(x,y)\,\mathrm{d}\pi^{ii}(x,y)+ \int_{\Omega}c(T^{bi}(y),y)\,\mathrm{d}\nu^-(y) + \int_{\partial\Omega} \psi^+(x)\,\mathrm{d}\left[T^{bi}_{\#}\nu^-\right](x)\\
&&\qquad\qquad\qquad \, +\int_{\Omega}c(x,T^{ib}(x))\,\mathrm{d}\nu^+(x) - \int_{\partial\Omega} \psi^-(y)\,\mathrm{d}\left[T^{ib}_{\#}\nu^+\right](y)\\
&& \quad \qquad=\int_{\Omega \times \Omega}c(x,y)\,\mathrm{d}\pi(x,y) + \int_{\partial\Omega} \psi^+(x)\,\mathrm{d}\pi_x - \int_{\partial\Omega} \psi^-(y)\,\mathrm{d}\pi_y.
\end{eqnarray*}
This completes the proof. $\qedhere$
}
\end{proof}
We now recall the Eulerian dual problem
{\small
\begin{equation} \label{Problem 11}
{\mathcal D}_2(\mu^\pm; \psi^\pm):=\sup\left\{\int_\Omega \varphi^-\mathrm{d}\mu^- - \int_\Omega J^+(0,\cdot)\,\mathrm{d}\mu^+: 
\begin{array}{l}\varphi^- \in C(\mathbb{R}^d),\,J^+ \in C^1(\mathbb{R}^+ \times \mathbb{R}^d),\\
(J^+,\varphi^-)\;\,\mbox{solves}\,\;\eqref{Hamilton} \end{array}\right\},
\end{equation}
}
\hspace{-0.5em} where
\begin{equation} \label{Hamilton}
\begin{cases} 
 \partial_t J^+(t,x) + k(t,x,u)\cdot \nabla J^+(t,x) \leq L(t,x,u) \,\,&\,\,\mbox{for all}\,\,\,(t,x,u) \in \mathbb{R}^+ \times \mathbb{R}^d \times U,\\
 \varphi^-(x) \leq J^+(t,x)\,\,&\,\,\mbox{for all}\,\,\,(t,x) \in \mathbb{R}^+ \times \mathbb{R}^d,\\
\psi^-\leq \varphi^- \leq J^+(0,\cdot) \leq \psi^+ \,\,&\,\,\mbox{on}\,\,\,\partial\Omega,
 \end{cases}
 \end{equation}\\
 and prove the following.
\begin{proposition} Under the above assumptions, we have 
\begin{equation}\label{D1:D2}
{\mathcal D}_1(\mu^+, \mu^-; \psi^+, \psi^-) \leq {\mathcal D}_2(\mu^+, \mu^-; \psi^+, \psi^-).
\end{equation}
\end{proposition}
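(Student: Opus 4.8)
The plan is to show $\mathcal{D}_1\le\mathcal{D}_2$ by taking an arbitrary pair $(\varphi^+,\varphi^-)$ admissible for the problem \eqref{Problem 10} defining $\mathcal{D}_1$ and manufacturing from it a pair $(J^+,\tilde\varphi^-)$ admissible for \eqref{Problem 11}, i.e. solving \eqref{Hamilton}, whose $\mathcal D_2$-objective is at least the $\mathcal D_1$-objective of $(\varphi^+,\varphi^-)$; passing to the supremum over $(\varphi^+,\varphi^-)$ then yields the inequality.

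The first step is a normalization. I would replace $(\varphi^+,\varphi^-)$ by its $c$-conjugate pair, setting $\varphi^+(x):=\sup_{y\in\Omega}\{\varphi^-(y)-c(x,y)\}$ and then $\varphi^-(y):=\inf_{x\in\Omega}\{\varphi^+(x)+c(x,y)\}$. Since the free end time may be taken equal to $0$ one has $c(x,x)=0$, and a routine check (using \eqref{g_1g_2.0}) shows the new pair is still admissible for \eqref{Problem 10} — in particular $\psi^-\le\varphi^-\le\varphi^+\le\psi^+$ on $\partial\Omega$ is preserved — that $\varphi^\pm$ inherit a Lipschitz constant from $c$, and that $\int_\Omega\varphi^-\,\mathrm d\mu^--\int_\Omega\varphi^+\,\mathrm d\mu^+$ does not decrease, so it suffices to treat conjugate pairs, for which $\varphi^+(x)=\sup_{y\in\Omega}\{\varphi^-(y)-c(x,y)\}$. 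The second, and conceptually key, step is that since trajectories of \eqref{control system} need not remain in $\Omega$, one must extend $\varphi^-$ to all of $\mathbb R^d$ in a way that ``prices in'' excursions outside $\Omega$: I would set $\tilde\varphi^-(z):=\inf_{x\in\Omega}\{\varphi^+(x)+c(x,z)\}$ for $z\in\mathbb R^d$, where $c(x,z)$ is the minimal control cost of steering $x$ to $z$ (finite and locally Lipschitz in $z$ by the arguments behind Lemma~\ref{admissible trajectory} and Proposition~\ref{Lipschitz regularity of the transport cost}); this extension is continuous and agrees with $\varphi^-$ on $\Omega$.

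Next I would take for $J^+$ the value function of the associated free end-time optimal stopping problem with terminal payoff $\tilde\varphi^-$,
$$J^+(t,x):=\sup\Big\{\tilde\varphi^-\big(\gamma^{t,x}_u(t+\tau)\big)-\int_t^{t+\tau}L\big(s,\gamma^{t,x}_u(s),u(s)\big)\,\mathrm ds\ :\ \tau\ge0,\ u\Big\}.$$
Using (H0)--(H6) exactly as in Section~\ref{Sec. 2}, the supremum is attained and $J^+$ is locally Lipschitz. Testing with $\tau=0$ gives $\tilde\varphi^-\le J^+$ on $\mathbb R^+\times\mathbb R^d$; the dynamic programming principle shows $J^+$ is a viscosity subsolution of $\partial_t J^++k(t,x,u)\cdot\nabla J^+\le L(t,x,u)$ for every $u\in U$; and for $x\in\Omega$ the bound $\tilde\varphi^-(z)\le\varphi^+(x)+c(x,z)\le\varphi^+(x)+\int_0^\tau L$ along any admissible trajectory issuing from $x$ forces $J^+(0,x)\le\varphi^+(x)$, while choosing an optimal trajectory from $x$ to a maximizer $y^\star$ in $\varphi^+(x)=\sup_y\{\varphi^-(y)-c(x,y)\}$ gives the reverse inequality, so in fact $J^+(0,\cdot)=\varphi^+$ on $\Omega$. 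In particular, on $\partial\Omega$ one has $\psi^-\le\varphi^-=\tilde\varphi^-\le J^+(0,\cdot)=\varphi^+\le\psi^+$, so $(J^+,\tilde\varphi^-)$ meets all the requirements of \eqref{Hamilton} except the $C^1$ regularity of $J^+$, and its objective equals $\int_\Omega\varphi^-\,\mathrm d\mu^--\int_\Omega\varphi^+\,\mathrm d\mu^+$.

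The remaining, and I expect main, difficulty is to upgrade the merely Lipschitz $J^+$ to an honest $C^1$ competitor without destroying the hard constraints $\varphi^-\le J^+$, $\psi^-\le\varphi^-$ on $\partial\Omega$, and $J^+(0,\cdot)\le\psi^+$ on $\partial\Omega$. I would mollify, $J^+_\varepsilon:=\zeta_\varepsilon*J^+$ (extending $J^+$ to $t<0$ so the convolution is defined near $t=0$); since the Hamiltonian $H(t,x,p):=\sup_{u\in U}\{k(t,x,u)\cdot p-L(t,x,u)\}$ is convex in $p$ and Lipschitz in $(t,x)$ on $\{|p|\le\Lip J^+\}$, Jensen's inequality yields $\partial_tJ^+_\varepsilon+k\cdot\nabla J^+_\varepsilon\le L+C\varepsilon$, and then, crucially invoking $L\ge\beta_2>0$ from (H4), the rescaled function $(1-C\varepsilon/\beta_2)\,J^+_\varepsilon$ is a genuine $C^\infty$ subsolution. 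Since $J^+_\varepsilon\to J^+$ uniformly on the compact region swept out by the relevant trajectories, the obstacle and the two boundary inequalities are recovered up to an $o(1)$ error, which one absorbs by a small perturbation of the data and then lets $\varepsilon\to0$ (this last bookkeeping — reconciling the smoothing error with the pointwise inequalities on $\partial\Omega$, where $\mu^+$ carries no mass — is the delicate point, and is of the same nature as the regularization carried out in \cite{Aaron}). In the limit one obtains $\mathcal D_2(\mu^\pm;\psi^\pm)\ge\int_\Omega\varphi^-\,\mathrm d\mu^--\int_\Omega\varphi^+\,\mathrm d\mu^+$ for every admissible $(\varphi^+,\varphi^-)$, hence $\mathcal D_1(\mu^\pm;\psi^\pm)\le\mathcal D_2(\mu^\pm;\psi^\pm)$.
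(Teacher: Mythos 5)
Your construction is, in substance, the one the paper uses: from a $\mathcal D_1$-admissible pair $(\varphi^+,\varphi^-)$ you pass to the value function of the free end-time stopping problem with terminal payoff $\varphi^-$ (the paper's $J^+_{\varphi^-}$), you check $\varphi^-\le J^+$, $J^+(0,\cdot)\le\varphi^+$ and the boundary inequalities, and you conclude modulo the regularity of $J^+$. The $c$-conjugation normalization and the explicit extension of $\varphi^-$ to $\mathbb R^d$ are correct and are details the paper leaves implicit; up to that point your argument is fine (note only that (H5) gives Lipschitz continuity of $L$ in $x$ but mere continuity in $t$, so the mollification error below is a modulus $\omega(\varepsilon)$ rather than $C\varepsilon$ -- harmless).

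The genuine gap is in the last step, which is precisely where your route departs from the paper's. The paper does not mollify: it invokes the characterization (from \cite{Bardi}) of the viscosity solution $J^+_{\varphi^-}$ as the infimum of supersolutions of the system \eqref{Hamilton}, which is exactly the bridge to the $C^1$ class appearing in \eqref{Problem 11}. Your mollification scheme, as written, does not close. After convolving, the obstacle inequality $\tilde\varphi^-\le J^+_\varepsilon$ may fail by $O(\varepsilon)$, and the two natural repairs each destroy a hard constraint: adding a constant of size $O(\varepsilon)$ to $J^+_\varepsilon$ violates $J^+(0,\cdot)\le\psi^+$ on $\partial\Omega$, while the rescaling $(1-C\varepsilon/\beta_2)J^+_\varepsilon$ (and the matching rescaling of $\varphi^-$) moves both boundary inequalities by amounts proportional to $|\psi^\pm|$ and $|\varphi^-|$, which have no sign. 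Your proposed remedy, ``perturb the data and let $\varepsilon\to0$,'' goes the wrong way: replacing $(\psi^+,\psi^-)$ by $(\psi^++a,\psi^--a)$ enlarges the feasible set of \eqref{Problem 11}, so it yields $\mathcal D_1(\mu^\pm;\psi^\pm)\le\mathcal D_2(\mu^\pm;\psi^++a,\psi^--a)$, and you would then need upper semicontinuity of $\mathcal D_2$ in the tariffs, which you do not establish. The workable variant is to build the slack into the $\mathcal D_1$ side first, i.e.\ to approximate the admissible pair by one satisfying the boundary inequalities with a strict margin (this uses the no-arbitrage condition \eqref{g_1g_2.0}, e.g.\ via truncation by $\psi^\pm$), and only then mollify; that argument is missing. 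Alternatively, and most economically, you can simply replace the entire mollification step by the paper's device: quote the infimum-of-supersolutions characterization of $J^+_{\varphi^-}$ and conclude as in the text.
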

\begin{proof}
Set for all $(t,x) \in \mathbb{R}^+ \times \mathbb{R}^d$, 
$$J^+_{\varphi^-}(t,x)=-\min_{u,\,\tau}\bigg\{\int_t^{\tau}L(s,\gamma^{t,x}_u(s),u(s))\,\mathrm{d}s \,-\, \varphi^-(\gamma^{t,x}_u(\tau)) \bigg\}.$$
It is well known that $J^+_{\varphi^-}$ is a viscosity solution of the following
Hamilton-Jacobi-Bellman quasi-variational inequality (see, for instance, \cite{Bardi}):
\begin{equation} \label{HJBV}
\begin{cases}
 \partial_t J + H(t,x,\nabla J) \leq 0\,\,&\,\mbox{on}\,\,\,\mathbb{R}^+ \times \mathbb{R}^d,\\
 \varphi^- \leq J\,\,&\,\mbox{on}\,\,\,\mathbb{R}^+ \times \mathbb{R}^d,\\
 \partial_t J + H(t,x,\nabla J) = 0\,\,&\,\mbox{on}\,\,\,\{ \varphi^- < J\},
 \end{cases}
 \end{equation}
 where  $$H(t,x,p):=\max_u \{k(t,x,u) \cdot p - L(t,x,u)\},\,\,\mbox{for every}\,\,(t,x,p)\in\mathbb{R}^+ \times \mathbb{R}^d \times \mathbb{R}^d.$$
 Let $(\varphi^+,\varphi^-)$ be an admissible pair of the dual problem ${\mathcal D}_1(\mu^\pm; \psi^\pm)$ in \eqref{dual with boundary costs}. By the definition of $J^+_{\varphi^-}$, it is clear that we have
 $$J^+_{\varphi^-}(0,\cdot) \leq \varphi^+ \hbox{\quad  and \quad $\left[-J^+_{\varphi^-}(0,\cdot)\right] \oplus \varphi^- \leq c.$}$$
On the other hand, it is also known \cite{Bardi} that the viscosity solution $J^+_{\varphi^-}$ is the
infimum of all supersolutions $J$ to
$$\begin{cases}
 \partial_t J(t,x) + k(t,x,u)\cdot \nabla J(t,x) \leq L(t,x,u) \,\,&\,\,\mbox{for all}\,\,\,(t,x,u) \in \mathbb{R}^+ \times \mathbb{R}^d \times U,\\
 \varphi^-(x) \leq J(t,x)\,\,&\,\,\mbox{for all}\,\,\,(t,x) \in \mathbb{R}^+ \times \mathbb{R}^d,\\
 \psi^-(x) \leq \varphi^- (x) \,\,\, &\ x \in \partial\Omega,\\
J^+(0,x) \leq \psi^+(x) \,\,\, &\ x \in \partial\Omega.
 \end{cases}
 $$
 As a consequence, we get
 {\small
 \begin{eqnarray*}
{\mathcal D}_1(\mu^\pm; \psi^\pm) 
 &\leq & \,\,\sup_{\varphi^-}\left\{\int_\Omega \varphi^- \,\mathrm{d} \mu^- - \int_\Omega J_{\varphi^-}^+(0,\cdot) \,\mathrm{d}\mu^+
:\psi^- \leq \varphi^- \leq J_{\varphi^-}^+(0,\cdot)\leq \psi^+\, \mbox{on}\;
\partial\Omega\right\}\\
&\leq &{\mathcal D}_2(\mu^\pm; \psi^\pm). \qedhere
\end{eqnarray*}
}
 \end{proof}
 On the other hand, we have the following:
 \begin{proposition}\label{the Eulerian dual is less than the Eulerian primal}
 The following inequality holds:
\begin{equation}\label{D2:T2}
{\mathcal D}_2(\mu^+, \mu^-; \psi^+, \psi^-) \leq {\mathcal T}_2(\mu^+, \mu^-; \psi^+, \psi^-).
\end{equation}
 \end{proposition}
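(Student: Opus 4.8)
The plan is to show weak duality between the Eulerian primal ${\mathcal T}_2$ and its dual ${\mathcal D}_2$ by testing the continuity equation \eqref{stopping definitions} against an admissible dual pair $(J^+,\varphi^-)$. Fix any admissible pair $(\rho,\eta)\in\mathcal E_0(\mu^+,\mu^-)$ with associated boundary measures $\chi^+,\chi^-\in\mathcal M^+(\partial\Omega)$, so that $(\rho,\eta)\in\mathcal E(\mu^++\chi^+,\mu^-+\chi^-)$, and fix any admissible $(J^+,\varphi^-)$ solving \eqref{Hamilton} with $J^+\in C^1$, $\varphi^-\in C$. The first step is to insert $\xi=J^+$ into the defining identity \eqref{process}; since $J^+$ need not have compact support one truncates with a cutoff and uses (H1) (so $k$ is bounded) and the fact that $\rho_t$ is supported, after restriction, in a bounded set, to justify the limit. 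This yields
\begin{equation*}
\int_{\mathbb R^+}\!\int_U\!\int_{\mathbb R^d}\bigl[\partial_t J^+(t,x)+k(t,x,u)\cdot\nabla J^+(t,x)\bigr]\,\mathrm d\rho_t(x,u)\,\mathrm dt
=\int_{\mathbb R^+}\!\int_{\mathbb R^d}J^+(t,x)\,\mathrm d\eta(t,x)-\int_{\mathbb R^d}J^+(0,x)\,\mathrm d(\mu^++\chi^+)(x).
\end{equation*}

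Next I would use the three inequality constraints in \eqref{Hamilton}. The first, $\partial_t J^+ + k\cdot\nabla J^+\le L$, bounds the left-hand side above by $\int\!\int\!\int L\,\mathrm d\rho_t\,\mathrm dt$. On the right-hand side, the second constraint $\varphi^-\le J^+$ on $\mathbb R^+\times\mathbb R^d$ gives $\int J^+\,\mathrm d\eta\ge \int\varphi^-\,\mathrm d\eta$; decomposing $\eta$ into its interior part (whose $x$-marginal in time is $\mu^-$ by the constraint \eqref{stopping definitions}) and its boundary part (on $\partial\Omega$, total mass $\chi^-$), and using $\psi^-\le\varphi^-$ on $\partial\Omega$, we get $\int J^+\,\mathrm d\eta\ge\int_\Omega\varphi^-\,\mathrm d\mu^- + \int_{\partial\Omega}\psi^-\,\mathrm d\chi^-$. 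Similarly, on the subtracted term, $J^+(0,\cdot)\le\psi^+$ on $\partial\Omega$ gives $\int J^+(0,\cdot)\,\mathrm d(\mu^++\chi^+)\le\int_\Omega J^+(0,\cdot)\,\mathrm d\mu^+ + \int_{\partial\Omega}\psi^+\,\mathrm d\chi^+$. Rearranging all of these, and recalling from \eqref{Eulerian classique with boundary} that $\rho_0$ restricted to $\partial\Omega$ has $U$-marginal $\chi^+$ while $\eta$ restricted to $\partial\Omega$ has total $\chi^-$, yields
\begin{equation*}
\int_\Omega\varphi^-\,\mathrm d\mu^- - \int_\Omega J^+(0,\cdot)\,\mathrm d\mu^+
\le \int_{\mathbb R^+}\!\int_U\!\int_{\mathbb R^d}L\,\mathrm d\rho_t\,\mathrm dt
+\int_{\partial\Omega}\!\int_U\psi^+\,\mathrm d\rho_0
-\int_{\mathbb R^+}\!\int_{\partial\Omega}\psi^-\,\mathrm d\eta.
\end{equation*}
Taking the supremum over admissible $(J^+,\varphi^-)$ on the left and the infimum over admissible $(\rho,\eta)$ on the right gives \eqref{D2:T2}.

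The main obstacle I anticipate is purely technical: $J^+\in C^1(\mathbb R^+\times\mathbb R^d)$ but \eqref{process} is stated for test functions in $C_0^1$, so the insertion $\xi=J^+$ requires a cutoff/approximation argument. This is harmless because (H1) bounds $k$ uniformly, the admissible trajectories stay in a bounded neighborhood of $\Omega$ on the relevant time interval (so $\rho$ is concentrated there), and the boundary conditions on $\eta$ and $\rho_0$ concern only the part over $\partial\Omega$; one must simply check that the contributions from the cutoff region vanish in the limit, and that the two ``constraint'' inequalities survive the passage. A second minor point is correctly accounting for the boundary vs.\ interior decomposition of $\rho_0$ and $\eta$ so that the tariff terms $\int_{\partial\Omega}\psi^+\,\mathrm d\rho_0$ and $\int_{\partial\Omega}\psi^-\,\mathrm d\eta$ appear with the right signs; this is bookkeeping, using the definition of $\mathcal E_0(\mu^+,\mu^-)$ together with the constraints in \eqref{stopping definitions}.
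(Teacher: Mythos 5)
Your proposal is correct and follows essentially the same route as the paper: test the defining weak identity of $\mathcal E(\mu^++\chi^+,\mu^-+\chi^-)$ with $\xi=J^+$, use the Hamilton--Jacobi inequality $\partial_t J^++k\cdot\nabla J^+\le L$ together with $\varphi^-\le J^+$ and the boundary inequalities $\psi^-\le\varphi^-$, $J^+(0,\cdot)\le\psi^+$ on $\partial\Omega$, and then keep track of the boundary measures $\chi^\pm$ before taking the supremum and infimum. Your extra remark about truncating $J^+$ to use it as a test function is a harmless technical refinement that the paper passes over silently.
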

 \begin{proof}
 For an admissible $(\rho,\eta) \in \mathcal{E}_0(\mu^+,\mu^-)$, there are two nonnegative measures $\chi^+$ and $\chi^-$ which are concentrated on the boundary such that $(\rho,\eta) \in \mathcal{E}(\mu^+ + \chi^+,\mu^- + \chi^-)$. Let  $(J^+,\varphi^-)$ satisfy \eqref{Hamilton} with $\varphi^- \in C(\mathbb{R}^d)$ and $J^+ \in C^1(\mathbb{R}^+ \times \mathbb{R}^d)$. Then, we have 
\begin{align*}
&\qquad\qquad\qquad\qquad\qquad\qquad\int_{\Omega} \varphi^-(y)\,\mathrm{d}\mu^-(y) - \int_{\Omega} J^+(0,x)\,\mathrm{d}\mu^+(x)\\
&= \int_{\mathbb{R}^+}\int_{\mathbb{R}^d} \varphi^-(x)\,\mathrm{d}\eta(t,x) -\int_{\Omega} \varphi^-(y)\,\mathrm{d}\chi^-(y) + \int_{\Omega} J^+(0,x)\,\mathrm{d}\chi^+(x) - \int_{\Omega} J^+(0,x)\,\mathrm{d}\left[\mu^+ + \chi^+\right]\\ 
&\leq  \int_{\mathbb{R}^+}\int_{\mathbb{R}^d} J^+(t,x) \mathrm{d}\eta(t,x) -\int_{\Omega} \varphi^-(y) \mathrm{d}\chi^-(y)  + \int_{\Omega} J^+(0,x)\mathrm{d}\chi^+(x) - \int_{\Omega} J^+(0,x) \mathrm{d}\left[\mu^+ + \chi^+\right]\\ 
&=\int_{\mathbb{R}^+} \int_U \int_{\mathbb{R}^d} \left[\partial_t J^+(t,x) + k(t,x,u) \cdot \nabla J^+(t,x)\right]\mathrm{d}\rho_t(x,u)\mathrm{d}t -\int_{\Omega} \varphi^-\mathrm{d}\chi^- + \int_{\Omega} J^+(0,\cdot)\mathrm{d}\chi^+\\ 
&\leq \int_{\mathbb{R}^+} \int_U \int_{\mathbb{R}^d} L(t,x,u)\,\mathrm{d}\rho_t(x,u)\,\mathrm{d}t  +  \int_{\partial\Omega}\int_U \psi^+(x)\,\mathrm{d}\rho_0(x,u) -  \int_{\partial\Omega}\int_{\mathbb{R}^+} \psi^-(x)\,\mathrm{d}\eta(t,x).
\end{align*}\\
This implies that
\small{
\begin{eqnarray*} 
&&\sup\left\{\int_\Omega \varphi^- \,\mathrm{d} \mu^- - \int_\Omega J^+(0,\cdot) \,\mathrm{d}\mu^+ 
\,:\,\begin{array}{l}\varphi^- \in C(\mathbb{R}^d),\,J^+ \in C^1(\mathbb{R}^+ \times \mathbb{R}^d),\\
\,(J^+,\varphi^-) \,\,\,\mbox{satisfies}\,\,\,\eqref{Hamilton}\end{array}\right\}\\ \\
&&\leq \min_{(\rho,\eta) \in \mathcal{ E}_0(\mu^+,\mu^-)}\bigg\{\int_{\mathbb{R}^+} \int_U \int_{\mathbb{R}^d} L(t,x,u)\,\mathrm{d}\rho \,\,
+\,\, \int_{\partial\Omega}\int_U \psi^+(x)\,\mathrm{d}\rho_0 \,\,
-\,\, \int_{\partial\Omega}\int_{\mathbb{R}^+} \psi^-(x)\,\mathrm{d}\eta \bigg\}. 
\end{eqnarray*}\\
This concludes the proof. $\qedhere$
}
\end{proof}
Finally, we obtain the following.
\begin{theorem} \label{equalities between several problems} Under the above assumptions on $L$, $c$, $\psi^+$ and $\psi^-$,
\begin{enumerate}
\item The following equalities hold:
\begin{equation}\label{all}
{\mathcal T}_1(\mu^\pm; \psi^\pm) ={\mathcal D}_1(\mu^\pm; \psi^\pm) ={\mathcal D}_2(\mu^\pm; \psi^\pm) = {\mathcal T}_2(\mu^\pm; \psi^\pm).
\end{equation} 
\item Let $\pi$ be an optimal plan for ${\mathcal T}_1$ and its associated decomposition $\pi=\pi^{ii} + \pi^{ib} + \pi^{bi}$. Then, the corresponding admissible pairs $(\rho^{ii},\eta^{ii}),\,(\rho^{ib},\eta^{ib})$ and $(\rho^{bi},\eta^{bi})$ are, respectively, solutions for the Eulerian problems:
\begin{equation} 
\min\bigg\{\int_{\mathbb{R}^+} \int_U \int_{\mathbb{R}^d} L(t,x,u)\,\mathrm{d}\rho\,:\,(\rho,\eta) \in\mathcal{ E}(\mu^+ - \nu^+,\mu^- - \nu^-)\bigg\},
\end{equation} 
\begin{equation} \min\bigg\{\int_{\mathbb{R}^+} \int_U \int_{\mathbb{R}^d} L(t,x,u)\,\mathrm{d}\rho \,\,-\,\, \int_{\partial\Omega} \psi^-\,\mathrm{d}\chi^-\,:\,(\rho,\eta) \in\mathcal{ E}(\nu^+,\chi^-),\,\spt(\chi^-) \subset \partial\Omega\bigg\},
\end{equation} 
and
\begin{equation} \min\bigg\{\int_{\mathbb{R}^+} \int_U \int_{\mathbb{R}^d} L(t,x,u)\,\mathrm{d}\rho \,\,+\,\, \int_{\partial\Omega} \psi^+\,\mathrm{d}\chi^+\,:\,(\rho,\eta) \in\mathcal{ E}(\chi^+,\nu^-),\,\spt(\chi^+) \subset \partial\Omega\bigg\}.
\end{equation} 
\item The pair $(\rho,\eta)$, where $\rho=\rho^{ii} + \rho^{ib} + \rho^{bi}$ and $\eta=\eta^{ii} + \eta^{ib} + \eta^{bi}$, solves the Eulerian problem
\begin{equation*}
\min_{(\rho,\eta) \in\mathcal{ E}_0(\mu^+,\mu^-)}\bigg\{\int_{\mathbb{R}^+} \int_U \int_{\mathbb{R}^d} L(t,x,u)\,\mathrm{d}\rho \,\,+\,\, \int_{\partial\Omega}\int_U \psi^+(x)\,\mathrm{d}\rho_0 \,\,-\,\, \int_{\partial\Omega}\int_{\mathbb{R}^+} \psi^-(x)\,\mathrm{d}\eta\bigg\}.
\end{equation*}
\end{enumerate}
\end{theorem}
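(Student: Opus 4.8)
The proof assembles results already established in the excerpt. For part (1), I would close the cycle of inequalities
\[
{\mathcal T}_1 = {\mathcal D}_1 \le {\mathcal D}_2 \le {\mathcal T}_2 \le {\mathcal T}_1 .
\]
The first equality is Proposition \ref{existence of a minimizer}; the inequality ${\mathcal D}_1 \le {\mathcal D}_2$ is the proposition proving \eqref{D1:D2}; the inequality ${\mathcal D}_2 \le {\mathcal T}_2$ is Proposition \ref{the Eulerian dual is less than the Eulerian primal}; and ${\mathcal T}_2 \le {\mathcal T}_1$ is the consequence \eqref{T2:T1} of Proposition \ref{Eulerian and Kantorovich with the same cost}. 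Chaining these forces all four quantities to coincide, which also shows that each of the displayed inequalities is in fact an equality.

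**Parts (2) and (3): attainment and optimality of the constructed pairs.** Here the idea is to run the chain of equalities backwards on the specific competitor built from an optimal $\pi$. Fix an optimal $\pi$ for ${\mathcal T}_1$ with its decomposition $\pi = \pi^{ii}+\pi^{ib}+\pi^{bi}$ (recalling from Section \ref{Sec. 3} that we may take $\pi^{bb}=0$), and let $(\rho^{ii},\eta^{ii})$, $(\rho^{ib},\eta^{ib})$, $(\rho^{bi},\eta^{bi})$ be the admissible pairs produced in Lemmas \ref{Eulerian ii}, \ref{Eulerian ib}, \ref{Eulerian bi}. Proposition \ref{Eulerian and Kantorovich with the same cost} shows that $(\rho,\eta) := (\rho^{ii}+\rho^{ib}+\rho^{bi},\,\eta^{ii}+\eta^{ib}+\eta^{bi}) \in \mathcal{E}_0(\mu^+,\mu^-)$ realizes the same total cost as $\pi$, namely ${\mathcal T}_1$; since ${\mathcal T}_1 = {\mathcal T}_2$ by part (1), this $(\rho,\eta)$ attains the infimum defining ${\mathcal T}_2$, proving (3) (and in particular that the infimum in ${\mathcal T}_2$ is attained). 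For part (2), I would argue that if any one of the three summands $(\rho^{ii},\eta^{ii})$, $(\rho^{ib},\eta^{ib})$, $(\rho^{bi},\eta^{bi})$ were strictly suboptimal for the corresponding subproblem \eqref{(P1)}, \eqref{(P2)}, \eqref{(P3)} (in their Eulerian form), one could replace that block by a strictly cheaper admissible pair — existence of competing admissible pairs for each subproblem follows from the construction in the relevant lemma applied to a cheaper transport plan, together with the fact, recorded in the proposition following Proposition \ref{existence of a minimizer}, that $\pi^{ii}$, $(\pi^{ib},T^{ib}_{\#}\nu^+)$, $(\pi^{bi},T^{bi}_{\#}\nu^-)$ already solve \eqref{(P1)}–\eqref{(P3)}. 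Summing the three blocks after such a replacement would produce a competitor for ${\mathcal T}_2$ strictly below ${\mathcal T}_1 = {\mathcal T}_2$, a contradiction. Hence each block is optimal for its Eulerian subproblem.

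**Where the care is needed.** The routine part is the inequality chain; the delicate point is the decoupling argument in part (2). One must verify that the Eulerian cost is genuinely additive across the three blocks — which is exactly the content of the computations in Lemmas \ref{Eulerian ii}, \ref{Eulerian ib}, \ref{Eulerian bi} and Proposition \ref{Eulerian and Kantorovich with the same cost} — and, conversely, that from any admissible competitor for one Eulerian subproblem one can manufacture a $\pi$-type competitor (or directly a new block) with the matching cost, so that strict improvement in a subproblem propagates to strict improvement in ${\mathcal T}_2$. This requires pairing the measures $\nu^+ = \pi_x^{ib}$, $\nu^- = \pi_y^{bi}$ correctly across the interior/boundary interface and invoking the no-arbitrage condition \eqref{g_1g_2.0} to rule out boundary-to-boundary shortcuts. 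Once additivity and this transfer are in hand, parts (2) and (3) follow formally from the equality ${\mathcal T}_1={\mathcal T}_2$ proved in part (1).
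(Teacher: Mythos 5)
Your proposal is correct and follows essentially the same route as the paper: part (1) is obtained by chaining \eqref{T1:D1}, \eqref{D1:D2}, \eqref{D2:T2} and \eqref{T2:T1}, and parts (2)--(3) are deduced from Lemmas \ref{Eulerian ii}, \ref{Eulerian ib}, \ref{Eulerian bi}, Proposition \ref{Eulerian and Kantorovich with the same cost}, the optimality of $\pi$ and of its pieces in \eqref{(P1)}--\eqref{(P3)}, together with the equalities of part (1). Your replace-a-block contradiction argument is simply a more explicit rendering of the paper's ``follows directly'' step, so no substantive difference or gap is present.
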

\begin{proof}
For (\ref{all}) it suffices to combine (\ref{T1:D1}), (\ref{T2:T1}), (\ref{D1:D2}) and (\ref{D2:T2}). The rest follows directly from the optimality of the transport plan $\pi=\pi^{ii} + \pi^{ib} + \pi^{bi}$, the optimalities of $\pi^{ii},\,\pi^{ib}$ and $\pi^{bi}$ in problems \ref{(P1)}, \ref{(P2)} \& \ref{(P3)}, respectively, 
 the equalities in (\ref{all}), as well as 
Lemmas \ref{Eulerian ii}, \ref{Eulerian ib} \& \ref{Eulerian bi} and Proposition \ref{Eulerian and Kantorovich with the same cost}. $\qedhere$
\end{proof}
\section{Uniqueness of optimal transport plans and associated stopping times}\label{Sec.5}

 In this section, we show that, under additional assumptions on the Hamiltonian $H$, the optimal transport plan (resp. stopping plan) (from interior to interior) $\pi^{ii}$ (resp. $\eta^{ii}$) is a transport map (resp. a stopping time). More precisely, we prove that the optimal transport plan $\pi^{ii}$ is concentrated on a graph $y=T^{ii}(x)$ and the optimal stopping distribution $\eta^{ii}$ will be concentrated on a graph $(t,y)=(\tau,T^{ii})(x)$, as soon as the source measure $\mu^+$ is absolutely continuous with respect to the Lebesgue measure $\mathcal{L}^d$.  The map is constructed by solution to the Hamiltonian flow with the end time determined by the Pontryagin transversality condition. The argument will be sketched as this is a straight forward extension of the result of \cite{Aaron}. We also show that the maps $T^{ib}$ and $T^{bi}$ are unique under these assumptions, and given by similar constructions.

Let $(J^+(0,\cdot)\,,\varphi^-)$ be a solution for the dual problem \eqref{Eulerian classique with boundary} between $\mu^+ - \nu^+$ and $\mu^- - \nu^-$. Recall that the potential $J^+$ is given by the following formula (for more details about the following optimal control problem with free stopping time, we refer to \cite{Bardi}): 
$$J^+(t,x)=-\min_{u,\,\tau}\bigg\{\int_t^{\tau}L\big(s,\gamma^{t,x}_u(s),u(s)\big)\,\mathrm{d}s \,-\, \varphi^-(\gamma^{t,x}_u(\tau)) \bigg\},\,\,\,\forall\,\,(t,x) \in \mathbb{R}^+ \times \mathbb{R}^d.$$
Let $\gamma:=\gamma^{x,y}$ be an optimal trajectory from a point $x$ to another one $y$, $u:=u^{x,y}$ be the corresponding optimal control and $\tau:=\tau^{x,y}$ be the corresponding stopping time. First, assume that:\smallskip

(H7)\quad The Hamiltonian $H$ is $C_{loc}^{1,1}(\mathbb{R}^+ \times \mathbb{R}^d \times U)$,

(H8)\quad $u\mapsto k(t,x,u) \cdot p - L(t,x,u)$ has a unique maximizer for all $(t,x,p) \in \mathbb{R}^+ \times \mathbb{R}^d \times \mathbb{R}^d$.\smallskip

\noindent From the Pontryagin maximum principle (see, for instance, \cite{Clarke,Lev}), for any $u,\gamma,\tau$ that minimize the cost
$$
  \int_0^\tau L\big(s,\gamma(s),u(s)\big)ds -\varphi^-\big(\gamma(\tau)\big)+ J^+\big(0,\gamma(0)\big),
$$
there is a Lipschitz continuous arc $p:\left[0,\tau\right] \mapsto\mathbb{R}^d$ that solves: 
$$\dot{p}(t)=-p(t)^{T} \nabla_x k\big(t,\gamma(t),u(t)\big) + \nabla_x L\big(t,\gamma(t),u(t)\big),\,\,\mbox{for a.e.}\,\,\,t \in (0,\tau),$$
and satisfies the maximum principle:
$$H\big(t,\gamma(t),p(t)\big)=p(t) \cdot k(t,\gamma(t),u(t)) - L\big(t,\gamma(t),u(t)\big),$$
with boundary conditions at points of differentiability of $J^+(0,\cdot)$ and $\varphi^-$ (see, for instance, \cite{Cannarsa}):
$$
  p(0)=\nabla J^+\big(0,\gamma(0)\big), \quad p(\tau)=\nabla \varphi^-\big(\gamma(\tau)\big),
$$
and the transversality condition:
$$H\big(\tau,\gamma(\tau),p(\tau)\big)=0.$$
It follows that for a.e.\ $t\in [0,\tau]$
\begin{align*}
\frac{d}{d t}H\big(t,\gamma(t),p(t)\big)=&\ p(t) \cdot \partial_t k\big(t,\gamma(t),u(t)\big) + p(t) \cdot \nabla_x k\big(t,\gamma(t),u(t)\big) \,\dot{\gamma}(t)\\
&\ - \partial_t L\big(t,\gamma(t),u(t)\big) - \nabla_x L\big(t,\gamma(t),u(t)\big)\cdot \dot{\gamma}(t)\\
&\ \qquad+\,\, \dot{p}(t) \cdot k\big(t,\gamma(t),u(t)\big)\\
=&\ p(t) \cdot \partial_t k\big(t,\gamma(t),u(t)\big)  - \partial_t L\big(t,\gamma(t),u(t)\big).
\end{align*}
In addition, we suppose that\smallskip

\qquad \qquad \qquad \qquad\qquad\qquad\qquad(H9) \quad $\partial_t k=0$ on $\mathbb{R}^+ \times \mathbb{R}^d \times U.$\smallskip \\
Then, we get
$$\frac{d}{d t}H\big(t,\gamma(t),p(t)\big) = - \partial_t L\big(t,\gamma(t),u(t)\big).$$
Now, let us assume the following monotonicity condition on the running cost $L$ with respect to time:

\qquad \qquad \qquad \qquad\qquad(H10)  \quad  $\partial_t L > 0$\, or \,$\partial_t L <0 $\, on \,$\mathbb{R}^+ \times \mathbb{R}^d \times U.$ \smallskip\\
Then, this implies that along an optimal trajectory $\gamma$, we have, under the assumption (H10), the following:
\begin{equation} \label{unique stop}
  H\big(t,\gamma(t),p(t)\big)=0 \Leftrightarrow t=\tau.
\end{equation}
On the other hand, we have that, for initial points $\gamma(0)=x$ where $J^+(0,\cdot)$ is differentiable, the pair $(\gamma,p)$ is the unique solution for the following Hamiltonian system:
\begin{equation}\label{Hamiltonian_flow}
\begin{cases}
\dot{\gamma}(t)=\nabla_p H(t,\gamma(t),p(t)),\\
\dot{p}(t)=-\nabla_x H(t,\gamma(t),p(t)),\\
\gamma(0)=x,\\
p(0)=\nabla J^+(0,x).
\end{cases}
\end{equation} 
Hence, the stopping time $\tau$ is uniquely determined by $x$ (almost everywhere): $\tau=\tau(x)$ while the endpoint $y$ of the trajectory $\gamma$ is given by $y=\gamma(\tau(x))$. This constructs the maps $T^+(x)=\gamma(\tau(x))$ (depending on $\nabla J^+(0,x)$ through the initial condition of $p$), where $T^{ii}(x)=T^+(x)$ whenever $T^+(x)\in \Omega$ and $T^{ib}(x)=T^+(x)$ when $T^+(x)\in \partial \Omega$.

In other words, the optimal transport plan $\pi^{ii}$ is concentrated on a graph $y=T^{ii}(x)$ as soon as the source measure $\mu^+$ is absolutely continuous with respect to the Lebesgue measure (for the case $\partial_t L<0$ we also require that $\mu^+$ and $\mu^-$ are disjoint to handle the mass stopping at $\tau=0$). Moreover, the optimal stopping distribution $\eta^{ii}$ is concentrated on the graph $(t,y)=(\tau^{x,T^{ii}(x)},T^{ii}(x))=(\tau(x),T^{ii}(x))$. Then, we have
$$\int_{\Omega \times \Omega} c(x,y)\,\mathrm{d}\pi^{ii}(x,y)=\int_\Omega c(x,T^{ii}(x))\,\mathrm{d}(\mu^+ - \nu^+)(x),$$
and
$$\int_{\mathbb{R}^+}\int_{\mathbb{R}^d} \xi(t,y)\,\mathrm{d}\eta^{ii}(t,y)=\int_{\Omega} \xi(\tau^{x,T^{ii}(x)},T^{ii}(x))\,\mathrm{d}(\mu^+ - \nu^+)(x),\,\,\mbox{for all}\,\,\,\xi \in C_0(\mathbb{R}^+ \times \mathbb{R}^d).$$

\noindent Now, we may assume that
$$\varphi^-(y)=\inf\{J^+_{\varphi^-}(t,y)\,:\,t \geq 0\},\,\,\mbox{for every}\,\,y \in \mathbb{R}^d,$$
because replacing $\varphi^-$ with this infimum can only increase the dual value.
First, let us suppose that \,$\partial_t L >0$\, on \,$\mathbb{R}^+ \times \mathbb{R}^d \times U$. In this case, it is easy to see that $\{J^+_{\varphi^-}(t,y)\}_{t \in \mathbb{R}^+}$ is strictly decreasing. Then, we define the {\it{free boundary}} function $\tau^{[-1]}$ as follows:
$$\tau^{[-1]}(y)=\inf\left\{t \geq 0 \,:\,J^+_{\varphi^-}(t,y)=\varphi^-(y)\right\},\,\,\mbox{for every}\,\,y \in \mathbb{R}^d.$$
Let $\gamma$ be an optimal trajectory with $\gamma(\tau)=y$ and suppose that $\varphi^-$ is differentiable at $y$ (which is the case for a.e. $y$). Then, one can show that the following holds (see \cite{Aaron} for more details):
$$H\big(\tau^{[-1]}(y),y,\nabla \varphi^-(y)\big)=0.$$
But, recalling \eqref{unique stop} and the terminal condition from the Pontryagin maximum principle that $p(\tau)=\nabla \varphi^-(y)$, we infer that $\tau^{[-1]}(y)=\tau.$
In this way, the pair $(\gamma,p)$, where $p$ is the associated dual arc with $\gamma$, turns out to be the unique solution of the following system (with reverse time): 
\begin{equation}\label{Hamiltonian_flow reverse}
\begin{cases}
\dot{\gamma}(t)=\nabla_p H\big(t,\gamma(t),p(t)\big),\\
\dot{p}(t)=-\nabla_x H(t,\gamma(t),p(t)),\\
\gamma\big(\tau^{[-1]}(y)\big)=y,\\
p\big(\tau^{[-1]}(y)\big)=\nabla \varphi^-(y).
\end{cases}
\end{equation} 
We may now define the map $T^-(y)=\gamma(0)$, which is the inverse of $T^{ii}$ when $T^-(y)\in \Omega$, and is the unique map $T^{bi}(y)$ when $T^-(y)\in \partial \Omega$.
In other words,
the optimal transport plan $\pi^{ii}$ is concentrated on a graph $x=[T^{ii}]^{-1}(y)$ and the optimal stopping distribution $\eta^{ii}$ is concentrated on a graph $t=\tau^{[-1]}(y)$, provided that the target measure $\mu^- \ll \mathcal{L}^d$. 
 Similarly, one can prove the same result in the case where $\partial_t L <0$ on $\mathbb{R}^+ \times \mathbb{R}^d \times U$. The only difference is that, now, the {\it{free boundary}} function $\tau^{[-1]}$ becomes
$$\tau^{[-1]}(y)=\sup\{t \geq 0 \,:\,J^+_{\varphi^-}(t,y)=\varphi^-(y)\},\,\,\mbox{for every}\,\,y \in \mathbb{R}^d.$$

In order to disregard any transport of goods from boundary onto boundary consumers, we suppose now that 
\begin{equation}\label{strict}
\psi^-(y) - \psi^+(x) < c(x,y),\,\,\,\mbox{for all}\,\,\,x,\,y \in \partial\Omega.
\end{equation}
Then, we have the following:
\begin{theorem}
Suppose that $\mu^+,\,\mu^- \ll \mathcal{L}^d$ and that (\ref{strict}) holds along with (H0)-(H10) with $\partial_tL>0$. Then, Problem  \eqref{Kantorovich with boundary costs} has a unique optimal transport plan, where $\pi^{ii}+\pi^{ib}$ is supported on the set $\{(x,T^+(x))\}_{x\in \Omega}$ and $\pi^{ii}+\pi^{bi}$ is supported on $\{(T^-(y),y)\}_{y\in \Omega}$.  In the case that $\partial_t L<0$, the result holds where the unique optimal transport plan stops all overlapping mass of $\mu^+\wedge \mu^-$ along the diagonal, and the remainder is supported on the graphs of $T^+$ and $T^-$ as above.
\end{theorem}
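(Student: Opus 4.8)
The plan is to combine three ingredients already in place: the canonical decomposition $\pi=\pi^{ii}+\pi^{ib}+\pi^{bi}$ of an optimal plan from Section~\ref{Sec. 3}, the full duality chain ${\mathcal T}_1={\mathcal D}_1={\mathcal D}_2={\mathcal T}_2$ from Theorem~\ref{equalities between several problems}, and the Pontryagin/Hamiltonian-flow analysis of Section~\ref{Sec.5} (which extends \cite{Aaron}). First I would fix, once and for all, an optimal pair $(J^+(0,\cdot),\varphi^-)$ for ${\mathcal D}_2$, with $J^+$ the value function of Section~\ref{Sec.5} and $\varphi^-=\inf_{t\ge 0}J^+(t,\cdot)$. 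Using the strict no-arbitrage inequality~\eqref{strict}, I would observe that \emph{every} optimal plan $\pi$ for~\eqref{Kantorovich with boundary costs} satisfies $\pi(\partial\Omega\times\partial\Omega)=0$, since deleting any boundary-to-boundary mass strictly lowers the cost as $c(x,y)+\psi^+(x)-\psi^-(y)>0$ there; hence the decomposition of Section~\ref{Sec. 3} and the associated import/export masses $\xi^\pm$ and interior marginals $\nu^\pm$ are canonically attached to $\pi$.

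Next I would record the complementary-slackness relations obtained by pairing the fixed dual pair with an arbitrary optimal $\pi$: expanding ${\mathcal T}_1={\mathcal D}_1$ and using $\psi^-\le\varphi^-\le J^+(0,\cdot)\le\psi^+$ on $\partial\Omega$, $\xi^\pm\ge 0$ and $\varphi^-(y)-J^+(0,x)\le c(x,y)$, one forces $\varphi^-(y)-J^+(0,x)=c(x,y)$ for $\pi$-a.e.\ $(x,y)$, $J^+(0,\cdot)=\psi^+$ on $\spt(\xi^+)$, and $\varphi^-=\psi^-$ on $\spt(\xi^-)$. Then I would run the Section~\ref{Sec.5} argument in both time directions. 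Since $\mu^+\ll\mathcal{L}^d$ and $J^+(0,\cdot)$ is Lipschitz, $J^+(0,\cdot)$ is differentiable at $\mu^+$-a.e.\ $x$; at such $x$ the identity $J^+(0,x)=\max_z\big(\varphi^-(z)-c(x,z)\big)$ is attained at $z=y$, so an optimal trajectory from $x$ to $y$ realizes the minimizer in the definition of $J^+(0,x)$, which is unique by standard value-function theory under (H7)--(H8) (see \cite{Cannarsa,Bardi}); hence this trajectory is the Hamiltonian flow~\eqref{Hamiltonian_flow} issued from $(x,\nabla J^+(0,x))$, stopped at the transversality time, which is unique by (H9)--(H10) and~\eqref{unique stop}. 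Therefore $y=T^+(x)$ for $(\pi^{ii}+\pi^{ib})$-a.e.\ $(x,y)$, i.e.\ $\pi^{ii}+\pi^{ib}=(Id,T^+)_{\#}\mu^+$. Symmetrically, $\mu^-\ll\mathcal{L}^d$ and $\varphi^-=\inf_t J^+(t,\cdot)$ identify the free boundary $\tau^{[-1]}(y)$ with the actual stopping time and force $(\gamma,p)$ to solve the reverse flow~\eqref{Hamiltonian_flow reverse}, whence $x=T^-(y)$ for $(\pi^{ii}+\pi^{bi})$-a.e.\ $(x,y)$, i.e.\ $\pi^{ii}+\pi^{bi}=(T^-,Id)_{\#}\mu^-$.

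Since the push-forwards $(Id,T^+)_{\#}\mu^+$ and $(T^-,Id)_{\#}\mu^-$ depend only on the fixed dual pair and on $\mu^\pm$, they are the same for every optimal $\pi$; restricting them to $\Omega^\circ\times\partial\Omega$, $\partial\Omega\times\Omega^\circ$ and $\Omega^\circ\times\Omega^\circ$ recovers $\pi^{ib}$, $\pi^{bi}$ and $\pi^{ii}$, the two expressions for $\pi^{ii}$ coinciding because $T^+$ and $T^-$ are the forward and backward traces of a single characteristic flow. This yields both uniqueness of $\pi$ and the stated supports. For $\partial_t L<0$ the only change is that $t\mapsto J^+(t,\cdot)$ is increasing, the free boundary becomes a supremum, and on the $t=0$ contact set the minimizing trajectory is constant; since $L\ge\beta_2>0$, moving any overlap $\mu^+\wedge\mu^-$ off the diagonal is strictly suboptimal (it produces a removable cycle of positive cost), so every optimal plan stops $\mu^+\wedge\mu^-$ along the diagonal and matches the complements $\mu^\pm-(\mu^+\wedge\mu^-)$ through $T^\pm$ exactly as above.

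I expect the main obstacle to be the uniqueness of the decomposition itself, i.e.\ showing that $\nu^+$ and $\nu^-$ (which part of $\mu^+$ leaves through $\partial\Omega$, which part of $\mu^-$ is imported) are forced. The resolution is that, once the dual pair is fixed, the slackness conditions $J^+(0,\cdot)=\psi^+$ on $\spt(\xi^+)$ and $\varphi^-=\psi^-$ on $\spt(\xi^-)$, combined with the absolute continuity of $\mu^\pm$ and the single-valuedness of the Hamiltonian flow under (H7)--(H10), promote $T^\pm$ to genuine maps defined $\mu^\pm$-a.e.; the delicate point is checking that the ``left'' description of $\pi^{ii}$ via $T^+$ and the ``right'' description via $T^-$ are mutually consistent, which holds precisely because these are two readings of one and the same characteristic system rather than independently chosen Borel selectors.
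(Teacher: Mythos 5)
Your route is the paper's own: fix the optimal dual pair $(J^+(0,\cdot),\varphi^-)$, use duality/complementary slackness to get $\varphi^-(y)-J^+(0,x)=c(x,y)$ on the support of the plan, apply the Pontryagin principle with $p(0)=\nabla J^+(0,x)$ and $p(\tau)=\nabla\varphi^-(y)$ at points of differentiability (Rademacher plus $\mu^\pm\ll\mathcal{L}^d$), identify $(\gamma,p)$ with the unique solution of the forward flow \eqref{Hamiltonian_flow} or the reverse flow \eqref{Hamiltonian_flow reverse}, and conclude uniqueness because a plan with prescribed marginal concentrated on a graph must be the corresponding push-forward. Two steps, however, are not right as written.

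First, the case $\tau=0$, where the dual arc and the transversality condition give no information, is not covered by your appeal to \eqref{unique stop}; you must rule out that mass sitting at a contact point $J^+(0,x)=\varphi^-(x)$ could also be moved along a nontrivial optimal trajectory, which would destroy the map structure. The paper handles this separately: for $\partial_t L>0$ one has $H(0,x,\nabla J^+(0,x))\le 0$ and $H$ strictly decreases along optimal trajectories, so $H=0$ can only occur at $\tau=0$; for $\partial_t L<0$ it is genuinely possible that $H(0,x,\nabla J^+(0,x))<0$ and an optimal trajectory from a contact point has $\tau>0$. Hence your statement that ``on the $t=0$ contact set the minimizing trajectory is constant'' in the $\partial_t L<0$ case is backwards --- this failure of uniqueness at contact points is exactly why the overlapping-mass issue arises. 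Second, your strictness argument for stopping $\mu^+\wedge\mu^-$ on the diagonal does not follow from $L\ge\beta_2>0$: positivity only yields the non-strict inequality $c(y,z)\le c(y,x)+c(x,z)$, and for time-independent costs (e.g.\ $k(t,x,u)=u$, $L\equiv 1$, collinear points) the ``cycle removal'' saves nothing, which is the classical source of non-uniqueness for metric costs. The strict gain in the paper's swap of $(y,x),(x,T^+(x))$ for $(x,x),(y,T^+(x))$ comes from $\partial_t L<0$ via concatenation: running the leg from $x$ to $T^+(x)$ after time $\tau(y)>0$ is strictly cheaper than running it from time $0$, i.e.\ $\int_0^{\tau(y)+\tau(x)}L\big(s,\gamma^y(s),u(s)\big)\,\mathrm{d}s<\int_0^{\tau(x)}L\big(s,\gamma^x(s),u(s)\big)\,\mathrm{d}s+\int_0^{\tau(y)}L\big(s,\gamma^y(s),u(s)\big)\,\mathrm{d}s$. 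With these two repairs your argument coincides with the paper's proof.
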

\begin{proof}
The proof has essentially been done in the discussion leading up to the theorem.  If $(x,y)$ is in the support of $\pi_{ii}$ then we have that $\varphi^-(y)-J^+(0,x)=c(x,y)$ and since 
$$
  0\leq \int_0^\tau L\big(s,\gamma(s),u(s)\big)-\varphi^-\big(\gamma(\tau)\big)+J^+\big(0,\gamma(0)\big)
$$
for all $(u,\gamma,\tau)$, we find that there is an optimal trajectory with $\gamma(0)=x$ and $\gamma(\tau)=y$.  When $\tau>0$ we have from the Pontryagin maximum principle a dual arc $p$ satisfying $p(0)=\nabla J^+(0,x)$ and $p(\tau)=\nabla \varphi^-(y)$ at points of differentiabiltiy of $J^+(0,\cdot)$ and $\varphi^-$, which occur almost everywhere by Rademacher's theorem.
As discussed above, this allows identifying $(\gamma,p)$ with the unique solution to the Hamiltonian system, either forward in time (\ref{Hamiltonian_flow}) or with reverse time (\ref{Hamiltonian_flow reverse}). In the first case we have realized the support of $\pi^{ii}+\pi^{ib}$ as a graph, and in the second case we realize the support of $\pi^{ii}+\pi^{bi}$ as a graph.

To handle the possibility of $\tau=0$, we note this implies that $J^+(0,x)=\varphi^-(x)$. For the case that $\partial_t L>0$, $J^+(t,x)=\varphi^-(x)$ for all $t$ and thus $H(0,x,\nabla J^+(0,x))\leq 0$.  Since the Hamiltonian can only decrease along the trajectory the only solution is with $\tau(x)=0$.  In the case that $\partial_t L<0$, there is the possibility that $H(0,x,\nabla J^+(0,x))< 0$ and there is $\tau(x)>0$.  In fact, in this case, if there is overlapping mass of $\mu^+$ and $\mu^-$ it most stop at $\tau=0$ and the remainder will stop at $\tau(x)$.  If the overlapping mass does not stop, then there $(y,x)$ in the support of $\pi$ with $y\not=x$. We can acheive the same transport with lower cost by swapping the values of $\pi$ at $(y,x)$ and $(x,T^+(x))$ with $(y,T^+(x))$ and $(x,x)$.  Indeed, with the concatenated trajectory from $y$ to $T^+(x)$ we have
$$
  \int_0^{\tau(y)+\tau(x)}L\big(s,\gamma^y(s),u(s)\big)ds<\int_0^{\tau(x)}L\big(s,\gamma^x(s),u(s)\big)ds+\int_0^{\tau(y)}L\big(s,\gamma^y(s),u(s)\big)ds,
$$
showing that the optimizer must stop all overlapping mass at $\tau=0$.

We have now characterized the support of $\pi^{ii}+\pi^{ib}$ as living on the graph of a function, from which it follows that these measures are unique from the fact that the functional in \eqref{Kantorovich with boundary costs} is linear with convex constraints. Uniqueness of $\pi^{bi}$ follows similarly. $\qedhere$
\end{proof}
\section{The general import/export case.} \label{Sec.6}
In this section,
	we briefly consider the variation of the problem where the reserve mass is taken from a set $K^+$ with cost $\psi^+$ and deposited in the set $K^-$ with cost $-\psi^-$, where $K^+$ and $K^-$ are two compact sets of $\mathbb{R}^d$. This variation is directly equivalent to the problem we have studied for $K^+=K^-=\partial \Omega$.
	We define the admissible set of transport plans for this variant to be
	$$
		\mathcal{P}_K(\mu^+,\mu^-)=\bigg\{\pi \in \mathcal{M}^+(\mathbb{R}^d\times \mathbb{R}^d): (\pi_x)_{|\mathbb{R}^d\backslash K^+}=\mu^+,\ (\pi_y)_{|\mathbb{R}^d\backslash K^-}=\mu^-\bigg\}.
	$$
	Then, we consider the following variant of \eqref{Problem 1}
	$$
		\min\bigg\{\int_{\mathbb{R}^d\times \mathbb{R}^d} c(x,y)d\pi+\int_{K^+}\psi^+d\pi_x-\int_{K^-}\psi^- d\pi_y : \pi\in \mathcal{P}_K(\mu^+,\mu^-)\bigg\}.
	$$
	Again, we assume that the costs $\psi^+$ and $\psi^-$ satisfy the no arbitrage assumption (\ref{g_1g_2}), which becomes
	$$\psi^-(y)-\psi^+(x)\leq c(x,y),\ \hbox{for all $(x,y)\in K^+ \times K^-$}.
	$$
As in Proposition \ref{existence of a minimizer}, we get, under this assumption, the following duality result:
{\small 
\begin{align*} 
& \min \left\{\int_{\Omega\times\Omega}c(x,y)\,\mathrm{d}\pi +
\int_{K^+}\psi^+\,\mathrm{d}\pi_x - \int_{K^-}\psi^-\,\mathrm{d}\pi_y\,:\,\pi \in \mathcal{P}_K (\mu^+,\mu^-)\right\}\\
\qquad=&\ \sup_{\varphi^\pm \in C(\Omega)}\left\{\int_\Omega \varphi^- \mathrm{d} \mu^- - \int_\Omega \varphi^+ \mathrm{d}\mu^+
: \begin{array}{l}\psi^- \leq \varphi^-\,\,\mbox{on}\,\,\,
K^-,\,\,\varphi^+ \leq \psi^+\,\,\mbox{on}\,\,\,
K^+,\\
(-\varphi^+) \oplus \varphi^- \leq c\end{array}\right\}.
\end{align*} 
}
On the other hand, one can also find an Eulerian formulation 
which becomes:
$$\min_{(\rho,\eta) \in \mathcal{E}_K(\mu^+,\mu^-)}\bigg\{\int_{\mathbb{R}^+} \int_U \int_{\mathbb{R}^d} L(t,x,u)\,\mathrm{d}\rho \,\,+\,\, \int_{K^+}\int_U \psi^+(x)\,\mathrm{d}\rho_0 \,\,-\,\, \int_{K^-} \int_{\mathbb{R}^+}\psi^-(x)\,\mathrm{d}\eta\bigg\},$$
where 
$$ \mathcal{E}_K(\mu^+,\mu^-):=\bigg\{(\rho,\eta) \in \mathcal{E}(\mu^+ + \chi^+,\mu^- + \chi^-) \,:\,\chi^+ \in \mathcal{M}^+(K^+) \,\,\,\,\mbox{and}\,\,\,\chi^- \in \mathcal{M}^+(K^-)\bigg\}.$$
Finally, the dual of the Eulerian problem can be expressed as:
	$$
		\sup_{\varphi^-\in C(\mathbb{R}^d)}\bigg\{\int_{\mathbb{R}^d} \varphi^- d\mu^- -\int_{\mathbb{R}^d}J^+(0,\cdot)d\mu^+\ :\ \begin{array}{l} \psi^- \leq \varphi^-\,\,\mbox{on}\,\,\,
K^-\,\mbox{and}\,\,\,J^+(0,\cdot) \leq \psi^+\,\,\mbox{on}\,\,\,
K^+,\\
(J^+,\varphi^-)\,\,\,\mbox{solves}\,\,\,\eqref{HJBV}
\end{array}\bigg\}.
	$$

\section{An example}\label{Sec.7}
  We consider a simple one dimensional example where the particles move at unit speed and the cost depends only on the end time.  We let $\Omega =(0,2)$, $k(t,x,\pm 1)=\pm 1$, and $L(t,x,\pm 1) = g'(t)$, where $g(0)=0$ and $g$ is increasing. Let us consider the case where $\mu^+$ is the uniform distribution on $[0,1]$ to $\mu^-$ is uniform on $[1,2]$.  We suppose that 
$$
  \psi^-(y) = \begin{cases} -p^- & y=0,\\
  -\infty &  y=2,
  \end{cases}\ \ \ \ \ \psi^+(x) = \begin{cases} +\infty & x=0,\\
  p^+ &  y=2,
  \end{cases}
$$
(where $+\infty$ could be a sufficiently large finite number).
To satisfy the no-arbitrage condition, 
 we require that
$$
  p^-+p^+\geq -g(2).
$$
The Hamiltonian is 
$$
  H(t,x,p) = |p|-g'(t),
$$
and the unique solution is easily found in the cases when either $g$ is strictly convex or $g$ is strictly concave.\\

\noindent \textbf{Strictly increasing Lagrangian.} Here, we assume that $g$ is strictly convex so that $t\mapsto L(t,x,u)$ is strictly increasing. A simple ansatz for an optimal map is
$$
  \begin{cases}
  T^{ib}(x) = 0, \,\,&\ 0\leq x<x_1;\\
  T^{ii}(x) = x+1-x_1, \,\,&\ x_1\leq x\leq 1;\\
  T^{bi}(y)=2,\,\,&\ 2-x_1< y\leq 2.
  \end{cases}
$$
We have made the decomposition so that $\nu^+$ has density 1 on $[0,x_1)$ and $\nu^-$ has density $1$ on $(2-x_1,2]$. The free boundary (end time) on $[1,2]$ is
$$
  \tau^{[-1]}(y)=\begin{cases}
    1-x_1, \,\,\,\,& 1 \leq y\leq 2-x_1;\\
    2-y, \,\,&\ 2-x_1\leq y\leq 2.
    \end{cases}
$$
Now, we can solve $\varphi^-$ from the equations,\\
$$
  \begin{cases}
  0 = |\nabla \varphi^-(y)|-g'\big(\tau^{[-1]}(y)\big), \,\,\,\, 1\leq y\leq 2;\\
  -p^- -g(x_1)=\varphi^-(1)-g(1-x_1), 
  \end{cases}
$$
to get
$$
  \varphi^-(y)=\begin{cases}
    -p^--g(x_1)+g(1-x_1)+ g'(1-x_1)(y-1),&\ 1\leq y\leq 2-x_1;\\
    -p^--2g(x_1)+g(1-x_1)+g'(1-x_1)(1-x_1) + g(2-y), &\ 2-x_1\leq y\leq 2.
  \end{cases}
$$
\\
The total cost is
\begin{align*}
  x_1(p^-+p^+)+2\int_0^{x_1} g(x)dx + (1-x_1) g(1-x_1),
\end{align*}
which we differentiate with respect to $x_1$ to obtain the following optimality criteria 
$$
  \varphi^-(2)=p^+,
$$
which concurs with $\psi^+(2)$ because the point $2$ happens to be in the support of $\mu^-$.

It is now straightforward to calculate $J^+(t,x)$ and verify that this is indeed the optimal solution.
In Figure \ref{fig-1} we illustrate the primal and dual solutions for the case that $g(t)=\frac{1}{2}t^2$, $p^-=p^+=\frac{1}{16}$, in which case $x_1=\frac{1}{2}$. Note that the continuation of $\varphi^-$ outside of the support of $\mu^-$ is arbitrary so long as it is sufficiently small.
\begin{figure}[ht]
    \centering
    \begin{subfigure}[b]{0.5\textwidth}
    \centering
          \includegraphics[width=\linewidth]{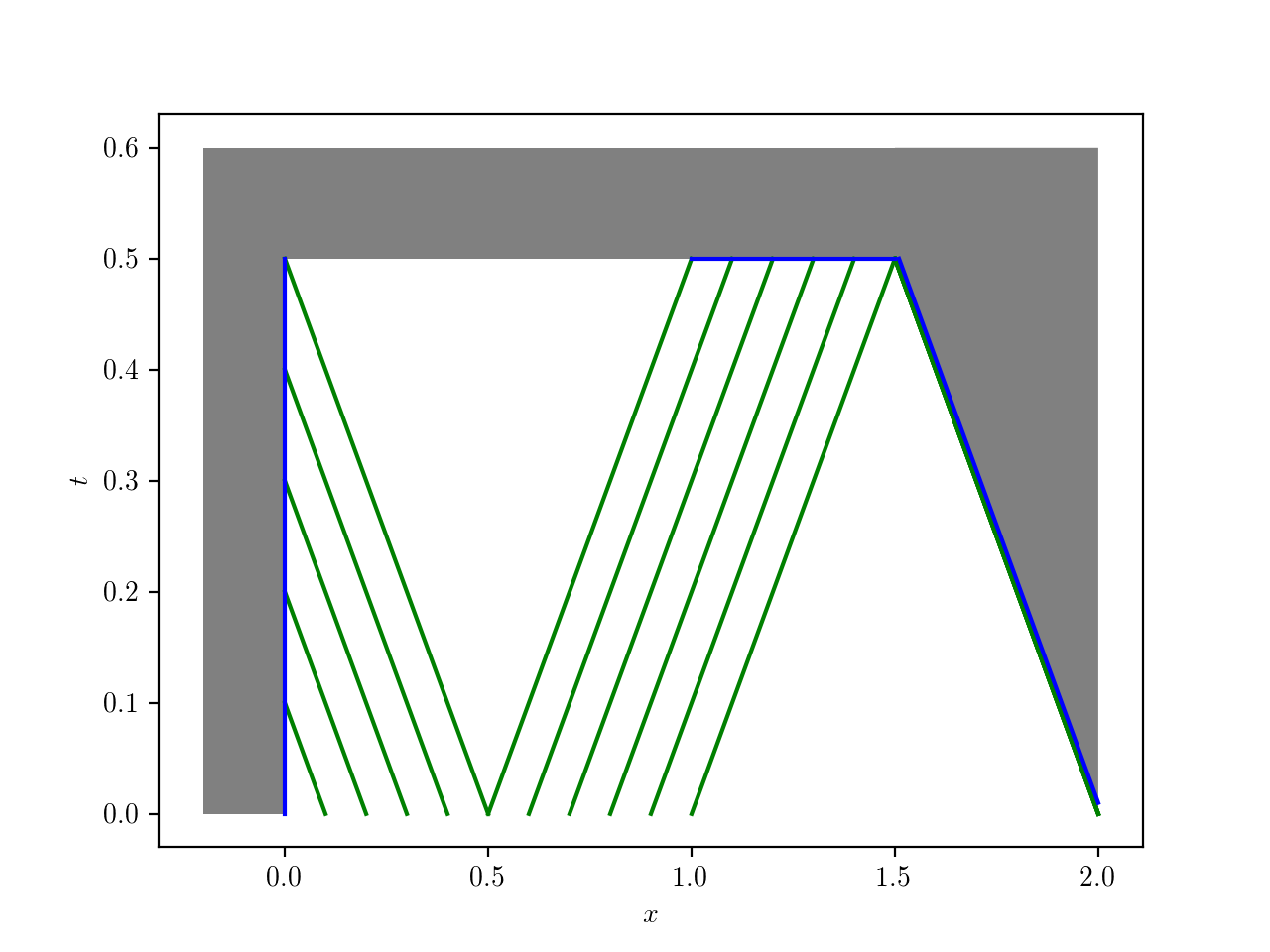}
      \end{subfigure}%
      \begin{subfigure}[b]{0.5\textwidth}
    \centering
          \includegraphics[width=\linewidth]{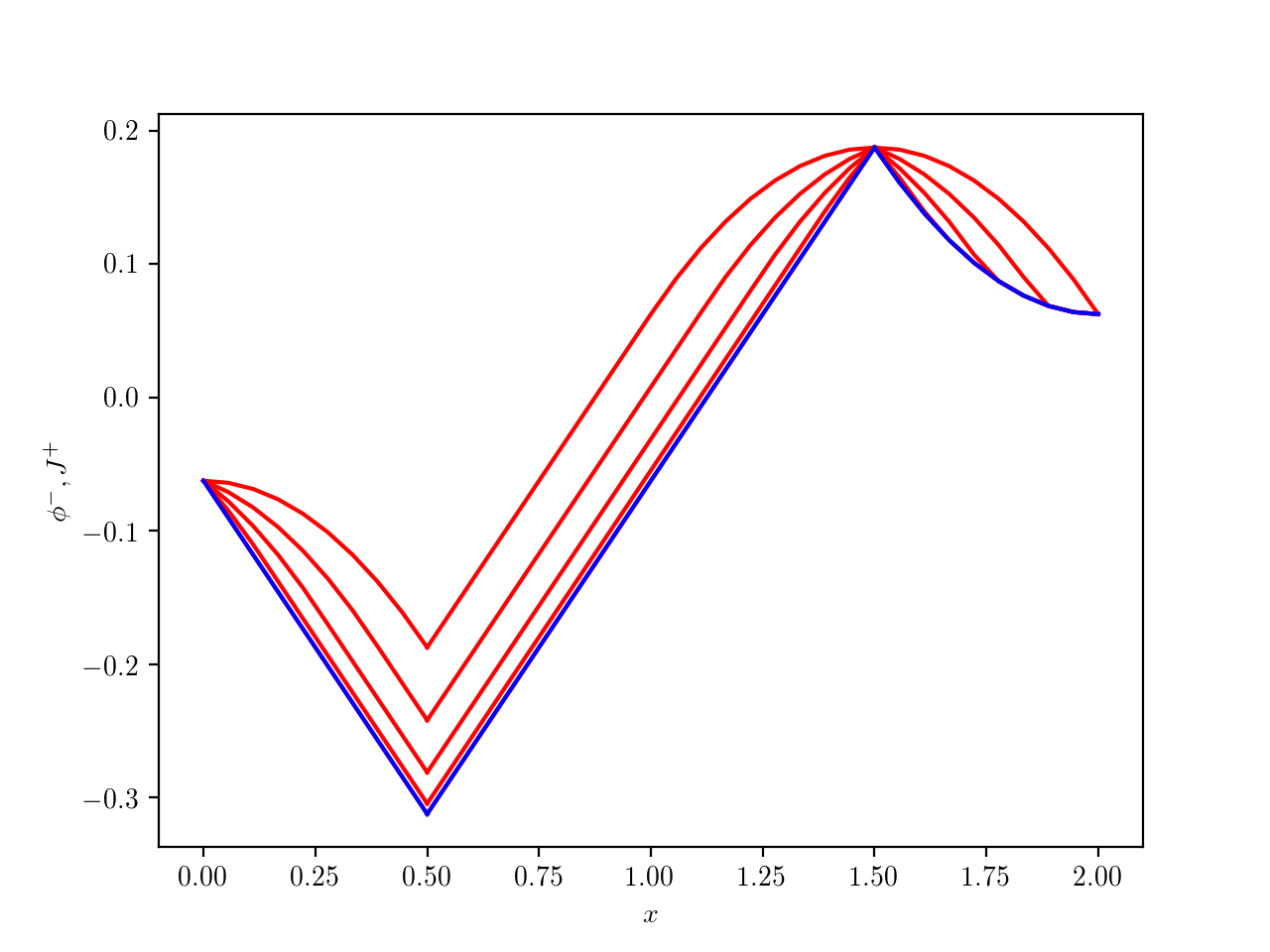}
      \end{subfigure}
      \caption{\label{fig-1} The optimal trajectories are green on the left, which stop upon hitting the free boundary $\tau^{[-1]}$ in blue.  The shaded region shows where $\varphi^-(x)=J^+(t,x)$. On the right $\varphi^-$ is in blue and $J^+(t,\cdot)$ is in red, decreasing as $t\in \{0,0.1,0.2,0.3,0.4\}$. }
   \end{figure}

\noindent \textbf{Strictly decreasing Lagrangian.} We now assume that $g$ is strictly concave so that $t\mapsto L(t,x,u)$ is strictly decreasing. The ansatz for an optimal map is similar, except the interior map reverses orientation,
$$
  \begin{cases}
  T^{ib}(x) = 0, &\ 0\leq x<x_1;\\
  T^{ii}(x) = 2-x, &\ x_1\leq x\leq 1;\\
  T^{bi}(y)=2,&\ 2-x_1< y\leq 2.
  \end{cases}
$$
We find the free boundary to be
$$
  \tau^{[-1]}(y)=\begin{cases}
    2y-2 \,\,& 1 \leq y\leq 2-x_1;\\
    2-y \,\,&\ 2-x_1\leq y\leq 2;
    \end{cases}
$$
and $\varphi^-$ is solved on $[1,2]$ to be
$$
  \varphi^-(y) = \begin{cases} -p^--g(x_1)+g(2-2x_1) +0.5 \,g(2y-2)-0.5 \,g(2-2x_1), &\ 1\leq y\leq 2-x_1;\\
  -p^--2g(x_1)+g(2-2x_1) +g(2-y), &\ 2-x_1< y\leq 2.
  \end{cases}
$$
Similarly, we find optimality occurs when
$$
  \varphi^-(2)=p^+.
$$
In Figure \ref{fig-2}, we plot the solutions for $g(t)=1-e^{-t}$, and $p^-=0$, $p^+\approx -0.15$ (so that $x_1=0.5$).
\begin{figure}[ht]
    \centering
    \begin{subfigure}[b]{0.5\textwidth}
    \centering
          \includegraphics[width=\linewidth]{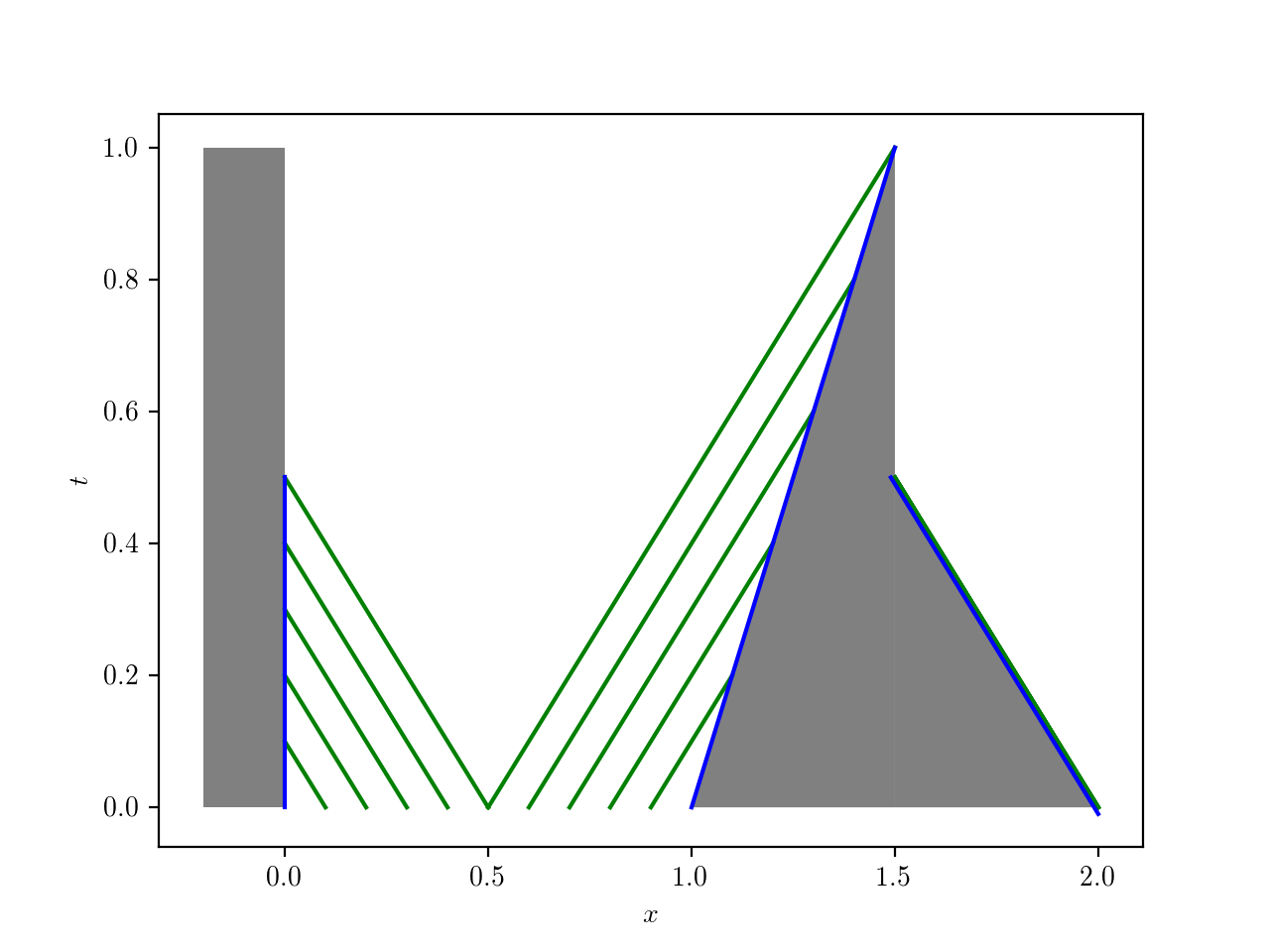}
      \end{subfigure}%
      \begin{subfigure}[b]{0.5\textwidth}
    \centering
          \includegraphics[width=\linewidth]{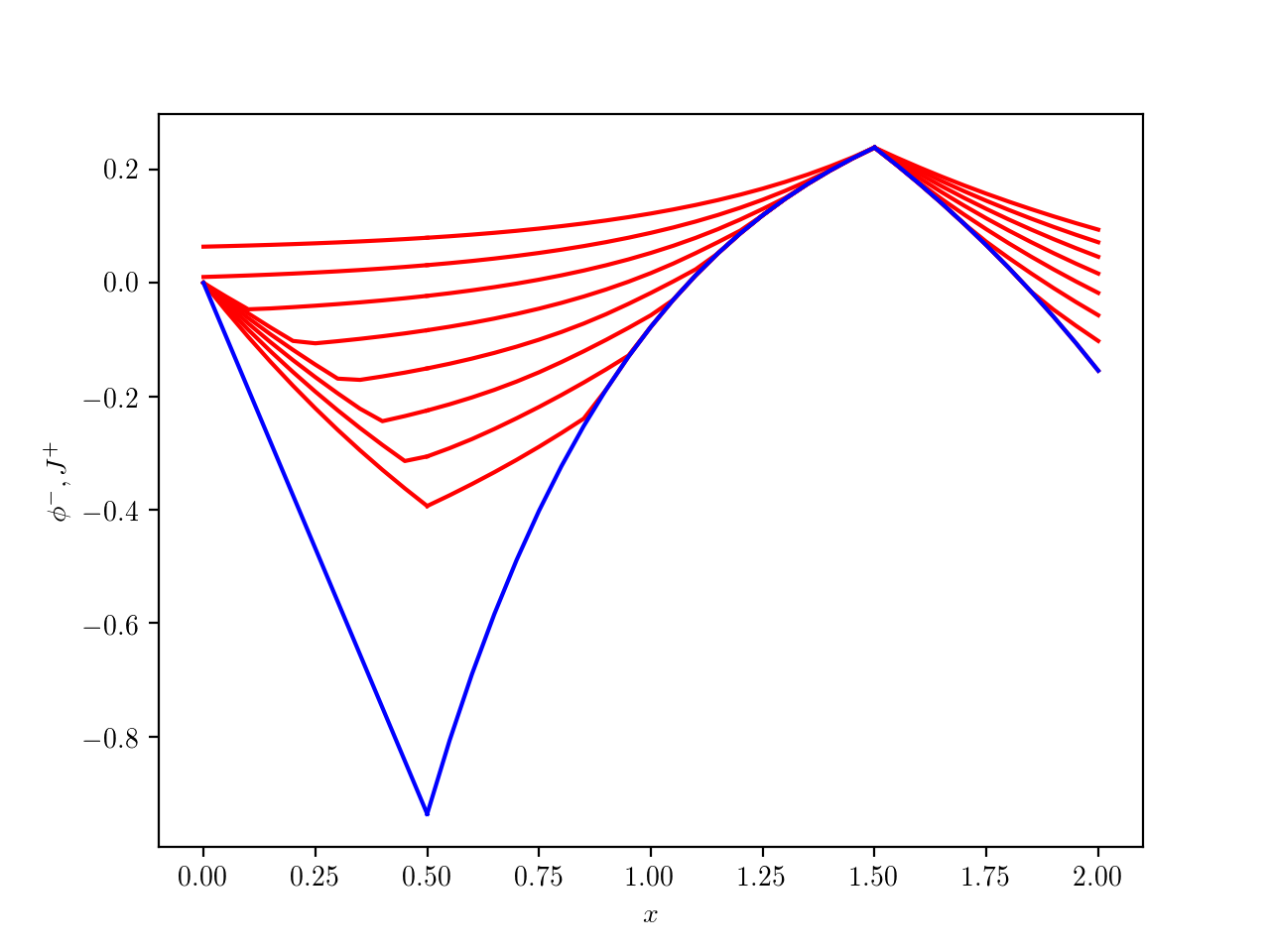}
      \end{subfigure}
      \caption{\label{fig-2} Note that in this case the green optimal trajectories hit the blue free boundary (which is discontinuous) from above. The shaded region again shows where $\varphi^-(x)=J^+(t,x)$. On the right $\varphi^-$ is in blue and $J^+(t,\cdot)$ is in red, increasing for $t$ ranging from $0$ to $\frac{7}{8}$. }
   \end{figure}

\end{document}